\crefname{hypothesis}{Hypothesis}{Hypotheses}
\Crefname{ALC@unique}{Line}{Lines}
\newcommand{\real}{{\mathbb{R}}}
\newcommand{\realpositive}{{\mathbb{R}}_{>0}}
\newcommand{\realnonnegative}{{\mathbb{R}}_{\ge 0}}
\newcommand{\Sc}{\mathcal{S}}
\newcommand{\longthmtitle}[1]{\mbox{}{\bf \textit{(#1).}}}
\newcommand{\subscr}[2]{#1_{\text{#2}}}
\newcommand{\supscr}[2]{#1^{\textup{#2}}}
\newcommand\oprocendsymbol{\hbox{$\bullet$}}
\newcommand\oprocend{\relax\ifmmode\else\unskip\hfill\fi\oprocendsymbol}
\numberwithin{theorem}{section}
\title{Distributed Control for Spatial Self-Organization of Multi-Agent Swarms%
 			\thanks{This work has been partially supported by grant FA9550-18-1-0158.}}
\author{Vishaal Krishnan and Sonia Mart\'inez %
			  \thanks{The authors are with the Department of Mechanical and Aerospace Engineering, University of California at San Diego, La Jolla CA 92093 USA (email: v6krishn@ucsd.edu; soniamd@ucsd.edu).}}
\begin{document}

\maketitle

\begin{keywords}
	Self-organization, Distributed control, Pseudo-localization, Harmonic maps
\end{keywords}

\begin{AMS}
  34B45, 35B40, 35B35, 58J32, 58J35, 58E20 
\end{AMS}

\begin{abstract}
  In this work, we design distributed control laws for spatial
  self-organization of multi-agent swarms in 1D and 2D spatial
  domains. The objective is to achieve a desired density distribution
  over a simply-connected spatial domain. Since individual agents in a
  swarm are not themselves of interest and we are concerned only with
  the macroscopic objective, we view the network of agents in the
  swarm as a discrete approximation of a continuous medium and design
  control laws to shape the density distribution of the continuous
  medium. The key feature of this work is that the agents in the swarm
  do not have access to position information. Each individual agent is
  capable of measuring the current local density of agents and can
  communicate with its spatial neighbors. The network of agents
  implement a Laplacian-based distributed algorithm, which we call
  pseudo-localization, to localize themselves in a new coordinate
  frame, and a distributed control law to converge to the desired
  spatial density distribution. We start by studying self-organization
  in one-dimension, which is then followed by the two-dimensional
  case.
\end{abstract}

\section{Introduction}
Self-organization in swarms refers broadly to the emergence of
patterns of long-range order in large groups of dynamic agents
which interact locally with each other.  It is a
pervasive phenomenon in nature, observed in biological~\cite{SC:03}
and other natural systems~\cite{MGW-BG:02}. 
In the context of robotic systems, problems of 
deployment and formation control of groups of robots have been extensively 
studied~\cite{FB-JC-SM:09, MM-ME:10,JC-SM-TK-FB:02j, MS-DR-JJS:09, AH-MJM-GSS:02}.
More recently, research efforts have been undertaken to massively increase the scale of these robotic 
systems~\cite{MR-CA-NH-AC-RN:14}.  
This transition does not
merely involve an increase in the size of robotic networks, but it
also introduces new theoretical challenges for their analysis and
control design. In particular, large groups of agents have some
essential characteristics that distinguish them from other
smaller-scale counterparts. In a swarm, individual agents have no
significance and only the macroscopic objectives are relevant. A swarm
largely remains unaffected by the removal of a large, but discrete,
number of agents. Moreover, it is
difficult (and needlessly complicated) to specify the global
configuration of the swarm using the states of individual agents;
instead, employing macroscopic quantities such as the swarm spatial
density distribution to specify its configuration is more appropriate.
From an analysis and control-theoretic viewpoint, the dynamic modeling
of swarms is less explored, which e.g.~can be established by means of
PDEs, for which control theoretic tools are less well developed in
comparison to ODEs. These theoretical challenges motivate the
investigation of self-organization in large-scale swarms.

In the literature, Markov-chain based methods have been widely used in
addressing some of the key theoretical problems pertaining to swarm
self-organization.  By means of it, the swarm configuration is
described through the partitioning the spatial domain in a finite
number of larger size disjoint subregions, on which a probability
distribution is defined. Then, the self-organization problem is
reduced to the design of the transition matrix governing the evolution
of this probability density function to ensure its convergence to a
desired profile. A recent approach to density control using Markov
chains is presented in~\cite{ND-UE-BA:15}, which includes additional
conflict-avoidance constraints. In this setting every agent is able to
determine the bin to which it belongs at every instant of time, which
essentially means that individual agents have self-localization
capabilities. Also, the dimensional transition matrix is synthesized
in a central way at every instant of time by solving a convex
optimization problem.  In~\cite{SB-SJC-FYH:13}, the authors make use
of inhomogeneous Markov chains to minimize the number of transitions
to achieve a swarm formation. In this approach, the algorithm
necessitates the estimation of the current swarm distribution, and
computes the transition Markov matrices for each agent, at each
instant of time. The fact that every agent needs to have an estimate
of the global state (swarm distribution) at every time may not be
desirable or feasible. The localization of each agent still remains to
be a main assumption. Under similar conditions, one can find the
manuscripts~\cite{BA-DSB:15} and~\cite{IC-AR:09}, which describe
probabilistic swarm guidance algorithms. In~\cite{SB-AH-MAH-VK:09},
the authors present an approach to task allocation for a homogeneous
swarm of robots. This is a Markov-chain based approach, where the goal
is to converge to the desired population distribution over the set of
tasks.

In the context of robotic swarms, programmable self-assembly of
two-dimensional shapes with a thousand-robot swarm is demonstrated
in~\cite{MR-AC-RN:14}. These robots are capable of measuring distances
to nearby neighbors which they use to localize themselves relative to
other localized robots. Each robot then uses its position to implement
an edge-following algorithm.

Another approach uses partial differential equations to model swarm
behaviour, and control action is applied along the boundary of the
swarm. Previous works on PDE-based methods with boundary control
include~\cite{PF-MK:11}, where the authors present an algorithm for
the deployment of agents onto families of planar curves. Here, the
swarm collective dynamics are modeled by the
reaction-advection-diffusion PDE and the particular family of curves
to which the swarm is controlled to is parametrized by the continuous
agent identity in the interval of unit length. An extension of this
work to deployment on a family of $2$D surfaces in $3$D space can be
found in~\cite{JQ-RV-MK:15}. The problem of planning and task allocation
is addressed in the framework of advection-diffusion-reaction PDEs in~\cite{KE-SB:15}.
In~\cite{GF-SF-TAW:14} and~\cite{SF-GF-PZ-TAW:16},
the authors present an optimal control problem formulation for
swarm systems, where microscopic control laws are derived from the
optimal macroscopic description using a potential function approach.

The problem of position-free extremum-seeking of an external scalar
signal using a swarm of autonomous vehicles, inspired by bacterial
chemotaxis, has been studied in~\cite{AM-JH-KA:08}.

In this work, we adopt a viewpoint outlined in~\cite{JB-JB-JM:10},
wherein we make an amorphous medium abstraction of the swarm, which is
essentially a manifold with an agent located at each point.  We then
model the system using PDEs and design distributed control laws for
them. An important component of this paper is the Laplacian-based
distributed algorithm which we call pseudo-localization algorithm,
which the agents implement to localize themselves in a new coordinate
frame. The convergence properties of the graph Laplacian to the
manifold Laplacian have been studied in~\cite{MB-PN:08}, which find
useful applications in this paper.

The main contribution of this paper is the development of distributed
control laws for the index- and position-free density control of
swarms to achieve general 1D and a large class of 2D density
profiles. In very large swarms with thousands of agents, particularly
those deployed indoors or at smaller scales, presupposing the
availability of position information or pre-assignment of indices to
individual agents would be a strong assumption. In this paper, in
addition to not making the above assumptions, the agents are only
capable of measuring the local density, and in the $2$D case, the
density gradient and the normal direction to the boundary.

Under these assumptions, we present distributed pseudo-localization
algorithms for one and two dimensions that agents implement to compute
their position identifiers. Since every agent occupies a unique
spatial position, we are able to rigorously characterize the resulting
position assignment as a one-to-one correspondence between the set of
spatial coordinates and the set of position identifiers, which
corresponds to a diffeomorphism of the continuum domain. Based on this
assignment, we then design control strategies for self-organization in
one and two dimensions under the assumption that the motion control of
agents is noiseless. The extension to the $2$D case leads to new
difficulties related to the control of the swarm boundaries. To
address these, we implement a variant of the $1$D pseudo-localization
algorithm  at the boundary during an initialization
phase. A preliminary version of this work appeared
in~\cite{VK-SM:16-mtns} where we presented an outline of the
algorithms and stated some of the results. We develop them here
rigorously, providing detailed proofs for our claims.

The paper is organized as follows. In Section~\ref{sec:prelim}, we
introduce the basic notation and preliminary concepts used in the
manuscript. We present the analysis of self-organization in one
dimension in Section~\ref{sec:1D_Self_Organization}, where we
introduce the pseudo-localization algorithm in
Section~\ref{sec:pseudoloc_1D} and the distributed control law in
Section~\ref{subsec:distributed control}. After this, we generalize
and extend the analysis for self-organization in two dimensions in
Section~\ref{sec:self-organization in
  2D}. Section~\ref{sec:simulation} contains numerical simulations of
the results in the paper, and in Section~\ref{sec:conclusions}, we
present our conclusions.
\section{Preliminaries}
\label{sec:prelim}
Let $\real$ denote the set of all real numbers, $\realnonnegative$ the
set of non-negative real numbers, and $\real^n$ the~$n$-dimensional
Euclidean space. We use boldface letters to denote vectors in
$\real^n$.  The norm $| \mathbf{x} |$ of a vector $\mathbf{x} \in
\real^n$ is the standard Euclidean $2$-norm, unless otherwise
specified. Let $\nabla = \left( \frac{\partial}{\partial x_1}, \ldots
  \frac{\partial}{\partial x_n} \right)$ denote the gradient operator
in $\real^n$ when acting on real-valued functions and the Jacobian in
the context of vector-valued functions.  As a shorthand, we let
$\frac{\partial}{\partial z}(\cdot) =
\partial_z (\cdot)$ for a variable~$z$.  Let $\Delta = \sum_{i=1}^n
\frac{\partial^2}{\partial x_i^2}$ be the Laplace operator
in~$\real^n$.  We denote by either $\dot{S}$ or $\frac{dS}{dt}$ the
total time derivative of $S(t)$. Given functions $f,g : \real
\rightarrow \real$, we write $f = \mathcal{O}(g)$ if there exist
positive constants~$C$ and~$c$ such that $|f(h)| \leq C |g(h)|$, for
all $|h| \leq c$. Let $\Sc$ denote the set of agents in the swarm, and
$N$ its cardinality. For the $1$D case, let $l \in \Sc$ denote the
leftmost agent, and $r \in \Sc$ the rightmost one. Let $\mathcal{N}_i$
denote the spatial neighborhood of agent~$i$, which comprises those
agents located inside a small ball centered at $i$. A set-valued
mapping, denoted by $f: \real \rightrightarrows \real^2$, maps the set
of real numbers onto subsets of~$\real^2$.  For a bounded open set
$\Omega \subset \real^n$, $\partial \Omega$~denotes its boundary,
$\bar{\Omega} = \Omega \cup \partial \Omega$ its closure and
$\mathring{\Omega} = \Omega \setminus \partial \Omega$ its interior
with respect to the standard Euclidean topology. The set of smooth
real-valued functions on~$\Omega$ is denoted by $C^{\infty}(\Omega)$.
We let~$\mu$ (or $dx$ in 1D) denote the standard Lebesgue measure;
with a slight abuse of notation, we sometimes omit $d\mu$ (resp.~$dx$
in 1D) from long integrals. The Dirac measure~$\delta$ on~$\Omega$
defined for any~$x \in \Omega$ and any measurable set $D \subseteq
\Omega$ is given by $\delta_x(D) = 1$ for $x \in D$, and $\delta_x(D)
= 0$ for $x \notin D$.

For two non-empty subsets~$M_1$ and~$M_2$ of a metric space $(M,d)$,
the Hausdorff distance $d_H(M_1,M_2)$ between them is defined as:
\begin{align}
  d_H(M_1,M_2) = \max \lbrace \sup_{x \in M_1} \inf_{y \in M_2} d(x,y)
  , \sup_{y \in M_2} \inf_{x \in M_1} d(x,y) \rbrace.
	\label{eq:defn_Hausdorff}
\end{align}
On a measurable space~$U$, let~$L^p(U) =
\lbrace f: U \rightarrow \real \,|\, \|f\|_{L^p(U)} = \left( \int_{U} |f|^p d\mu
\right)^{1/p} < \infty\rbrace$ constitute the~$L^p$ space, where~$\|
\cdot \|_{L^p(U)}$ is the~$L^p$ norm. Of particular interest is
the~$L^2$ space, or the space of square-integrable functions. 
In this paper, we denote by $\| f \|_{L^2(U)}$ the $L^2$ norm of~$f$
with respect to the Lebesgue measure, and by $\| f \|_{L^2(U, \rho)}$ the 
weighted $L^2$ norm (with the strictly positive weight~$\rho$ on~$U$).
 The Sobolev space~$W^{1,p} (U)$ over a measurable space~$U$ is defined
as~$W^{1,p} (U) = \lbrace f: U \rightarrow \real \,|\, \| f \|_{W^{1,p}} =
\left( \int_{U} |f|^p + \int_{U} |\nabla f|^p \right)^{1/p} < \infty
\rbrace$. Of particular interest is the space~$W^{1,2}$, also called
the $H^1$ space. For two functions $f(t, \cdot)$ and $g(\cdot)$, we 
denote by $f \rightarrow_{L^2} g$ the convergence in $L^2$ norm (over the domain~$U$ of the functions) of $f(t,\cdot)$ to
$g(\cdot)$ as $t \rightarrow \infty$, that is, $\lim_{t \rightarrow \infty} \| f(t, \cdot) - g(\cdot) \|_{L^2} = 0$.
Convergence in $H^1$ norm is denoted similarly by $f \rightarrow_{H^1} g$.

We now state some well-known results that we will be used in the
subsequent sections of this paper.
\begin{lemma}\longthmtitle{Divergence Theorem~\cite{AJC-JEM:90}} \label{le:dt}
  For a smooth vector field $\mathbf{F}$ over a bounded open set
  $\Omega \subseteq \real^n$ with boundary $\partial \Omega$, the
  volume integral of the divergence $\nabla \cdot \mathbf{F}$ of
  $\mathbf{F}$ over $\Omega$ is equal to the surface integral of
  $\mathbf{F}$ over $\partial \Omega$:
\begin{align}
  \int_{\Omega} (\nabla \cdot \mathbf{F})~d\mu = \int_{\partial \Omega}
  \mathbf{F} \cdot \mathbf{n}~dS,
	\label{eqn:divergence_thm} 
\end{align}
where $\mathbf{n}$ is the outward normal to the boundary and $dS$ the
measure on the boundary. For a scalar field~$U$ and a vector
field~$\mathbf{F}$ defined over~$\Omega \subseteq \real^n$:
\begin{align*}
  \int_{\Omega} (\mathbf{F} \cdot \nabla U)~d\mu= \int_{\partial
    \Omega} U (\mathbf{F} \cdot \mathbf{n})~dS - \int_{\Omega} U
  (\nabla \cdot \mathbf{F})~d\mu.
\end{align*}
\end{lemma}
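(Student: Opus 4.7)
The plan is to first establish the identity on rectangular domains via Fubini and the one-dimensional fundamental theorem of calculus, extend it to a general bounded smooth domain by a partition-of-unity argument with boundary-straightening charts, and then deduce the integration-by-parts formula by applying the first identity to the vector field $U\mathbf{F}$.

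First I would verify the statement on an axis-aligned box $R = [a_1, b_1] \times \cdots \times [a_n, b_n]$. Writing $\nabla \cdot \mathbf{F} = \sum_i \partial_{x_i} F_i$, Fubini together with the fundamental theorem of calculus in the $x_i$-variable yields
\begin{align*}
\int_R \partial_{x_i} F_i \, d\mu \;=\; \int_{\{x_i = b_i\}} F_i \, dS \;-\; \int_{\{x_i = a_i\}} F_i \, dS.
\end{align*}
Summing over $i$ assembles exactly the flux of $\mathbf{F}$ across $\partial R$ with the correct outward orientation, since on the face $x_i = b_i$ the outward normal is $+\mathbf{e}_i$ and on $x_i = a_i$ it is $-\mathbf{e}_i$.

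Next I would promote this to a general bounded open $\Omega$ with smooth $\partial \Omega$. Take a finite open cover $\{U_\alpha\}$ of $\bar{\Omega}$ such that each chart meeting $\partial \Omega$ admits a diffeomorphism flattening $\partial \Omega$ to a coordinate hyperplane, along with a subordinate smooth partition of unity $\{\phi_\alpha\}$. Decomposing $\mathbf{F} = \sum_\alpha \phi_\alpha \mathbf{F}$ and applying the box case (modified by the change-of-variables Jacobian) in each chart reduces the global identity to local ones. The contributions from the derivatives of $\phi_\alpha$ cancel because $\sum_\alpha \phi_\alpha \equiv 1$ on $\bar{\Omega}$, so only the boundary pieces survive and reassemble into $\int_{\partial \Omega} \mathbf{F} \cdot \mathbf{n} \, dS$. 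For the second identity, I would apply the Leibniz rule $\nabla \cdot (U\mathbf{F}) = U(\nabla \cdot \mathbf{F}) + \mathbf{F} \cdot \nabla U$ and integrate, then invoke the first part on the smooth field $U \mathbf{F}$ to obtain $\int_\Omega \nabla \cdot (U \mathbf{F}) \, d\mu = \int_{\partial \Omega} U (\mathbf{F} \cdot \mathbf{n}) \, dS$, and rearrange.

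The main obstacle is the geometric bookkeeping in the boundary charts: one must confirm that the surface measure $dS$ pulled back through the straightening diffeomorphism matches the induced Hausdorff measure on $\partial \Omega$, and that the image of the coordinate normal coincides (up to orientation) with the outward unit normal $\mathbf{n}$. Once these identifications are in place, the local-to-global assembly is routine. Since this is a classical result of vector calculus, the full technical verification can be deferred to~\cite{AJC-JEM:90}.
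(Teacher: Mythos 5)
The paper offers no proof of this lemma at all: it is stated as a classical preliminary and simply attributed to the cited reference, so there is no argument of the paper's to compare against. Your sketch is the standard textbook proof and is sound: the box case via Fubini and the fundamental theorem of calculus, the local-to-global step via a partition of unity subordinate to boundary-straightening charts (with the key cancellation $\sum_\alpha \nabla\phi_\alpha = 0$ correctly identified), and the integration-by-parts identity obtained by applying the first identity to $U\mathbf{F}$ together with $\nabla\cdot(U\mathbf{F}) = U(\nabla\cdot\mathbf{F}) + \mathbf{F}\cdot\nabla U$. The one point worth making explicit is the regularity needed for the argument to go through: the flattening charts require $\partial\Omega$ to be at least $C^1$ (or Lipschitz, with an almost-everywhere normal), and $\mathbf{F}$ must be continuously differentiable up to $\bar\Omega$, not merely on the open set $\Omega$ --- hypotheses the lemma as stated leaves implicit but which are satisfied everywhere the paper invokes it. Deferring the measure-theoretic bookkeeping in the boundary charts to the reference is entirely appropriate for a result used here as a black box.
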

%
\begin{lemma}\longthmtitle{Leibniz Integral Rule \cite{AJC-JEM:90}}
\label{lemma:Leibniz_rule}
Let $f \in \mathcal{C}^{\infty}(\real \times \real^n)$ and $\Omega :
\real \rightrightarrows \real^n$ be a smooth
one-parameter family of bounded open sets in~$\real^n$ generated by
the flow corresponding to the smooth vector field~$\mathbf{v}$
on~$\real^n$. Then:
\begin{align*}
  \frac{d}{dt} \left( \int_{\Omega(t)} f(t, \mathbf{r})~d\mu \right) =
  \int_{\Omega(t)} \partial_t( f(t, \mathbf{r}))~d\mu + \int_{\partial
    \Omega(t)} f(t, \mathbf{r}) \mathbf{v} \cdot \mathbf{n}~dS.
\end{align*} 
\end{lemma}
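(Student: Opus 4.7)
The plan is to reduce the problem to differentiation under the integral sign over a \emph{fixed} reference domain, by pulling back the integrand along the flow generated by $\mathbf{v}$, and then to convert the resulting bulk expression into the desired boundary term via the Divergence Theorem (\Cref{le:dt}).

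More precisely, let $\Phi_t : \Omega(0) \to \Omega(t)$ denote the flow map of $\mathbf{v}$, so that $\Omega(t) = \Phi_t(\Omega(0))$ and $\partial_t \Phi_t(\mathbf{r}_0) = \mathbf{v}(\Phi_t(\mathbf{r}_0))$. Since $\mathbf{v}$ is smooth, $\Phi_t$ is a smooth diffeomorphism onto its image for each $t$, and the change of variables $\mathbf{r} = \Phi_t(\mathbf{r}_0)$ gives
\begin{align*}
  \int_{\Omega(t)} f(t,\mathbf{r})\,d\mu = \int_{\Omega(0)} f(t, \Phi_t(\mathbf{r}_0))\, J_t(\mathbf{r}_0)\, d\mu_0,
\end{align*}
where $J_t(\mathbf{r}_0) = \det(\nabla_{\mathbf{r}_0}\Phi_t)$. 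Since $\Omega(0)$ is bounded and everything in sight is smooth, I can differentiate under the integral on the right-hand side, which yields a sum of three terms coming from $\partial_t f$, the chain rule applied to $f(t,\Phi_t(\mathbf{r}_0))$ (producing $\nabla f \cdot \mathbf{v}$), and $\partial_t J_t$.

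The key analytic ingredient at this stage is Liouville's formula $\partial_t J_t = (\nabla \cdot \mathbf{v})(\Phi_t(\mathbf{r}_0))\, J_t$, which I would justify by differentiating $\det(\nabla \Phi_t)$ using Jacobi's formula together with the identity $\partial_t (\nabla \Phi_t) = (\nabla \mathbf{v})|_{\Phi_t} \cdot \nabla \Phi_t$. Substituting this in and changing variables back from $\Omega(0)$ to $\Omega(t)$, the expression collapses to
\begin{align*}
  \frac{d}{dt}\int_{\Omega(t)} f\, d\mu = \int_{\Omega(t)} \bigl[\partial_t f + \nabla f \cdot \mathbf{v} + f\,(\nabla \cdot \mathbf{v})\bigr] d\mu = \int_{\Omega(t)} \partial_t f\, d\mu + \int_{\Omega(t)} \nabla \cdot (f\,\mathbf{v})\, d\mu.
\end{align*}
Applying \Cref{le:dt} to the second integral produces the boundary term $\int_{\partial\Omega(t)} f\,\mathbf{v} \cdot \mathbf{n}\, dS$, completing the proof.

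The main obstacle is the Liouville identity for $\partial_t J_t$, since the rest is essentially bookkeeping. In the smooth, bounded setting assumed here, however, all derivatives and changes of variables are uniformly controlled on compact sets, so no additional integrability hypothesis is needed and the interchange of differentiation and integration is routine.
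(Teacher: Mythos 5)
The paper does not prove this lemma: it is quoted as a standard result (the Reynolds transport theorem) with a citation, so there is no in-paper argument to compare against. Your proof is the classical one and it is correct: pull back to the fixed reference domain $\Omega(0)$ via the flow map $\Phi_t$, differentiate under the integral sign, invoke Liouville's formula $\partial_t J_t = (\nabla\cdot\mathbf{v})\circ\Phi_t\, J_t$ (correctly justified via Jacobi's formula and the variational equation $\partial_t(\nabla\Phi_t) = (\nabla\mathbf{v})|_{\Phi_t}\,\nabla\Phi_t$), recombine $\nabla f\cdot\mathbf{v} + f\,\nabla\cdot\mathbf{v} = \nabla\cdot(f\mathbf{v})$, and apply the Divergence Theorem (Lemma~\ref{le:dt}). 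Two small points worth making explicit: first, the change of variables uses $J_t$ rather than $|J_t|$, which is legitimate because $J_0 = 1$ and $J_t$ never vanishes along a smooth flow (indeed Liouville's formula gives $J_t = \exp\bigl(\int_0^t (\nabla\cdot\mathbf{v})\circ\Phi_s\, ds\bigr) > 0$); second, the final application of the Divergence Theorem tacitly requires $\partial\Omega(t)$ to be regular enough, which holds here since $\Omega(t)$ is the diffeomorphic image of $\Omega(0)$ under the smooth flow and the family is assumed smooth. With those remarks your argument is complete and is exactly the proof the cited reference would supply.
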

\begin{corollary}\longthmtitle{Derivative of Energy Functional}
\label{lemma:time_der}
Let $U$ be an energy functional defined as follows:
\begin{align*}
	U = \frac{1}{2} \int_{\Omega} |f|^2~d\mu,
\end{align*}
for some function $f: \Omega \rightarrow \real$.  Then,
\begin{align*}
  \dot{U} = \int_{\Omega} f \cdot \left(\frac{df}{dt}
  \right)~d\mu + \frac{1}{2} \int_{\Omega} |f|^2 \nabla \cdot
  \mathbf{v}~d\mu.
\end{align*}
where $\frac{d}{dt} = \partial_t + \mathbf{v} \cdot \nabla$ is the
total derivative.
\begin{proof}
  We have included the proof for this corollary for the sake of
  completeness. Using the Leibniz integral rule and the Divergence
  theorem, we have (it is understood that the integrations are with
  respect to the measure~$\mu$):
\begin{align*}
  \frac{\partial U}{\partial t} &= \int_{\Omega} f \cdot f_t +
  \frac{1}{2} \int_{\partial \Omega} |f|^2
  \mathbf{v} \cdot \mathbf{n} \\
  &= \int_{\Omega} f \cdot f_t + \frac{1}{2} \int_{\Omega} \nabla \cdot (|f|^2 \mathbf{v}) \\
  &= \int_{\Omega} f \cdot f_t + \int_{\Omega} f \cdot (\mathbf{v}
  \cdot \nabla)
  f  + \frac{1}{2} \int_{\Omega} |f|^2 \nabla \cdot \mathbf{v} \\
  &= \int_{\Omega} f \cdot ( f_t + (\mathbf{v} \cdot \nabla) f ) +
  \frac{1}{2} \int_{\Omega} |f|^2 \nabla \cdot \mathbf{v} \\
  &= \int_{\Omega} f \cdot \left(\frac{df}{dt} \right) + \frac{1}{2}
  \int_{\Omega} |f|^2 \nabla \cdot \mathbf{v}.
\end{align*}
\end{proof}
\end{corollary}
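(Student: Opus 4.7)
The plan is to combine the Leibniz integral rule (Lemma~\ref{lemma:Leibniz_rule}) with the divergence theorem (Lemma~\ref{le:dt}) and then use the product rule for divergence to exhibit the material derivative $\frac{df}{dt} = \partial_t f + \mathbf{v} \cdot \nabla f$ explicitly in the resulting expression.

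First, I would apply Lemma~\ref{lemma:Leibniz_rule} to $U(t) = \frac{1}{2}\int_{\Omega(t)} |f|^2\, d\mu$ with integrand $|f|^2$, yielding
\begin{align*}
\dot{U} = \frac{1}{2}\int_{\Omega} \partial_t (|f|^2)\, d\mu + \frac{1}{2} \int_{\partial \Omega} |f|^2\, \mathbf{v} \cdot \mathbf{n}\, dS.
\end{align*}
Since $\partial_t(|f|^2) = 2 f \partial_t f$, the first volume integral already takes the form $\int_\Omega f \cdot f_t\, d\mu$, which is part of the material-derivative term we are aiming for.

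Next, I would convert the boundary term into a volume integral by applying Lemma~\ref{le:dt} to the vector field $\mathbf{F} = |f|^2 \mathbf{v}$, obtaining $\int_{\partial \Omega} |f|^2 \mathbf{v} \cdot \mathbf{n}\, dS = \int_\Omega \nabla \cdot (|f|^2 \mathbf{v})\, d\mu$. Then I would expand the divergence via the product rule as $\nabla \cdot (|f|^2 \mathbf{v}) = 2 f\, (\mathbf{v} \cdot \nabla) f + |f|^2 \nabla \cdot \mathbf{v}$. Substituting this back and combining with the volume integral from the previous step gives $\dot{U} = \int_\Omega f \cdot \left( f_t + (\mathbf{v} \cdot \nabla) f \right) d\mu + \frac{1}{2} \int_\Omega |f|^2 \nabla \cdot \mathbf{v}\, d\mu$, and recognizing the parenthesized term as $\frac{df}{dt}$ closes the argument.

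I do not anticipate any real obstacle: the corollary is essentially a bookkeeping exercise that chains together the two preceding lemmas. The only point requiring a little care is ensuring the product rule is applied correctly to the scalar $|f|^2$ against the vector field $\mathbf{v}$, so that the $\mathbf{v} \cdot \nabla$ contribution combines cleanly with $\partial_t$ to produce the material derivative while leaving a leftover $\frac{1}{2}|f|^2 \nabla \cdot \mathbf{v}$ factor of the right sign.
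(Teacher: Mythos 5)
Your proposal is correct and follows exactly the same route as the paper's own proof: Leibniz integral rule to produce the volume term $\int_\Omega f\cdot f_t$ plus the boundary flux, divergence theorem to convert the flux to $\int_\Omega \nabla\cdot(|f|^2\mathbf{v})$, and the product rule to split that into the convective part of the material derivative and the leftover $\tfrac{1}{2}\int_\Omega |f|^2\,\nabla\cdot\mathbf{v}$ term. No gaps.
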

\begin{lemma}\longthmtitle{Poincar\'{e}-Wirtinger Inequality~\cite{GL:09}}
\label{lemma:poincare_wirtinger}
  For $p \in [1, \infty]$ and $\Omega$, a bounded connected open subset
  of $\real^n$ with a Lipschitz boundary, there exists a constant $C$
  depending only on $\Omega$ and $p$ such that for every function $u$
  in the Sobolev space $W^{1,p}(\Omega)$:
\begin{align*}
  \|u-u_{\Omega}\|_{L^p(\Omega)} \leq C \|\nabla u\|_{L^p(\Omega)},
\end{align*}
where $u_{\Omega} = \frac{1}{|\Omega|} \int_{\Omega} u d\mu$, and
$|\Omega|$ is the Lebesgue measure of $\Omega$.
\end{lemma}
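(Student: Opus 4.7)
The plan is to argue by contradiction via a compactness argument. If the conclusion fails, then for each $k \in \mathbb{N}$ I can find $u_k \in W^{1,p}(\Omega)$ with $\|u_k - (u_k)_{\Omega}\|_{L^p(\Omega)} > k \|\nabla u_k\|_{L^p(\Omega)}$. Rescaling, set
\[
v_k = \frac{u_k - (u_k)_{\Omega}}{\|u_k - (u_k)_{\Omega}\|_{L^p(\Omega)}},
\]
so that $(v_k)_{\Omega} = 0$, $\|v_k\|_{L^p(\Omega)} = 1$, and $\|\nabla v_k\|_{L^p(\Omega)} < 1/k$. The sequence $\{v_k\}$ is therefore bounded in $W^{1,p}(\Omega)$.

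Next, I would invoke the Rellich--Kondrachov compact embedding theorem, which for a bounded domain with Lipschitz boundary yields that the inclusion $W^{1,p}(\Omega) \hookrightarrow L^p(\Omega)$ is compact. Extracting a subsequence (not relabelled), $v_k \to v$ strongly in $L^p(\Omega)$ for some $v \in L^p(\Omega)$. Since the mean $u \mapsto u_{\Omega}$ is a continuous linear functional on $L^p(\Omega)$, passing to the limit gives $v_{\Omega} = 0$ and $\|v\|_{L^p(\Omega)} = 1$.

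The third step identifies the weak gradient of the limit. Since $\|\nabla v_k\|_{L^p(\Omega)} \to 0$, for any test field $\boldsymbol{\varphi} \in C_c^\infty(\Omega; \real^n)$, integration by parts gives
\[
\int_{\Omega} v \, (\nabla \cdot \boldsymbol{\varphi}) \, d\mu = \lim_{k \to \infty} \int_{\Omega} v_k \, (\nabla \cdot \boldsymbol{\varphi}) \, d\mu = -\lim_{k \to \infty} \int_{\Omega} \nabla v_k \cdot \boldsymbol{\varphi} \, d\mu = 0,
\]
so $\nabla v = 0$ in the distributional sense and in particular $v \in W^{1,p}(\Omega)$. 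Connectedness of $\Omega$ then forces $v$ to be constant a.e., and the condition $v_{\Omega} = 0$ gives $v \equiv 0$, contradicting $\|v\|_{L^p(\Omega)} = 1$.

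The main obstacle, and the place where all the hypotheses are used, is the Rellich--Kondrachov step: the Lipschitz regularity of $\partial \Omega$ is exactly what guarantees the existence of a bounded extension operator $W^{1,p}(\Omega) \to W^{1,p}(\real^n)$ and hence the compactness of the embedding for $p \in [1, \infty)$. For $p = \infty$ one can either argue separately using the Ascoli--Arzelà theorem applied to the uniformly Lipschitz sequence $\{v_k\}$, or reduce to the finite-$p$ case via the continuous embedding $L^\infty(\Omega) \hookrightarrow L^p(\Omega)$ on bounded domains. The dependence of the constant $C$ on $\Omega$ and $p$ is inherent in the compactness step, as no explicit quantitative information is extracted.
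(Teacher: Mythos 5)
The paper does not prove this lemma; it is quoted as a standard result with a citation to the literature, so there is no in-paper argument to compare against. Your proof is the classical compactness-and-contradiction argument and it is correct: the normalization, the application of Rellich--Kondrachov to extract a strongly $L^p$-convergent subsequence, the identification of the limit's weak gradient as zero by testing against $C_c^\infty$ fields, and the use of connectedness to force the limit to be a constant of zero mean are all sound, and the argument is consistent with the toolbox the paper itself assembles (its Lemma on the Rellich--Kondrachov theorem). One small caveat on your closing remark about $p=\infty$: the proposed reduction ``via the continuous embedding $L^\infty(\Omega) \hookrightarrow L^p(\Omega)$'' does not work as stated, since that embedding only controls the $L^p$ norm of $u-u_\Omega$ by its $L^\infty$ norm and not conversely, so it cannot upgrade a finite-$p$ bound to the $L^\infty$ bound required on the left-hand side; your alternative via Arzel\`a--Ascoli (or, more directly, the observation that a $W^{1,\infty}$ function on a bounded Lipschitz domain is Lipschitz with respect to the intrinsic metric, whence $|u(x)-u_\Omega| \leq C\|\nabla u\|_{L^\infty(\Omega)}$ pointwise) is the right way to handle that endpoint. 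Also note that the Rellich--Kondrachov statement as given in the paper is restricted to $1 \leq p < n$, so for $n \leq p < \infty$ you implicitly rely on the stronger standard version for bounded Lipschitz domains, which your extension-operator remark correctly identifies as the essential ingredient.
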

\begin{lemma}\longthmtitle{Rellich-Kondrachov Compactness
    Theorem~\cite{LCE:98}}
\label{lemma:RKCT}
Let $U \subset \real^n$ be open, bounded and such that~$\partial U$
is~$C^1$.  Suppose~$1 \leq p < n$, then~$W^{1,p} (U)$ is compactly
embedded in $L^q(U)$ for each~$1 \leq q < \frac{pn}{n - p}$. In particular,
we have~$W^{1,p}(U)$ is compactly contained in $L^p(U)$.
\end{lemma}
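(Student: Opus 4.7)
The plan is to establish compactness of the embedding $W^{1,p}(U) \hookrightarrow L^q(U)$ by the classical Friedrichs-type route: extend functions from $U$ to all of $\real^n$, mollify them, control both the mollified family and the mollification error uniformly in the sequence, and close with an Arzel\`a--Ascoli plus diagonal argument (see, e.g.,~\cite{LCE:98}).

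First, using the $C^1$-regularity of $\partial U$, I would invoke the standard Sobolev extension theorem to obtain a bounded linear operator $E : W^{1,p}(U) \to W^{1,p}(\real^n)$ whose image is supported in a fixed bounded open set $V$ containing $\bar U$. Given a bounded sequence $\{u_m\}$ in $W^{1,p}(U)$, the extensions $\tilde u_m = E u_m$ are bounded in $W^{1,p}(\real^n)$ and supported in $\bar V$. By the Gagliardo--Nirenberg--Sobolev inequality, the sequence is in addition bounded in $L^{p^*}(V)$ with $p^* = pn/(n-p)$, and hence in every intermediate space $L^s(V)$ for $s \in [1, p^*]$.

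Next, let $\eta_\epsilon$ be a standard mollifier and set $u_m^\epsilon = \eta_\epsilon * \tilde u_m$. Two estimates drive the argument. The \emph{regularity} estimate says that for each fixed $\epsilon > 0$, the family $\{u_m^\epsilon\}$ lies in $C^\infty$ with both $\|u_m^\epsilon\|_{L^\infty}$ and $\|\nabla u_m^\epsilon\|_{L^\infty}$ uniformly bounded in $m$; Arzel\`a--Ascoli then yields a subsequence converging uniformly on $\bar V$, and hence in $L^q(U)$. The \emph{mollification-error} estimate exploits the $L^p$ bound on $\nabla \tilde u_m$: writing $u_m^\epsilon(x) - \tilde u_m(x)$ as a gradient integral along segments of length at most $\epsilon$ yields $\|u_m^\epsilon - \tilde u_m\|_{L^1(V)} = O(\epsilon)$ uniformly in $m$, and interpolating between $L^1$ and $L^{p^*}$ gives $\|u_m^\epsilon - \tilde u_m\|_{L^q(V)} = O(\epsilon^\theta)$ for some $\theta = \theta(q) > 0$, precisely when $q < p^*$.

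A standard diagonal extraction then closes the argument: for each $\epsilon_k = 1/k$, the Arzel\`a--Ascoli step delivers a subsequence along which $u_m^{\epsilon_k}$ converges in $L^q(U)$; combining these with the uniform-in-$m$ smallness of $\|u_m^{\epsilon_k} - \tilde u_m\|_{L^q(V)}$ in $k$, a $3\epsilon$-argument shows that the resulting diagonal subsequence of $\{u_m\}$ is Cauchy in $L^q(U)$, hence convergent. The final assertion of the lemma is then immediate since $p < p^* = pn/(n-p)$. The main technical obstacle is the uniform-in-$m$ mollification-error bound: it is there that the strict inequality $q < pn/(n-p)$ enters through the interpolation step, which degenerates in the endpoint case $q = p^*$.
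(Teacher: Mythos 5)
This lemma is not proved in the paper at all: it is quoted verbatim as a classical result with a citation to~\cite{LCE:98}, so there is no in-paper argument to compare against. Your sketch is a correct outline of the standard proof from that very reference (extension via the $C^1$ boundary, Gagliardo--Nirenberg--Sobolev to get the $L^{p^*}$ bound, mollification with Arzel\`a--Ascoli for each fixed $\epsilon$, the uniform $O(\epsilon)$ mollification error in $L^1$ interpolated against $L^{p^*}$ to exploit $q < pn/(n-p)$, and a diagonal extraction), and it correctly identifies where the strict subcriticality of $q$ is used.
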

\begin{lemma}\longthmtitle{LaSalle Invariance Principle~\cite{DH:81, JAW:13, JAW:78}}
\label{lemma:LaSalle_inv}
Let~$\lbrace \mathcal{P}(t)\,|\, t \in \realnonnegative \rbrace$ be a
continuous semigroup of operators on a Banach space~$U$ (closed subset of a
Banach space with norm~$\| \cdot \|$), and for any~$u \in U$, define
the positive orbit starting from~$u$ at~$t=0$ as $\Gamma_{+}(u) =
\lbrace \mathcal{P}(t)u  \,|\, t \in \realnonnegative \rbrace
\subseteq U$.
 Let~$V: U \rightarrow \real$ be a continuous Lyapunov
functional on~$\mathcal{G} \subset U$ for~$\mathcal{P}$ (such that~$\dot{V}(u) = \frac{d}{dt} V(\mathcal{P}(t)u) \leq 0$ in~$\mathcal{G}$).  Define~$E =
\lbrace u \in \bar{\mathcal{G}} \,|\, \dot{V}(u) = 0 \rbrace$, and let~$\tilde{E}$ be the
largest invariant subset of~$E$. If for~$u_0 \in \mathcal{G}$, the
orbit~$\Gamma_{+}(u_0)$ is pre-compact (lies in a compact subset
of~$U$), then~$\lim_{t \rightarrow +\infty} d_U(\mathcal{P}(t)u_0,
\tilde{E}) = 0$, where~$d_U(y,\tilde{E}) = \inf_{x \in \tilde{E}} \| y - x \|_U$ 
(where~$d_U$ is the distance in~$U$).
\end{lemma}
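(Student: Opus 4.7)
The plan is to follow the classical $\omega$-limit set route, adapted to the Banach-space semigroup setting. I would first define
$\omega(u_0) = \bigcap_{s \geq 0} \overline{\lbrace \mathcal{P}(t) u_0 \,|\, t \geq s \rbrace}$.
Precompactness of $\Gamma_{+}(u_0)$ yields by a standard sequential compactness argument that $\omega(u_0)$ is nonempty and compact in $U$, and that $d_U(\mathcal{P}(t) u_0, \omega(u_0)) \to 0$ as $t \to \infty$: if the distance failed to vanish one could extract a subsequence $\mathcal{P}(t_n) u_0$ staying bounded away from $\omega(u_0)$, then pass to a convergent sub-subsequence, contradicting the definition of $\omega(u_0)$.

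Next I would verify that $\omega(u_0)$ is forward invariant under $\mathcal{P}$ and sits inside $\bar{\mathcal{G}}$. Invariance is the usual diagonal argument: if $y = \lim_{n} \mathcal{P}(t_n) u_0$, strong continuity of $\mathcal{P}(\tau)$ gives $\mathcal{P}(\tau) y = \lim_{n} \mathcal{P}(t_n + \tau) u_0 \in \omega(u_0)$. The inclusion $\omega(u_0) \subseteq \bar{\mathcal{G}}$ follows from the orbit staying in $\mathcal{G}$, which is implicit in the way the Lyapunov hypothesis is posed (typically ensured in practice by taking $\mathcal{G}$ to be a sublevel set of $V$).

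The analytic heart of the argument is that $V$ is constant on $\omega(u_0)$. Since $t \mapsto V(\mathcal{P}(t) u_0)$ is non-increasing by the Lyapunov hypothesis and $V$ is continuous on the compact set $\overline{\Gamma_{+}(u_0)}$ (hence bounded), the limit $V^{\infty} = \lim_{t \to \infty} V(\mathcal{P}(t) u_0)$ exists. For any $y \in \omega(u_0)$ with $\mathcal{P}(t_n) u_0 \to y$, continuity of $V$ gives $V(y) = V^{\infty}$. Forward invariance then yields $V(\mathcal{P}(t) y) = V^{\infty}$ for all $t \geq 0$, so $\dot V \equiv 0$ on the orbit of $y$ and in particular $y \in E$. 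Hence $\omega(u_0) \subseteq E$, and because $\omega(u_0)$ is itself invariant, $\omega(u_0) \subseteq \tilde E$. Combining with the first paragraph, $d_U(\mathcal{P}(t) u_0, \tilde E) \leq d_U(\mathcal{P}(t) u_0, \omega(u_0)) \to 0$.

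The hardest part, in my view, is not the skeleton above, which is essentially forced, but the Banach-space technicalities that finite-dimensional intuition glosses over: one needs strong continuity of each $\mathcal{P}(\tau)$ to get invariance of $\omega(u_0)$; one needs the orbit to actually remain in the region $\mathcal{G}$ where $\dot V \leq 0$ is known to hold; and the passage from ``$V$ constant on $\omega(u_0)$'' to ``$\dot V = 0$ on $\omega(u_0)$'' implicitly uses that $\dot V$ along orbits is a well-defined classical derivative along the semigroup flow. Each of these has to be verified case by case in the PDE applications that motivate this infinite-dimensional version of the principle.
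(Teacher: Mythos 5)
The paper does not prove this lemma: it is stated in the preliminaries as a known result and attributed to the cited references (Hale, Walker), so there is no in-paper proof to compare against. Your argument is the standard $\omega$-limit-set proof found in those references, and its skeleton is correct: precompactness gives a nonempty compact $\omega(u_0)$ that attracts the orbit, continuity of the semigroup gives invariance, monotonicity plus continuity of $V$ forces $V$ to be constant on $\omega(u_0)$, hence $\omega(u_0) \subseteq E$, and invariance places it inside $\tilde{E}$. One point deserves more care than you give it: you establish only \emph{positive} invariance ($\mathcal{P}(\tau) y \in \omega(u_0)$), but $\tilde{E}$ is the largest \emph{invariant} subset of $E$, which for semigroups is usually taken to mean $\mathcal{P}(t)\,\tilde{E} = \tilde{E}$ for all $t \geq 0$; to conclude $\omega(u_0) \subseteq \tilde{E}$ you need the full invariance $\mathcal{P}(t)\,\omega(u_0) = \omega(u_0)$, i.e.\ every point of $\omega(u_0)$ has a preimage in $\omega(u_0)$ under $\mathcal{P}(t)$. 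This does hold for precompact orbits (extract, for $y = \lim_n \mathcal{P}(t_n)u_0$, a convergent subsequence of $\mathcal{P}(t_n - t)u_0$ and use continuity), but it is an extra step, not a consequence of the forward-invariance argument you wrote. Your closing remarks correctly identify the remaining hypotheses (strong continuity, the orbit remaining in $\mathcal{G}$, and the meaning of $\dot{V}$ along the flow) that must be checked in each PDE application.
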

\subsection{Continuum model of the swarm}
Given that $N$, the number of agents in the swarm, is very large, we
will analyze the swarm dynamics through a continuum approximation. Let
$t \in \realnonnegative$, and let $M: \real \rightrightarrows \real^n$ be a
smooth one-parameter family of bounded open sets, such that the agents
are deployed over~$\bar{M}(t)$ at time~$t$. We denote by
$\dot{\mathbf{r}}_i(t) = \mathbf{v}_i$, $\forall i \in \Sc$, where
$\mathbf{r}_i(t) \in \bar{M}(t)$ is the position of the~$i$th agent in
the swarm at time~$t$.  Let $\rho : \real_{\geq 0} \times \real^n
\rightarrow \real_{\geq 0}$ be the spatial density function supported
on~$\bar{M}(t)$ for all~$t \geq 0$ (with $\rho(t, \mathbf{r}) > 0$ for
$\mathbf{r} \in \bar{M}(t)$), such that $\int_{M(t)} \rho(t,
\mathbf{r}) d\mathbf{\mu} = 1$. We assume that~$M(t)$ is simply
connected and that the boundary $\partial M(t)$ does not
self-intersect for all~$t \geq 0$.

Assuming that $\rho$ is smooth, the macroscopic dynamics can
now be described by the continuity equation \cite{AJC-JEM:90},
assuming that the total number of agents is conserved:
\begin{align}
  \frac{\partial \rho}{\partial t} + \nabla \cdot (\rho \mathbf{v}) = 0,
  \hspace{0.2in} \forall~\mathbf{r} \in \mathring{M}(t),
	\label{eqn_continuity}
\end{align}
where $\mathbf{v}: \real_{\geq 0} \times \real^n \rightarrow \real^n$
is the velocity field with $\mathbf{v}_i(t) =
\mathbf{v}(t,\mathbf{r}_i)$, such that the one-parameter family~$M$ is
generated by the flow associated with~$\mathbf{v}$.
\subsection{Harmonic maps and diffeomorphisms}
Let $(M,g)$ and $(N,h)$ be two Riemannian manifolds of dimensions $m$
and $n$, and Riemannian metrics $g$ and $h$, respectively. A map $\phi
: M \rightarrow N$ is called harmonic if it minimizes the functional:
\begin{align}
	E(\phi) = \int_{M} | \nabla \phi|^2 dv_g,
	\label{eqn:harmonic_energy}
\end{align}
where $dv_g$ is the Riemannian volume form on $M$.
The Euler-Lagrange equation for the functional $E$, which also yields
the minimum energy, is given by $\Delta \phi = 0$, the Laplace
equation \cite{FH:02}.  It is useful to note that the solutions to the
heat equation, in the limit $t \rightarrow \infty$, approach the
harmonic map. This is proved later in Lemma~\ref{lemma:stage2_lemma},
and forms the basis for the design of the distributed
pseudo-localization algorithm.  We now state a lemma on harmonic
diffeomorphisms of Riemann surfaces (i.e., $m = n = 2$ above).
\begin{lemma} \longthmtitle{Harmonic diffeomorphism \cite{JE-LL:81}}
\label{lemma:harmonic_diffeomorphism}
Let $(M,g)$ be a compact surface with boundary and $(N,h)$ a compact
surface with non-positive curvature. Suppose that $\psi:M \rightarrow
N$ is a diffeomorphism onto $\psi(M)$. Assume that $\psi(M)$ is
convex. Then there is a unique harmonic map $\phi:M \rightarrow N$
with $\phi = \psi$ on~$\partial M$, such that $\phi: {M} \rightarrow
\phi(M)$ is a diffeomorphism.
\end{lemma}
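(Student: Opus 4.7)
The plan is to establish the three claims of the conclusion separately: existence of a harmonic map with the prescribed boundary data, uniqueness, and the fact that the resulting map is a diffeomorphism onto its image.

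For existence, I would solve the Dirichlet problem for the harmonic map equation with boundary condition $\phi = \psi$ on $\partial M$ via the harmonic map heat flow $\partial_t \phi = \tau(\phi)$ starting from $\phi(0,\cdot) = \psi$, where $\tau$ denotes the tension field. Because $(N,h)$ has non-positive curvature, a Bochner-type identity yields a subharmonic inequality for the energy density $|\nabla \phi|^2$ along the flow, hence uniform a priori $C^1$ bounds. Standard parabolic regularity together with subsequential compactness then produces a smooth limit $\phi_{\infty}$ with $\tau(\phi_{\infty}) = 0$ and $\phi_{\infty}|_{\partial M} = \psi$; this is the Eells--Sampson argument adapted to the boundary-value setting.

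For uniqueness, take two harmonic maps $\phi_1, \phi_2 : M \to \psi(M)$ agreeing on $\partial M$. Since $\psi(M) \subseteq N$ is geodesically convex and $K_N \leq 0$, the squared distance $d_N^2$ is convex along geodesics; composing with $(\phi_1,\phi_2)$ and using the composition formula for the Laplacian of harmonic maps produces a non-negative subharmonic function on $M$ that vanishes on $\partial M$. The maximum principle then forces $d_N(\phi_1,\phi_2) \equiv 0$, i.e.\ $\phi_1 \equiv \phi_2$.

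The genuinely hard step is showing that $\phi$ is a diffeomorphism onto $\phi(M)$, and here I would exploit the two-dimensionality of both surfaces. In isothermal coordinates the Hopf differential of a harmonic map between surfaces is holomorphic, and from this one derives a differential inequality of the form $\Delta \log J_{\phi} \geq -K_N\, J_{\phi}$, with $J_{\phi}$ the Jacobian and $K_N \leq 0$ the Gaussian curvature of $N$. Consequently $\log J_{\phi}$ cannot attain an interior minimum of $-\infty$, so $J_{\phi}$ does not vanish in $\mathring{M}$ unless it already vanishes on the boundary. The convexity of $\psi(M)$ enters decisively here: it lets one set up a hodograph/barrier argument near $\partial M$ showing that $\phi$ cannot fold onto the boundary, giving $J_{\phi} > 0$ on $\partial M$. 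Combined with the interior minimum principle this upgrades $\phi$ to a local diffeomorphism; properness of $\phi$ onto $\phi(M)$, together with the simple connectivity implied by the convexity of $\psi(M)$, then upgrades the local diffeomorphism to a global one, concluding the proof.
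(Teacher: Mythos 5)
The paper does not actually prove this lemma: it is imported verbatim from the literature (the citation is to the Eells--Lemaire report; the underlying theorem is the Schoen--Yau/Jost result on univalent harmonic maps between surfaces), and the authors only add a remark that the curvature hypothesis concerns the metric $h$ on $N$. So there is no in-paper argument to compare against, and your proposal should be judged as a reconstruction of the literature proof --- which it is, and a faithful one. Your three-part decomposition (existence via the Dirichlet harmonic map heat flow with the Bochner inequality supplying a priori bounds when $K_N \leq 0$; uniqueness via subharmonicity of $d_N^2(\phi_1,\phi_2)$ on a convex, non-positively curved target plus the maximum principle; univalency via holomorphicity of the Hopf differential, an elliptic inequality controlling the Jacobian, and a boundary barrier argument using convexity of $\psi(M)$) is exactly the standard route. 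Two points where the sketch is thinner than the real proof: the uniform $C^1$ bounds for the flow require boundary gradient estimates, which themselves need a barrier built from the convexity of $\psi(M)$, not just the interior Bochner inequality; and the inequality you write for $\log J_\phi$ is not quite the right object, since $J_\phi = |\partial\phi|^2 - |\bar\partial\phi|^2$ can a priori change sign --- the Schoen--Yau argument instead works with $\log|\partial\phi|^2$ (or an index count of the isolated zeros of $J_\phi$, each of which has negative index) and combines this with positivity of $J_\phi$ on $\partial M$ to exclude interior zeros. These are presentation-level gaps in an explicitly sketched argument rather than wrong turns; the overall structure is correct and is the proof the cited literature gives.
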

We note that the non-positive curvature constraint in the lemma is
essentially a constraint on the metric $h$ on $N$, and the curvature
is zero for the Euclidean metric.

\section{Problem description and conceptual approach}
\label{sec:conceptual}
In this section, we provide a high-level description of the proposed
problem and explain the conceptual idea behind our approach. The
technical details can be found in the following sections. 

The problem at hand is to ultimately design a distributed control law
for a swarm to converge to a desired configuration. Here, a swarm
configuration is a density function $\rho$ of the multi-agent system
and the objective is that agents reconfigure themselves into a desired
known density $\rho^*$. To do this, an agent at position $x$ is able
to measure the current local density value, $\rho(t, x)$; however, its
position~$x$ within the swarm is unknown. Thus, given $\rho^*$, an
agent at $x$ cannot directly compute $\rho^*(x)$ nor a feedback law
based on $\rho - \rho^*$.  To solve this problem, we devise a
mechanism that allows agents to determine their coordinates in a
distributed way in an equivalent coordinate system.

Note that, given a diffeomorphism $\Theta^*$ from the spatial domain
of the swarm onto the unit interval or disk (i.e.~a coordinate
transformation), we can equivalently provide the agents with a
transformed density function $p^*$, such that $p^* = \rho^* \circ
(\Theta^{*})^{-1}$.  In this way, instead of $\rho^*$ the agents are
given $p^*$, but still do not have access to $\Theta^*$. The
pseudo-localization algorithm is a mechanism that agents employ to
progressively compute an appropriate (configuration-dependent)
diffeomorphism by local interactions.

In 1D, the pseudo-localization algorithm is a continuous-time PDE
system in a new variable or pseudo-coordinate $X$ which plays the role
of an ``approximate $x$ coordinate'' that agents can use to know where
they are. The input to this system is the current density value
$\rho$, see Figure~\ref{fig:interconnection} for an illustration, and
the objective is that $X$ converges to a $\rho$-dependent
diffeomorphism. On the other hand, the variable $X$ and the function
$p^*$ are used to define the control input of another PDE system in
the density $\rho$. In this way, we have a feedback interconnection of
two systems, one in $X$ and one in $\rho$, with the goal to achieve $X
\rightarrow \Theta^*$ (the pseudo-coordinate $X$ converges to a true
coordinate given by $\Theta^*$) and $\rho \rightarrow \rho^*$.
\begin{figure}[!h]
	\begin{center}
	\includegraphics[width=0.7\textwidth]{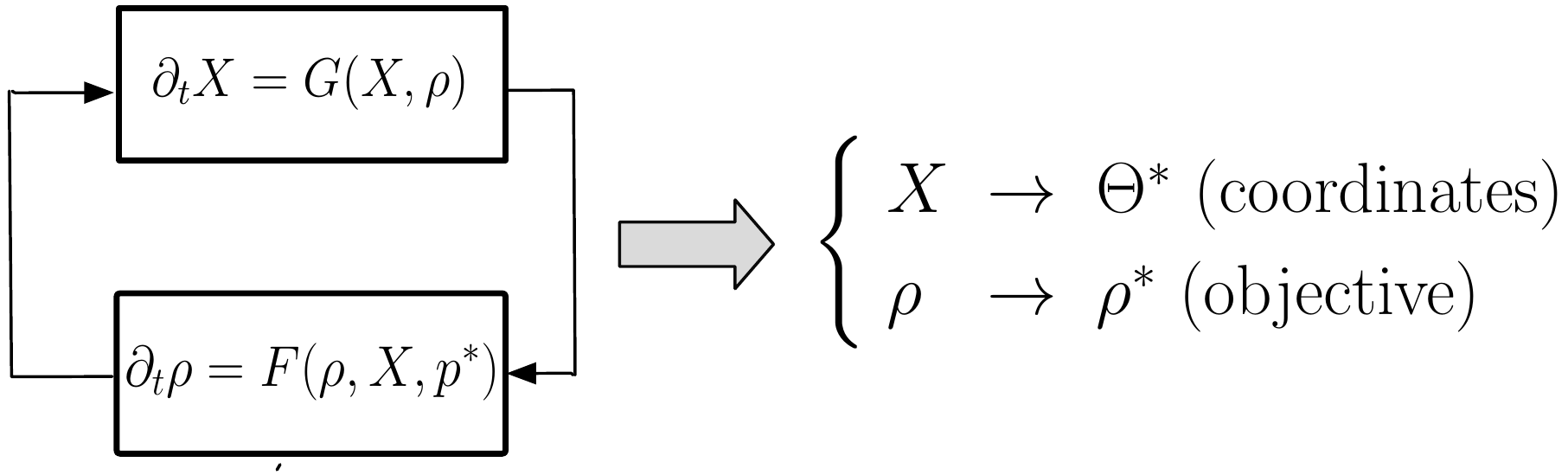}
	\captionsetup{justification=centering}
	\end{center}
	\caption{Feedback interconnection of pseudo-localization
          system in $X$ and system in $\rho$ in the 1D case. The
          function $p^* $ is an equivalent density objective provided
          to agents in terms of a diffeomorphism $\Theta^*$. The
          variables $X$ play the role of coordinates and eventually
          converge to the true coordinates given by $\Theta^*$.
          Agents use $p^*$ and $X$ to compute the control in the
          equation $\rho$. In turn, agents move and this will require
          a re-computation of coordinates or update in $X$. The control
          strategy in the 2D case (stages 2 and 3) can be interpreted
          similarly. }
	\label{fig:interconnection}
\end{figure}

As for the control design methodology, we follow a
constructive, Lyapunov-based approach to designing distributed control
laws for the swarm dynamics modeled by PDEs.  For this, we define
appropriate non-negative energy functionals that encode the objective
and choose control laws that keep the time derivative of the energy
functional non-positive. This, along with well-known results on the
precompactness of solutions as in Lemma~\ref{lemma:RKCT}, the Rellich
Kondrachov compactness theorem, allows us to apply the LaSalle
Invariance Principle in Lemma~\ref{lemma:LaSalle_inv} and other
technical arguments to establish the convergence results that we seek.

In the 1D case, we can identify a set of diffeomorphisms $\Theta$
associated with any $\rho$ that eventually converge to $\Theta^*$, and
simultaneously control boundary agents into a desired final domain
(the support of $\rho^*$). These are given by the cumulative
distribution function associated with the density function; see
Section~\ref{sec:pseudoloc_1D}.  The 2D case is more complex, and
analogous results could not be derived in their full generality.
Unlike the 1D case, estimating the cumulative distribution is not straightforward in the 2D case. 
Instead, we set out to find
diffeomorphisms as the result of a distributed algorithm. Given that
the discretization of heat flow naturally leads to distributed
algorithms, we investigate under what conditions this is the case via
harmonic map theory.  On the control side, there also are additional
difficulties, and because of this, we simplify the control strategy
into three stages.  In the first stage, the boundary agents are
re-positioned onto the boundary of the desired domain while containing
the others in the interior. Once this is achieved, the second and
third stages can be seen again as the interconnection of two systems
in pseudo-coordinates $R=(X,Y)$ (instead of $X$) and $\rho$,
analogously to Figure~\ref{fig:interconnection}. However, we apply a
two time-scale separation for analysis by which coordinates are
computed in a fast-time scale and reconfiguration is done in a
slow-time scale, which allows for a sequential analysis of the two
stages. We then study the robustness of this approach. We refer the reader to
the extended version of this paper~\cite{VK-SM:17} for further description of the 
discrete implementation.
\section{Self-organization in one dimension}
\label{sec:1D_Self_Organization}
In this section, we present our proposed pseudo-localization algorithm and
the distributed control law for the $1$D self-organization problem.

For each $t \in \realnonnegative$, let $M(t) = (0,
L(t)) \subset \real$ be the interval (with boundary~$\lbrace 0, L(t) \rbrace$) in which the agents are
distributed in 1D, and let $\rho: \real \times \real \rightarrow
\real_{\geq 0}$ be the normalized density function supported on $\bar{M}(t)$,
for all $t \geq 0$ (with $\rho(t, x) > 0$, $\forall x \in \bar{M}(t)$),
describing the swarm on that interval. Without loss of generality, we
place the origin at the leftmost agent of the swarm. We also assume
that the leftmost and the rightmost agents,~$l$ and~$r$, are aware
that they are at the boundary. Let $\rho^*: \bar{M}^* = [0,L^*] \rightarrow
\realpositive$ be the desired normalized density distribution. 

Since a direct feedback control law can not be implemented by agents
because they do not have access to their positions, we introduce an
equivalent representation of the density $\rho^*$, $p^*$, depending on
a particular diffeomorphism $\Theta^*$. First, define $\Theta^*: \bar{M}^*
\rightarrow [0,1]$ such that $\Theta^*(x) = \int_0^{x}
\rho^*(\bar{x})d\bar{x}$ and $\Theta^*(L^*) = 1$.

Now, let $p^*: [0,1] \rightarrow \realpositive$, and
$\theta^* \in \Theta^*(\bar{M}^*) = [0,1]$, be such that $p^*(\theta^*) =
\rho^*((\Theta^*)^{-1}(\theta^*)) = \rho^*(x)$.
   \begin{figure}[H]
   \centering
   	\begin{tikzcd}
             &\rho^*(x) = p^*(\theta^*) \\
             x \in [0, L^*] \arrow[ur, mapsto, shift left, start anchor =
             {[xshift = -4ex]}, end anchor = {[xshift = -1.5ex]},
             "\rho^*"] \arrow[r, mapsto, end anchor = {[xshift = 6ex]},
             "\Theta^*"] & \hspace{1cm} \Theta^*(x) = \theta^* \in [0,1]
             \arrow[u, mapsto, shift right = 4ex, "p^*"]
   	\end{tikzcd}
   \end{figure}
The function $p^*$, which represents the desired density distribution
mapped onto the unit interval~$[0,1]$, is computed offline and is
broadcasted to the agents prior to the beginning of the
self-organization process. We use~$p^*$ to derive the distributed
control law which the agents implement.  We assume that~$p^*$ is a
Lipschitz function in the sequel. 
\begin{assumption}\longthmtitle{Uniform boundedness of density function}
We assume that the density function and its derivative are uniformly bounded in its support, that is,
for~$\rho(t, \cdot)$ and~$\partial_x \rho (t, \cdot)$ there exist uniform lower bounds $d_l, D_l$
and uniform upper bounds $d_u, D_u$ (where $0< d_l \leq d_u < \infty$ and~$0< D_l \leq D_u < \infty$)
(that is, $d_l \leq \rho(t,x) \leq d_u$ for all~$t \in \realnonnegative$ and $x \in  [0,L(t)]$ and~$D_l \leq \partial_x \rho(t,x) \leq D_u$
for all~$t \in \realnonnegative$ and $x \in  (0,L(t))$).
\label{assumption:rho}
\end{assumption}
\subsection{Pseudo-localization algorithm in one dimension}
\label{sec:pseudoloc_1D}

We first consider the static case, that is, the design of the
pseudo-localization dynamics on $X$ of the upper block in
Figure~\ref{fig:interconnection}, when the agents and $\rho$ are
stationary. We define $\Theta : \bar{M} = [0,L] \rightarrow [0,1]$ as:
\begin{align}
	\Theta(x) = \int_0^x \rho(\bar{x})d\bar{x},
	\label{eq:theta}
\end{align}
such that $\Theta(L) = 1$. In other words, $\Theta$ is the cumulative
distribution function (CDF) associated with $\rho$. (Note that the
domains are static and hence the argument~$t$ has been dropped, which
will be reintroduced later.)

\begin{lemma}\longthmtitle{The CDF diffeomorphism}
\label{diffeo_lemma}
Given $\rho: \bar{M} \rightarrow \realpositive$, a~$C^1$ function, the
mapping $\Theta : \bar{M} \rightarrow [0,1]$ as defined above, is a
diffeomorphism and $\Theta(\bar{M}) = [0,1]$.
\end{lemma}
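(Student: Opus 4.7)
The strategy is to verify the three defining properties of a diffeomorphism in turn: regularity, bijectivity between the claimed domain and codomain, and smoothness of the inverse. Because $\rho \in C^1(\bar M)$ is assumed strictly positive on the compact interval $\bar M = [0,L]$, all three will follow essentially from the fundamental theorem of calculus plus the inverse function theorem in one variable, so no hard analysis is needed.

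First I would observe that since $\rho$ is continuous, the definition $\Theta(x)=\int_0^x \rho(\bar x)\,d\bar x$ makes sense for every $x \in [0,L]$, and the fundamental theorem of calculus gives $\Theta'(x) = \rho(x)$ for all $x \in \bar M$. Because $\rho \in C^1(\bar M)$, this shows $\Theta \in C^2(\bar M)$, and in particular $\Theta$ is at least $C^1$.

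Next I would show the range is exactly $[0,1]$ and that $\Theta$ is a bijection onto it. By definition $\Theta(0)=0$, and by the normalization $\int_{M}\rho\,dx=1$ built into the continuum swarm model we get $\Theta(L) = \int_0^L \rho(\bar x)\,d\bar x = 1$. Since $\Theta'(x)=\rho(x)>0$ on $\bar M$ by hypothesis, $\Theta$ is strictly increasing, hence injective; and by the intermediate value theorem applied on $[0,L]$ with endpoint values $0$ and $1$, it is surjective onto $[0,1]$. Thus $\Theta:\bar M\to [0,1]$ is a continuous bijection, and in particular $\Theta(\bar M)=[0,1]$ as claimed.

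Finally I would invoke the inverse function theorem: since $\Theta$ is $C^1$ and $\Theta'(x)=\rho(x)>0$ everywhere on $\bar M$, its inverse $\Theta^{-1}:[0,1]\to\bar M$ is also $C^1$, with $(\Theta^{-1})'(\theta) = 1/\rho(\Theta^{-1}(\theta))$. Combined with the previous steps, $\Theta$ is a $C^1$ bijection with $C^1$ inverse, i.e.\ a diffeomorphism from $\bar M$ onto $[0,1]$. There is no real obstacle here; the only point requiring a small care is that $\bar M$ is a closed interval, so ``diffeomorphism'' is understood in the usual sense for manifolds with boundary, and one should remark that the strict positivity of $\rho$ up to and including the endpoints is exactly what guarantees differentiability of $\Theta^{-1}$ at $0$ and $1$.
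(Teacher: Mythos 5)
Your proof is correct and follows essentially the same route as the paper's: strict positivity of $\rho$ gives strict monotonicity and hence bijectivity, the fundamental theorem of calculus gives $\Theta' = \rho$ and the $C^1$ regularity, and the inverse function theorem yields the differentiable inverse, with $\Theta(L)=1$ pinning down the range. Your version merely spells out the endpoint/surjectivity details that the paper leaves implicit.
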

\begin{proof}
  Since $\rho(x) > 0$, $\forall x \in \bar{M}$, it follows that $\Theta$ is a
  strictly increasing function of $x$, and is therefore a one-to-one
  correspondence on $\bar{M}$. Moreover, $\Theta$ is atleast~$C^1$ and has a
  differentiable inverse, which implies it is a
  diffeomorphism. Finally, since $\Theta(L) = 1$, we have $\Theta(\bar{M}) =
  [0,1]$.
\end{proof}
Our goal here is to set up a partial differential equation with
appropriate boundary conditions that yield the diffeomorphism~$\Theta$
as its asymptotically stable steady-state solution. We begin by
setting up the pseudo-localization dynamics for a stationary swarm (for which
the spatial domain $M$ and the density distribution $\rho$ are
fixed). Let $X: \real \times \bar{M} \rightarrow \real$ be such that $(t,x)
\mapsto X(t,x) \in \real$, with:
\begin{align}
	\begin{aligned}
          &\partial_t X= \frac{1}{\rho} \partial_x \left( \frac{\partial_x X}{\rho} \right), \\
          &X(t,0) = \alpha(t), \hspace{0.2in} X(t,L) = \beta(t), \hspace{0.2in} X(0,x) = X_0(x),\\
          & \dot{\alpha} (t) =  - \alpha(t),\hspace{0.2in} \dot{ \beta }(t) = 1 - \beta(t),
	\end{aligned}
	\label{pseudoloc_cont}
\end{align}
where $\alpha: \real \rightarrow \real$ is a control input at the
boundary~$x=0$ and $\beta: \real \rightarrow \real$ is a control input
at the boundary~$x = L$.  From~\eqref{eq:theta}, we observe that
$\partial_x \left( \frac{\partial_x \Theta}{\rho} \right) =
0$. Letting $w = X - \Theta$ denote the error, we obtain:
\begin{align}
  \begin{aligned}
    &\partial_t w= \frac{1}{\rho} \partial_x \left( \frac{\partial_x w}{\rho} \right), \\
    &\frac{d}{dt} w(t,0) = - w(t,0), \hspace{0.2in} \frac{d}{dt} w(t,L) = - w(t,L), \hspace{0.2in} w(0,x) = X_0(x)- \Theta(x). 
  \end{aligned}
  \label{pseudoloc_cont_err}
\end{align}
\begin{assumption}\longthmtitle{Well-posedness of the
    pseudo-localization dynamics}
\label{assumption:smoothness_1d}
We assume that the pseudo-localization dynamics~\eqref{pseudoloc_cont}
(and~\eqref{pseudoloc_cont_err}) is well-posed, that the solution is
sufficiently smooth (at least~$\mathcal{C}^2$ in the spatial variable,
even as $t \rightarrow \infty$) and belong to the Sobolev
space~$H^1(M)$.
\end{assumption}
\begin{lemma}\longthmtitle{Pointwise convergence to diffeomorphism}
  Under Assumption~\ref{assumption:smoothness_1d}, on the
  well-posedness of the pseudo-localization dynamics, and Assumption~\ref{assumption:rho} 
  on the boundedness of~$\rho$, the solutions
  to PDE~\eqref{pseudoloc_cont} converge pointwise to the CDF
  diffeomorphism~$\Theta$ defined in \eqref{eq:theta}, as~$t
  \rightarrow \infty$, for all~$C^2$ initial conditions~$X_0$.
\end{lemma}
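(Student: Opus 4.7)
The strategy is to exploit the cascade structure of the error equation~\eqref{pseudoloc_cont_err} --- a linear parabolic interior PDE driven by a decoupled, exponentially stable boundary ODE --- and close with a Lyapunov-plus-compactness estimate. First, I would solve the boundary ODEs explicitly to obtain $w(t,0) = w(0,0) e^{-t}$ and $w(t,L) = w(0,L) e^{-t}$, so the Dirichlet data decay exponentially to zero. Then I would homogenize the boundary conditions by subtracting the smooth lift $\psi(t,x) = w(t,0)(1 - x/L) + w(t,L)(x/L)$; the shifted error $\tilde w := w - \psi$ satisfies the same PDE with homogeneous Dirichlet data $\tilde w(t,0) = \tilde w(t,L) = 0$ and an additional forcing $g(t,x) = \frac{1}{\rho}\partial_x\!\left( \frac{\partial_x \psi}{\rho}\right) - \partial_t \psi$, which satisfies $\|g(t,\cdot)\|_{L^2} = O(e^{-t})$ by Assumption~\ref{assumption:rho}.

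Second, I would analyze the homogenized equation via the weighted Lyapunov functional
\begin{equation*}
V(t) \;=\; \frac{1}{2}\int_0^L \tilde w(t,x)^2 \, \rho(x)\, dx,
\end{equation*}
which is natural because the unforced equation is formally the gradient flow of the Dirichlet-type energy $\int_0^L \frac{(\partial_x \tilde w)^2}{\rho}\,dx$ in the $L^2(\rho\,dx)$ inner product. Integration by parts using the homogeneous Dirichlet conditions gives
\begin{equation*}
\dot V \;=\; -\int_0^L \frac{(\partial_x \tilde w)^2}{\rho}\,dx \;+\; \int_0^L \tilde w \, g \, \rho\,dx.
\end{equation*}
A Poincar\'e inequality for functions vanishing on $\partial M$ (trivial in 1D via the fundamental theorem of calculus), together with the uniform two-sided bounds on $\rho$ from Assumption~\ref{assumption:rho}, yields a coercivity estimate $\int_0^L \frac{(\partial_x\tilde w)^2}{\rho}\,dx \geq 2\lambda V$ for some $\lambda > 0$; Cauchy--Schwarz bounds the forcing term by $C e^{-t}\sqrt{V}$. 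The resulting differential inequality $\dot V \leq -\lambda V + C e^{-t}\sqrt{V}$ (equivalently $\frac{d}{dt}\sqrt V \leq -\frac{\lambda}{2}\sqrt V + \frac{C}{2}e^{-t}$) implies $V(t)\to 0$ by a Gronwall argument, so $\tilde w \to_{L^2} 0$; since $\psi(t,\cdot) \to 0$ uniformly, we also obtain $w \to_{L^2} 0$.

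Third, to upgrade $L^2$ to pointwise convergence, I would invoke Assumption~\ref{assumption:smoothness_1d}: the solution stays uniformly bounded in $C^2$ spatially, so $\{w(t,\cdot)\}_{t\ge 0}$ is bounded in $W^{1,p}(M)$ for any $p > 1$. Lemma~\ref{lemma:RKCT} (Rellich--Kondrachov) then implies this orbit is precompact in $C^0(\bar M)$, using the compact embedding $W^{1,p}\hookrightarrow C^0$ valid in 1D for $p>1$. Any $C^0$-subsequential limit must coincide with the $L^2$-limit $0$, so the whole family $w(t,\cdot)$ converges uniformly --- hence pointwise --- to zero, giving $X(t,x) \to \Theta(x)$ for every $x \in \bar M$. \emph{The main obstacle} is the time-varying Dirichlet boundary conditions; the key insight that unlocks the proof is that the boundary ODEs are closed and exponentially stable, so they can be integrated first and absorbed into an exponentially decaying interior forcing, after which everything reduces to standard linear parabolic stability with a mild compactness upgrade.
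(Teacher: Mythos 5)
Your proof is correct in its essentials but takes a genuinely different route from the paper's. The paper never homogenizes the boundary data: it works directly with the $H^1$-type functional $V = \frac{1}{2}\int_M \rho|w|^2 + \frac{1}{2}\int_M \frac{1}{\rho}|\partial_x w|^2$, whose time derivative, after two integrations by parts and substitution of the boundary ODEs, collapses to $-\int_M \frac{1}{\rho}|\partial_x w|^2 - \int_M \frac{1}{\rho}\left|\partial_x\!\left(\frac{\partial_x w}{\rho}\right)\right|^2$ with all boundary terms cancelling; it then invokes Rellich--Kondrachov precompactness and LaSalle to get $\|\partial_x w\|_{L^2}\to 0$, and finishes pointwise via $|w(t,x)|\le |w(t,0)| + \sqrt{L}\,\|\partial_x w\|_{L^2}$. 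Your decomposition $w=\tilde w+\psi$ with an exponentially decaying lift, the weighted $L^2$ functional, Poincar\'e coercivity, and Gronwall buys something the paper does not have: an explicit exponential rate, with no LaSalle or invariance argument needed for the $L^2$ part. What it costs you is control of the gradient: your functional sees only $\|\tilde w\|_{L^2}$, so the upgrade from $L^2$ to pointwise convergence rests entirely on reading Assumption~\ref{assumption:smoothness_1d} as a uniform-in-time spatial bound, so that the orbit is bounded in $W^{1,p}(M)$ for some $p>1$ and hence precompact in $C^0(\bar{M})$ --- and note that this embedding is Morrey plus Arzel\`a--Ascoli rather than the $p<n$ case of Lemma~\ref{lemma:RKCT}, which is vacuous in one dimension (the paper applies that lemma equally loosely, so this is not held against you). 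That reading is defensible given the phrase ``even as $t \rightarrow \infty$'' in the assumption and is no stronger than what the paper's own compactness step requires, so the argument stands; the paper's choice of functional simply makes the decay of $\partial_x w$ fall out of the Lyapunov identity itself, reducing the pointwise step to a one-line Cauchy--Schwarz estimate.
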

In this case, the swarm is stationary, which implies that the distribution~$\rho$ is fixed (and so is its support~$\bar{M}$),
and the uniform boundedness assumption~\ref{assumption:rho} simply becomes a boundedness
assumption.
\begin{proof}
  We prove that the solutions to the PDE~\eqref{pseudoloc_cont}
  converge pointwise to the diffeomorphism~$\Theta$ by showing that $w
  \rightarrow 0$, as $t \rightarrow \infty$, pointwise
  for~\eqref{pseudoloc_cont_err}.  For this, we consider a functional
  $V$, given by (integrations are with respect to the Lebesgue
  measure):
\begin{align*}
  V = \frac{1}{2} \int_{M} \rho |w|^2 + \frac{1}{2} \int_{M}
  \frac{1}{\rho}|\partial_x w|^2.
\end{align*}
The time derivative $\dot{V}$ is given by:
\begin{align*}
  \dot{V} = \int_{M} \rho w (\partial_t w) + \int_{M}
  \frac{1}{\rho} (\partial_x w) (\partial_t \partial_x w).
\end{align*}
Here, replace $\partial_t w$ in the first integral with
the dynamics in~\eqref{pseudoloc_cont_err}, and then use
$\partial_t \partial_x = \partial_x \partial_t$ in the second integral
together with the Divergence Theorem in Lemma~\ref{le:dt}. We obtain:
\begin{align*}
  \dot{V} &= \int_{M} w \partial_x \left( \frac{\partial_x
      w}{\rho} \right) - \int_{M} \partial_x 
  \left( \frac{\partial_x w}{\rho} \right) 
  \partial_t w + \frac{\partial_x w}{\rho} 
  \partial_t w \bigg|_L - \frac{\partial_x w}{\rho} \partial_t w \bigg|_0 \\
  &= - \int_{M} \frac{1}{\rho} \left| \partial_x w \right|^2 -
  \int_{M} \frac{1}{\rho} \left| \partial_x \left( \frac{\partial_x
        w}{\rho} \right) \right|^2 + \frac{w + \partial_t
    w}{\rho} \partial_x w \bigg|_L - \frac{w + \partial_t
    w}{\rho} \partial_x w \bigg|_0.
\end{align*}
(After the second equal sign, apply again the Divergence Theorem on
the first integral of the previous line, and replace $\partial_t w$
from~\eqref{pseudoloc_cont_err}.)  Substituting
from~\eqref{pseudoloc_cont_err}, we have:
\begin{align*}
  \dot{V} = - \int_{M} \frac{1}{\rho} \left| \partial_x w
  \right|^2 - \int_{M} \frac{1}{\rho} \left| \partial_x \left(
      \frac{\partial_x w}{\rho} \right) \right|^2.
\end{align*}
Clearly, $\dot{V} \leq 0$, and~$w(t,\cdot) \in H^1(M)$, for
all~$t$. Moreover, since~$V(t) \leq V(0)$ and since~$\rho$ is uniformly bounded according to Assumption~\ref{assumption:rho}, 
we have that~$w(t, \cdot)$ is bounded in~$H^1(M)$.
Moreover, by the Rellich-Kondrachov Theorem of
Lemma~\ref{lemma:RKCT}, $H^1(M)$ is compactly contained in~$L^2(M)$.
Then it follows that the solutions~$w(t, \cdot)$ are precompact.
Thus, by the LaSalle Invariance Principle of
Lemma~\ref{lemma:LaSalle_inv}, the solution
to~\eqref{pseudoloc_cont_err} converges in~$L^2$-norm to the largest
invariant subset of~$\dot{V}^{-1} (0)$. Note that $\dot{V} = 0$
implies $\int_M \frac{1}{\rho} |\partial_x w|^2= 0$. Thus, 
$\lim_{t \rightarrow \infty} \int_M \frac{1}{\rho} |\partial_x w|^2=
0$.  Since~$\rho$ is bounded ($\sup \rho < \infty$), we have $\lim_{t
  \rightarrow \infty} \frac{1}{\sup \rho} \int_M |\partial_x w|^2 \leq
\lim_{t \rightarrow \infty} \int_M \frac{1}{\rho} |\partial_x w|^2=
0$, which implies $\lim_{t \rightarrow \infty} \int_M |\partial_x w|^2
= \lim_{t \rightarrow \infty} \| \partial_x w \|_{L^2(M)}^2 = 0$.
%
Now, $\lim_{t \rightarrow
  \infty} |w(t,x)| = \lim_{t \rightarrow \infty} |w(t,0) +
\int_0^x \partial_x w(t,\cdot)| \leq \lim_{t \rightarrow \infty}
|w(t,0)| + \int_0^x |\partial_x w(t,\cdot)| \leq \lim_{t \rightarrow
  \infty} |w(t,0)| + \sqrt{L(t)} \| \partial_x w(t,\cdot) \|_{L^2(M)} = 0$ (since $ \lim_{t \rightarrow \infty} w(t,0) = 0$
  and  $ \lim_{t \rightarrow \infty} \| \partial_x w(t,\cdot) \|_{L^2(M)} = 0$). 
 Thus, $\lim_{t \rightarrow \infty} w(t,x) = 0$, for all $x \in M$.
Therefore, the solutions to~\eqref{pseudoloc_cont_err} converge to
$w\equiv0$ pointwise, as~$t \rightarrow \infty$, from any smooth
initial~$w_0 = X_0 - \Theta$.
\end{proof}

We now have that the solution to the pseudo-localization dynamics converges
to the diffeomorphism $\Theta$ in the stationary case. For the dynamic
case, we modify~\eqref{pseudoloc_cont} to account for agent
motion. Let $X: \real \times \real \rightarrow \real$ be supported
on~$\bar{M}(t) = [0,L(t)]$ for all~$t \geq 0$. Using the relation
$\frac{dX}{dt} = \partial_t X + v \partial_x X$, where $v$ is the
velocity field on the spatial domain, we consider:
\begin{align}
	\begin{aligned}
          &\partial_t X= \frac{1}{\rho} \partial_x \left( \frac{\partial_x X}{\rho} \right) - v \partial_x X, \\
          &X(t,0) = 0, \hspace{0.2in} X(t,L(t)) = \beta(t), \hspace{0.2in} X(0,x) = X_0(x).
	\end{aligned}
	\label{pseudoloc_cont_dynamic}
\end{align}
In the dynamic case, and w.l.o.g.~we have set~$\alpha(t) = 0$ for
all~$t \geq 0$, for simplicity.  We will use the above PDE system in
the design of the distributed motion control law, redesigning the
boundary control~$\beta$ to achieve convergence of the entire system.
We now discretize~\eqref{pseudoloc_cont_dynamic} to obtain a
distributed pseudo-localization algorithm.  Let $X_i(t) = X(t, x_i)$,
where $x_i \in \bar{M}(t)$ is the position of the~$\supscr{i}{th}$ agent. We
identify the agent~$i$ with its desired coordinate in the unit
interval at time~$t$, i.e., $\Theta(t, x) = \theta \in [0,1]$, where
$\Theta(t,x) = \int_{0}^{x} \rho(t,\bar{x}) d \bar{x}$ from
\eqref{eq:theta}, which now shows the time dependency of $\rho$. In
this way, $\rho(t, x) = \partial_x \Theta (t, x)$. It follows that
$\partial_x (\cdot)= \partial_{\theta}(\cdot) \partial_{x} \theta
= \partial_{\theta}(\cdot) \rho$. Therefore, $\frac{1}{\rho}\partial_x
(\cdot) = \partial_{\theta}(\cdot)$.  From
\eqref{pseudoloc_cont_dynamic}, we have:
\begin{align}
	\begin{aligned}
          \frac{dX}{dt} = \partial_t X +
          v \partial_x X = \frac{1}{\rho} 
          \partial_x \left( \frac{\partial_x X}{\rho} \right)
          = \partial_{\theta} \left( \partial_{\theta} X \right) =
          \frac{\partial^2 X}{\partial \theta^2}.
	\end{aligned}
	\label{pseudoloc_cont_discretization}
\end{align}
Now, we discretize~\eqref{pseudoloc_cont_discretization} with the
consistent finite differences $\frac{dX}{dt} \approx \frac{X_i(t+1) -
  X_i(t)}{\Delta t}$ and $\frac{\partial^2 X}{\partial \theta^2}
\approx \frac{X_{i+1} - 2 X_i + X_{i-1}}{(\Delta \theta)^2}$ (that is,
we have that $\lim_{\Delta t \rightarrow 0} \frac{X_i(t+1) -
  X_i(t)}{\Delta t} = \frac{dX}{dt}$ and that $\lim_{\Delta \theta
  \rightarrow 0} \frac{X_{i+1} - 2 X_i + X_{i-1}}{(\Delta \theta)^2}=
$ $\frac{\partial^2 X}{\partial \theta^2}$). Now, with the choice $3
\Delta t = (\Delta \theta)^2$, and from
\eqref{pseudoloc_cont_dynamic}, we obtain for $i \in \mathcal{S}
\setminus \left\lbrace l,r \right\rbrace$:
\begin{align}
	\begin{aligned}	
		&X_i(t+1) = \frac{1}{3}  \left( X_{i-1}(t) +  X_i(t) + X_{i+1}(t) \right),\\
		&X_l(t) = 0, \hspace{0.2in} X_r(t) = \beta(t), \hspace{0.2in} X_i(0) = {X_0}_i.
	\end{aligned} 
	\label{pseudoloc_discrete}
\end{align}
Equation \eqref{pseudoloc_discrete} is the discrete
pseudo-localization algorithm to be implemented synchronously by the
agents in the swarm, starting from any initial condition~$X_0$. The
leftmost agent holds its value at zero while the rightmost agent
implements the boundary control~$\beta$. In the following section we
analyze its behavior together with that of the dynamics on  $\rho$.
\subsection{Distributed density control law and analysis}
\label{subsec:distributed control}
In this subsection, we propose a distributed feedback control law to
achieve $\rho \rightarrow \rho^*$ and $w \rightarrow 0$, as $t
\rightarrow \infty$, through a distributed control input $v$ and a
boundary control $\beta$. 
We refer the reader to \cite{MK-AS:08} for an overview of
Lyapunov-based methods for stability analysis of PDE systems.

From~\eqref{eqn_continuity} and~\eqref{pseudoloc_cont_dynamic}, we
have the dynamics:
\begin{align}
	\begin{aligned}
          &\partial_t \rho = -\partial_x (\rho v), \\
          &\partial_t X = \frac{1}{\rho} \partial_x \left( \frac{\partial_x X}{\rho} \right) - v \partial_x X, \\
          &X(t, 0) = 0, \hspace{0.2in} X(t, L(t)) = \beta(t), \hspace{0.2in} X(0,x) = X_0(x).
	\end{aligned}
	\label{eq:1D_system_dynamics}
\end{align}
This realizes the feedback interconnection of
Figure~\ref{fig:interconnection}.
\begin{assumption}\longthmtitle{Well-posedness of the full PDE system}
\label{as:wellp}
  We assume that~\eqref{eq:1D_system_dynamics} is well posed, and that
  the solutions~$( \rho(t, \cdot), X(t,\cdot) )$ are sufficiently
  smooth (both in~$t$ and~$x \in [0,L(t)]$), satisfy Assumption~\ref{assumption:rho} on the 
  uniform boundedness of~$\rho$ and~$\partial_x \rho$, and are bounded in the Sobolev space~$ H^1((0,1/d_l))$.
\end{assumption}
We also assume that the agent at position~$x$ at time~$t$ is able to
measure $\rho(t, x)$.  However, the agents in the swarm do not have
access to their positions, and therefore cannot access $\rho^*(x)$,
which could be used to construct a feedback law. To circumvent this
problem, we propose a scheme in which the agents use the position
identifier or pseudo-localization variable $X$ to compute $p^*\circ
X(t,x)$, using this as their dynamic set-point. The idea is to then
design a distributed control law and a boundary control law such that
$\rho \rightarrow p^*\circ X$ and $X \rightarrow \Theta^*$, as $t
\rightarrow \infty$, to obtain $\rho \rightarrow p^*\circ \Theta^* =
\rho^*$.  Recall that the function $p^*$ is computed offline and is
broadcasted to the agents prior to the beginning of the
self-organization process, and that~$p^*$ is assumed to be a Lipschitz
function.
Consider the distributed control law, defined as follows for all time
$t$:
\begin{align}
\begin{aligned}
  &v(t,0) = 0, \hspace{0.2in} \partial_x v = (\rho - p^* \circ X) - \frac{\partial_X
    p^*}{\rho(\rho + p^* \circ X)}\partial_x \left( \frac{\partial_x
      X}{\rho} \right),
\end{aligned}
	\label{distributed_control_law}
\end{align}
together with the boundary control law:
\begin{align}
	\begin{aligned}
          &X(t,0) = 0, \hspace{0.2in} \beta_t = k \left( 2 - \beta(t) - \frac{X_x}{\rho}\bigg|_{L(t)} \right).
	\end{aligned}
	\label{boundary_control_law}
\end{align}
We remark again that the agents implementing the control
laws~\eqref{distributed_control_law} and~\eqref{boundary_control_law}
do not require position information, because for the agent at
position~$x$ at time~$t$, $\rho(t,x)$ is a measurement, $X(t,x)$ is
the pseudo-localization variable, through which $p^* \circ X(t,x)$ can
be computed.
\begin{theorem}\longthmtitle{Convergence of solutions}
  Under the well-posedness Assumption~\ref{as:wellp}, the
  solutions~$(\rho(t,\cdot), X(t,\cdot))$
  to~\eqref{eq:1D_system_dynamics}, under the control laws
  \eqref{distributed_control_law} and \eqref{boundary_control_law},
  converge to~$(\rho^*, \Theta^*)$, $\rho \rightarrow \rho^*$ in $L^2-norm$ and $X
  \rightarrow \Theta^*$ pointwise as~$t \rightarrow \infty$, from any
  smooth initial condition~$(\rho_0$, $ X_0)$.
\end{theorem}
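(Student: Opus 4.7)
The plan is to construct a joint Lyapunov functional, apply the LaSalle Invariance Principle on a precompact orbit, and then upgrade $L^2$ convergence to pointwise convergence of $X$ using the left boundary condition. First, I would recast the dynamics in terms of the pseudo-localization error $w = X - \Theta$, where $\Theta(t,x) = \int_0^x \rho(t,\bar x)\,d\bar x$ is the running CDF associated with $\rho$. Because $v(t,0) = 0$, the continuity equation yields $\partial_t \Theta = -\rho v$, and together with the identity $\partial_x(\partial_x X/\rho) = \partial_x(\partial_x w/\rho)$ this gives
\begin{align*}
\partial_t w = \frac{1}{\rho}\partial_x\!\Bigl(\frac{\partial_x w}{\rho}\Bigr) - v\,\partial_x w, \qquad w(t,0) = 0, \qquad w(t,L(t)) = \beta(t) - 1,
\end{align*}
so that $w$ obeys a transport--diffusion equation reducing to the stationary pseudo-localization dynamics when $v \equiv 0$.

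Next, I would consider a candidate functional
\begin{align*}
V(t) = \frac{1}{2}\!\int_{M(t)}\!\! \rho\, w^2\, dx + \frac{1}{2}\!\int_{M(t)}\!\! \frac{1}{\rho}|\partial_x w|^2\, dx + \frac{1}{2}\!\int_{M(t)}\!\!\Phi\bigl(\rho, p^*\!\circ\! X\bigr)\, dx + \frac{1}{2k}\bigl(\beta(t) - 1\bigr)^2,
\end{align*}
where $\Phi$ is a non-negative pointwise cost vanishing exactly when $\rho = p^*\circ X$ (starting point: $\Phi = (\rho - p^*\circ X)^2$, possibly reweighted). Using Corollary~\ref{lemma:time_der} together with the Divergence Theorem (Lemma~\ref{le:dt}), the first two summands should reproduce the dissipation from the stationary pseudo-localization lemma, namely $-\int \frac{|\partial_x w|^2}{\rho}\, dx - \int \frac{1}{\rho}|\partial_x(\partial_x w/\rho)|^2\, dx$ plus boundary contributions, with the transport correction $-v\,\partial_x w$ generating only $\partial_x v$-terms that cancel pairwise against the boundary term of the Leibniz rule, and the weight $\rho$ in the first summand chosen exactly so that the $\frac{d\rho}{dt}$ piece cancels the $\partial_x v$ piece by the continuity equation. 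Substituting~\eqref{distributed_control_law} into the derivative of the third summand should produce a $-(\rho - p^*\circ X)^2$ dissipation plus a cross term whose coefficient $\partial_X p^*/[\rho(\rho + p^*\circ X)]$ is tailored to cancel the off-sign cross term coming from $\partial_X p^*\,\partial_x(\partial_x X/\rho)$ in $\tfrac{d}{dt}(\rho - p^*\circ X)$. Finally,~\eqref{boundary_control_law} should be such that $\frac{1}{k}(\beta - 1)\dot\beta$ absorbs the residual boundary contribution at $x = L(t)$. The main obstacle is this reverse-engineering of $V$ from the rational form of the two control laws; once completed it should yield $\dot V \leq 0$ with dissipation controlling $\int(\rho - p^*\circ X)^2$, $\int \frac{1}{\rho}|\partial_x w|^2$ and $|\beta - 1|^2$.

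With $\dot V \leq 0$ in hand, Assumption~\ref{as:wellp} furnishes uniform $H^1$-boundedness of the orbit on a fixed reference interval (Assumption~\ref{assumption:rho} gives $L(t) \leq 1/d_l$), so Lemma~\ref{lemma:RKCT} yields precompactness in $L^2$, and Lemma~\ref{lemma:LaSalle_inv} gives convergence in $L^2$ to the largest invariant subset of $\dot V^{-1}(0)$. On this invariant set, $\rho \equiv p^*\circ X$, $\partial_x w \equiv 0$ and $\beta \equiv 1$. Since $w(t,0) = 0$, spatial constancy forces $w \equiv 0$, i.e.~$X = \Theta$; substituting gives $\partial_x \Theta = \rho = p^*\circ \Theta$ with $\Theta(t,0) = 0$, which is precisely the ODE characterizing $\Theta^*$, so $\Theta \equiv \Theta^*$ on their common support. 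The boundary identity $\beta = \Theta(t,L(t)) = 1 = \Theta^*(L^*)$ together with strict monotonicity of $\Theta^*$ then forces $L(t) = L^*$, yielding $(\rho, X) = (\rho^*, \Theta^*)$.

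The pointwise convergence of $X$ is obtained by repeating the closing estimate of the stationary pseudo-localization lemma: since $\|\partial_x w(t,\cdot)\|_{L^2(M(t))} \to 0$ and $w(t,0) = 0$, Cauchy--Schwarz gives $|w(t,x)| \leq \sqrt{L(t)}\,\|\partial_x w(t,\cdot)\|_{L^2(M(t))} \to 0$, so $X \to \Theta$ pointwise; and since $\rho \to \rho^*$ in $L^2$ yields $\Theta(t,x) = \int_0^x \rho \to \int_0^x \rho^* = \Theta^*(x)$ pointwise, we conclude $X(t,x) \to \Theta^*(x)$ pointwise, as claimed.
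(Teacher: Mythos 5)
Your overall strategy coincides with the paper's: same error variable $w = X-\Theta$ with the same reduced dynamics $\partial_t w = \tfrac{1}{\rho}\partial_x(\tfrac{\partial_x w}{\rho}) - v\,\partial_x w$ and boundary values $w(t,0)=0$, $w(t,L(t))=\beta(t)-1$; a Lyapunov functional combining a density-mismatch term, a weighted $\|w\|^2$ term and a boundary term; Rellich--Kondrachov precompactness plus LaSalle; and the same Cauchy--Schwarz estimate $|w(t,x)|\le\sqrt{L(t)}\,\|\partial_x w\|_{L^2}$ to upgrade to pointwise convergence. Your identification of the limit via the ODE $\partial_x\Theta = p^*\circ\Theta$, $\Theta(t,0)=0$ is equivalent to the paper's integral identity $x=\int_0^{\Theta(x)}(p^*(\theta))^{-1}d\theta$.

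The gap is that the crux of the proof --- verifying that the specific control laws \eqref{distributed_control_law} and \eqref{boundary_control_law} render $\dot V\le 0$ --- is exactly the step you defer (``once completed it should yield $\dot V\le 0$''), and the functional you propose does not make it go through as stated. The extra summand $\tfrac12\int\tfrac{1}{\rho}|\partial_x w|^2$, imported from the stationary lemma, contributes after integration by parts the boundary term $\bigl[\tfrac{\partial_x w}{\rho}\cdot\tfrac{dw}{dt}\bigr]_0^{L(t)}$; with the dynamic boundary control $\tfrac{dw}{dt}\big|_{L(t)} = -\bigl(\tfrac{\partial_x w}{\rho}+kw\bigr)\big|_{L(t)}$ this produces, besides the benign $-|\tfrac{\partial_x w}{\rho}|^2\big|_{L(t)}$, the sign-indefinite cross term $-k\,\tfrac{w\,\partial_x w}{\rho}\big|_{L(t)}$, which your boundary summand $\tfrac{1}{2k}(\beta-1)^2$ does not absorb (it can be dominated by Young's inequality only for $k$ small enough, an argument you would have to supply). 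The paper sidesteps this entirely: its functional is $\tfrac12\int|\rho-p^*\circ X|^2 + \tfrac12\int\rho|w|^2 + \tfrac12|w(L(t))|^2$ with \emph{no} gradient term, because the dissipation $-\int\tfrac{|\partial_x w|^2}{\rho}$ already falls out of integrating $\int\rho w\,\partial_t w$ by parts, and the resulting boundary contribution $\tfrac{w\,\partial_x w}{\rho}\big|_{L(t)}$ combines exactly with $w(L)\tfrac{dw(L)}{dt}$ and the boundary law to give $-k|w(L(t))|^2$. Likewise your $\Phi$ is left as ``possibly reweighted''; the paper's computation shows the plain $(\rho-p^*\circ X)^2$ is the right choice, with the coefficient $\partial_X p^*/[\rho(\rho+p^*\circ X)]$ in \eqref{distributed_control_law} designed precisely to cancel the cross term from $\tfrac{d}{dt}(p^*\circ X)$ and leave $-\tfrac12\int(\rho+p^*\circ X)|\rho-p^*\circ X|^2$. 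So the architecture is right, but the one computation that justifies the theorem is missing, and the candidate $V$ you start from would need either modification or an additional boundary estimate to close it.
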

\begin{proof}
  Consider the candidate control Lyapunov functional $V$: 
  \begin{align*}
    V = \frac{1}{2} \int_{0}^{L(t)} |\rho - p^*\circ X |^2 dx +
    \frac{1}{2} \int_{0}^{L(t)} \rho |w|^2 dx +
    \frac{1}{2} |w(L(t))|^2.
\end{align*}
Taking the time derivative of $V$ along the dynamics
\eqref{eq:1D_system_dynamics}, using Lemma~\ref{lemma:Leibniz_rule} on
the Leibniz integral rule, and applying Corollary~\ref{lemma:time_der}
on the derivative of energy functionals, we obtain:
\small
\begin{align*}
  \dot{V} =& \int_{0}^{L(t)} (\rho - p^*\circ X)\left(\frac{d
      \rho}{dt} - \frac{d (p^*\circ X) }{dt} \right) dx +
  \frac{1}{2}\int_0^{L(t)}|\rho - p^*\circ X|^2 \partial_x v~dx
  \\
  &+ \int_0^{L(t)} \rho w \partial_t w~dx  + \frac{1}{2} \int_0^{L(t)} (\partial_t \rho) |w|^2~dx + \frac{1}{2} \rho |w|^2 v \bigg|_{0}^{L(t)}  
    + w(L)\frac{dw(L(t))}{dt}.
\end{align*}
\normalsize
Now, $\frac{d \rho}{dt} = \partial_t \rho + v \partial_x \rho = -
\rho \partial_x v$ (since $\partial_t \rho = - \partial_x(\rho v)$,
from~\eqref{eq:1D_system_dynamics}), and $\partial_t w =
\frac{1}{\rho} \partial_x \left( \frac{\partial_x w}{\rho} \right) -
v \partial_x w$.
Thus, we obtain:
\small
\begin{align*}
  \dot{V} = & \int_{0}^{L(t)} (\rho - p^*\circ X)
  \left[-\rho \partial_x v - \partial_X p^* \frac{1}{\rho} \partial_x
    \left( \frac{\partial_x X}{\rho} \right) \right] dx + \frac{1}{2}\int_0^{L(t)}|\rho - p^*\circ X|^2 \partial_x v~dx \\
    &+ \int_0^{L(t)} w \partial_x \left( \frac{\partial_x w}{\rho} \right)~dx 
    -  \int_0^{L(t)} \rho v w \partial_x w~dx  - \frac{1}{2} \int_0^{L(t)} \partial_x(\rho v) |w|^2~dx + \frac{1}{2} \rho |w|^2 v \bigg|_{0}^{L(t)} \\
  & + w(L)\frac{dw(L(t))}{dt}.
\end{align*}
\normalsize
Now, using the above equation, applying the Divergence
theorem~\eqref{eqn:divergence_thm} (integration by parts) 
and rearranging the terms, we
obtain:
\begin{align*}
  \dot{V} =& -\frac{1}{2} \int_{0}^{L(t)} (\rho - p^*\circ X)
  \left[(\rho + p^*\circ X) (\partial_x v) + \frac{\partial_X
      p^*}{\rho}  
    \partial_x \left( \frac{\partial_x X}{\rho} \right) \right]~dx \\
  & + \frac{w \partial_x w}{\rho} \bigg|_{0}^{L(t)} - \int_{0}^{L(t)} \frac{|\partial_x w|^2}{\rho}~dx
   -  \int_0^{L(t)} \rho v w \partial_x w~dx - \frac{1}{2} \rho v |w|^2 \bigg|_{0}^{L(t)} \\
   & + \int_0^{L(t)} \rho v w \partial_x w~dx  + \frac{1}{2} \rho |w|^2 v \bigg|_{0}^{L(t)} + w(L)\frac{dw(L(t))}{dt}.
\end{align*}
%
Since $w(0) = 0$, the above
equation reduces to:
\begin{align*}
	\dot{V} =& -\frac{1}{2} \int_{0}^{L(t)} (\rho - p^*\circ X)
        \left[(\rho + p^*\circ X) (\partial_x v) + \frac{\partial_X
            p^*}{\rho}  
          \partial_x \left( \frac{\partial_x X}{\rho} \right) \right]~dx \\
        & - \int_{0}^{L(t)} \frac{|\partial_x w|^2}{\rho}~dx +  w(L(t)) \left( \frac{d}{dt} w(L(t)) + \frac{\partial_x w}{\rho} \right) .
\end{align*}
From~\eqref{distributed_control_law} and~\eqref{boundary_control_law},
we have $\partial_x v = (\rho - p^* \circ X) - \frac{\partial_X
  p^*}{\rho(\rho + p^* \circ X)}\partial_x \left( \frac{\partial_x
    X}{\rho} \right)$, and $\frac{dw}{dt}\bigg|_{L(t)} =
- \left( \frac{\partial_x w}{\rho} + k w \right)\bigg|_{L(t)}$, and we
obtain:
\small
\begin{align}
\begin{aligned}
  \dot{V} = &-\frac{1}{2} \int_{0}^{L(t)} (\rho + p^*\circ X)|\rho -
  p^*\circ X|^2 dx - \int_{0}^{L(t)} \frac{|\partial_x w|^2}{\rho}~dx 
    - k \left| w(L(t)) \right|^2.
    \end{aligned}
    \label{eq:dotV_1D}
\end{align}
\normalsize
Clearly, $\dot{V} \leq 0$, and~$\rho(t,\cdot), w(t,.) \in H^1((0,
1/d_l))$, for all~$t$. By Lemma~\ref{lemma:RKCT}, the
Rellich-Kondrachov Compactness Theorem, the space $H^1((0, 1/d_l))$ 
is compactly contained in~$L^2((0, 1/d_l))$, and the bounded
solutions (by Assumption~\ref{as:wellp}) in $H^1((0, 1/d_l))$ are then
precompact in~$L^2((0, 1/d_l))$. Moreover, the set of $( \rho, X )$ satisfying
Assumption~\ref{as:wellp} is dense in~$L^2((0, 1/d_l))$.
 Then, by the LaSalle Invariance Principle,
Lemma~\ref{lemma:LaSalle_inv}, we have that the solutions to
\eqref{eq:1D_system_dynamics} converge in the~$L^2$-norm to the
largest invariant subset of~$\dot{V}^{-1} ( 0 )$. This implies that:
\begin{align*}
  &\lim_{t \rightarrow \infty}
  \| \rho(t, \cdot) - p^* \circ X(t,\cdot) \|_{L^2((0,L(t)))} = 0, \\
  &\lim_{t \rightarrow \infty} \| \frac{\partial_x w}{\rho}
  \|_{L^2((0,L(t)), \rho)}
  = 0, \hspace{0.2in}   \lim_{t \rightarrow \infty} w(t,L(t)) = 0.
\end{align*}
Thus, we have:
\begin{align*}
	\lim_{t \rightarrow \infty} \left \| \frac{\partial_x w}{\rho} \right \|_{L^2((0,L(t)), \rho)} = 0 \hspace{0.2in} \Rightarrow & \lim_{t \rightarrow \infty} \| \partial_x w \|_{L^2((0,L(t)))} = 0.
\end{align*}
Using the Poincar\'e-Wirtinger inequality,
Lemma~\ref{lemma:poincare_wirtinger}, again, we note that this implies
$ \lim_{t \rightarrow \infty} \| w - \int_0^{L(t)} w
\|_{L^2((0,L(t)))} = 0$.  We have $\lim_{t \rightarrow \infty} |
\int_0^{L(t)} w | = | \int_0^{L(t)} \int_0^x \partial_x w | \leq
L(t)^{3/2} \| \partial_x w\|_{L^2((0,L(t)))} = 0$, which implies that
$\lim_{t \rightarrow \infty} \int_0^{L(t)} w = 0$ and therefore $
\lim_{t \rightarrow \infty} \| w \|_{L^2((0,L(t)))} = 0$.  Thus, we
get $\lim_{t \rightarrow \infty} \| w(t,\cdot) \|_{H^1((0,L(t)))} =
0$, or in other words, $w \rightarrow_{H^1} 0$. Now, $\lim_{t \rightarrow \infty} |w(t,x)| = \lim_{t
  \rightarrow \infty} |w(t,0) + \int_0^x \partial_x w(t,\cdot)| \leq
\lim_{t \rightarrow \infty} |w(t,0)| + \int_0^x |\partial_x
w(t,\cdot)| \leq \lim_{t \rightarrow \infty} |w(t,0)| + \sqrt{L(t)} \|
w(t,\cdot) \|_{H^1((0,L(t)))} = 0$, which implies that $w \rightarrow 0$
pointwise.  Given that~$w = X - \Theta$, we have~$\lim_{t \rightarrow
  \infty} X(t,\cdot) - \Theta(t, \cdot) = 0$.  Let~$\lim_{t
  \rightarrow \infty} L(t) = L$ and $\lim_{t \rightarrow \infty}
\Theta(t, \cdot) = \bar{\Theta}(\cdot)$, which implies that
$X \rightarrow \bar{\Theta}$ pointwise.

From the above, we have $\lim_{t \rightarrow \infty} \| \rho(t,
\cdot) - p^* \circ \bar{\Theta} \|_{L^2((0,L(t)))} = \lim_{t
  \rightarrow \infty} \| \rho(t, \cdot) - p^* \circ X(t, \cdot) + p^*
\circ X(t, \cdot) - p^* \circ \bar{\Theta} \|_{L^2((0,L(t)))} \leq
\lim_{t \rightarrow \infty} \| \rho(t, \cdot) - p^* \circ X(t, \cdot)
\|_{L^2((0,L(t)))} + \| p^* \circ X(t, \cdot) - p^* \circ \bar{\Theta}
\|_{L^2((0,L(t)))} = 0$ (this follows from the assumption that~$p^*$
is Lipschitz, since~$\| p^* \circ X - p^* \circ \bar{\Theta} \|_{L^2}
\leq c \| X - \bar{\Theta} \|_{L^2}$ for some Lipschitz
constant~$c$). Thus, we have $\rho \rightarrow_{L^2} p^* \circ
\bar{\Theta}$.

Now, we are interested in the limit density distribution $\bar{\rho} =
p^* \circ \bar{\Theta}$, and by the definition of~$\bar{\Theta}$ we
have~$\bar{\Theta}(x) = \int_0^x \bar{\rho}$.  We now prove that this
limit $(\bar{\rho}, \bar{\Theta})$ is unique, and that $(\bar{\rho},
\bar{\Theta}) = (\rho^*, \Theta^*)$.  From the definition of
$\bar{\Theta}$, we get $ \frac{d\bar{\Theta}}{dx}(x) = \bar{\rho}(x) =
p^*(\bar{\Theta}(x))>0$, $\forall \bar{\Theta}(x) \in [0,1]$. We
therefore have:
\begin{align*}
	x = \int_0^{\bar{\Theta}(x)} \left( p^*(\theta) \right)^{-1} d\theta.
\end{align*}
Recall from the definition of $p^*$ and \eqref{eq:theta} that $p^*
\circ \Theta^*(x) = \rho^*(x)$, and $\frac{d}{dx}\Theta^*(x) = \rho^*(x) = p^*
\circ \Theta^*(x)$, which implies that $ \frac{d\Theta^*}{dx} =
p^*(\theta^*)>0$, where $\theta^* = \Theta^*(x)$. Therefore:
\begin{align*}
	x = \int_0^{\Theta^*(x)} \left( p^*(\theta) \right)^{-1} d\theta.
\end{align*}
From the above two equations, we get:
\begin{align*}
  \int_0^{\bar{\Theta}(x)} \left( {p^*}(\theta)\right)^{-1} d\theta =
  \int_0^{\Theta^*(x)} \left( {p^*}(\theta)\right)^{-1} d\theta,
\end{align*}
for all~$x$, and since $p^*$ is strictly positive, it implies that
$\bar{\Theta} = \Theta^*$, and we obtain $\bar{\rho} = p^*\circ
\bar{\Theta} = p^* \circ \Theta^* = \rho^*$.
And we know that
$\rho \rightarrow_{L^2} p^* \circ \bar{\Theta} = p^* \circ \Theta^* =
\rho^*$.  In other words,~$\rho$ converges to~$\rho^*$ in the $L^2$
norm.  
\end{proof}
\subsubsection{Physical interpretation of the density control law}
For a physical interpretation of the control law, we first rewrite
some of the terms in a suitable form.  
From~\eqref{eq:1D_system_dynamics}, we know that:
\begin{align*}
  \frac{1}{\rho}\partial_x \left( \frac{\partial_x X}{\rho} \right) =
  \frac{\partial X}{\partial t} + v \partial_x X =\frac{dX}{dt}.
\end{align*}
The second term in the expression for $\partial_x v$ in the
law~\eqref{distributed_control_law} can thus be rewritten as:
\begin{align*}
  \frac{\partial_X p^*}{\rho (\rho + p^* \circ X)} \partial_x \left(
    \frac{\partial_x X}{\rho} \right) = \frac{1}{(\rho + p^* \circ
    X)}~ \partial_X p^* \frac{dX}{dt} = \frac{1}{(\rho + p^* \circ X)}
  \frac{d p^*}{dt}.
\end{align*}   
Now, from above and~\eqref{distributed_control_law}, we obtain:
\begin{align}
  v(t,x) = \int_0^x (\rho - p^* \circ X) - \int_0^x \frac{1}{(\rho +
    p^* \circ X)} \frac{d p^*}{dt}.
	\label{eq:velocity}
\end{align}
Equation~\eqref{eq:velocity} gives the velocity of the agent at~$x$ at
time~$t$.  Now, to interpret it, we first consider the case where the
pseudo-localization error is zero, that is, when $X = \Theta^*$.  This
would imply that $p^* \circ X = p^* \circ \Theta^* = \rho^*$,
$\frac{dX}{dt} = \frac{d\Theta^*}{dt} = 0$, and we obtain:
\begin{align}
	v(t,x) = \int_0^x (\rho - \rho^*).
	\label{eq:velocity_ideal}
\end{align}
The term $\int_0^x (\rho - \rho^*) = \int_0^x \rho - \int_0^x \rho^*$
is the difference between the number of agents in the interval $[0,x]$
and the desired number of agents in $[0,x]$. If the term is positive,
it implies that there are more than the desired number of agents in
$[0,x]$ and the control law essentially exerts a pressure on the agent
to move right thereby trying to reduce the concentration of agents in
the interval $[0,x]$, and, vice versa, when the term is negative.  This
eventually accomplishes the desired distribution of agents over a
given interval.  This would be the physical interpretation of the
control law for the case where the pseudo-localization error is zero
(that is, the agents have full information of their positions).

However, in the transient case when the agents do not possess full
information of their positions and are implementing the
pseudo-localization algorithm for that purpose, the control law
requires a correction term that accounts for the fact that the
transient pseudo coordinates~$X(t,x)$ cannot be completely relied
upon.  This is what the second term $ \int_0^x \frac{1}{(\rho + p^*
  \circ X)} \frac{d p^*}{dt}$ in~\eqref{eq:velocity} corrects
for. When this term is positive, that is, $\int_0^x \frac{1}{(\rho +
  p^* \circ X)} \frac{d p^*}{dt} > 0$, it roughly implies that the
``estimate" of the desired number of agents in the interval~$[0,x]$ is
increasing (indicating that an increase in the concentration of agents
in $[0,x]$ is desirable), and the term essentially reduces the
``rightward pressure" on the agent (note that this term will have a negative
contribution to the velocity~\eqref{eq:velocity}).
\subsection{Discrete implementation}
In this section, we present a scheme to compute~$p^*$ (the transformed
desired density profile) and a consistent discretization scheme for
the distributed control law. We follow that up with a discussion on
the convergence of the discretized system and a pseudo-code for the
implementation.
\subsubsection{On the computation of~$p^*$}
\label{sec:computingpstar_1D}
We now provide a method for computing $p^*$ from a given
$\rho^*$ via interpolation. Let the desired domain $M^* = [0,L^*]$ be
discretized uniformly to obtain $M^*_d = \lbrace 0= x_1, \ldots, x_m =
L^* \rbrace$ such that $x_j - x_{j-1} = h$ (constant step-size).  Note
that $m$ is the number of interpolation points, not equal to the
number of agents. The desired density~$\rho^* : [0,L^*] \rightarrow
\realpositive$ is known, and we compute the value of~$\rho^*$
on~$M^*_d$ to get $\rho^*(x_1, \ldots, x_m) = (\rho^*_1, \ldots,
\rho^*_m)$.  We also have $\Theta^*(x) = \int_0^x \rho^* d\mu$, for
all $x \in [0,L^*]$.  Now, computing the integral with respect to the
Dirac measure for the set~$M^*_d$,
we obtain $\Theta_d^*(x_1, \ldots, x_m) = (\theta^*_1, \ldots,
\theta^*_m)$, where $\theta^*_1 = 0$ and $\theta^*_k =
\frac{1}{2}\sum_{j=1}^{k} (\rho^*_{j-1} + \rho^*_j)h$, 
for $k=2, \ldots, m$ (note that $0 =
\theta^*_1 \leq \theta^*_2 \leq \ldots \leq \theta^*_m \leq 1$ and
$\lim_{h \rightarrow 0} \theta^*_m = \Theta^*(L^*) = 1$).  Now, the
value of the function~$p^*$ at any~$X \in [0,1]$ can be now obtained
from the relation $p^*(\theta^*_k) = \rho^*_k$, for $k = 1, \ldots,
m$, by an appropriate interpolation.
  \begin{figure}[H]
  \centering
  	\begin{tikzcd}
          &(\rho^*_1, \ldots, \rho^*_m)
          = p^*(\theta^*_1, \ldots, \theta^*_m) \\
          (x_1, \ldots, x_m) \arrow[ur, mapsto, shift left, end anchor
          = {[xshift = -1.5ex]}, "\rho^*"] \arrow[r, mapsto, end
          anchor = {[xshift = 6ex]}, "\Theta^*"] & \hspace{1cm}
          (\theta^*_1, \ldots, \theta^*_m) \arrow[u, Mapsto, shift
          right = 4ex, "p^*"]
  	\end{tikzcd}
  \end{figure}

\subsubsection{Discrete control law}
A discretized pseudo-localization algorithm is given
by~\eqref{pseudoloc_discrete}.  We now discretize
\eqref{distributed_control_law} to obtain an implementable control law
for a finite number of agents $i \in \Sc$, and a numerical simulation
of this law is later presented in Section~\ref{sec:simulation}.

Let $i \in \Sc\setminus \{l,r\}$.  First note that $\partial_x v =
(\partial_{\theta} v) \bigg|_{\theta = \Theta(x)} (\partial_x \Theta)
= (\partial_{\theta} v) \bigg|_{\theta = \Theta(x)} \rho$ (where~$v
\equiv v(\Theta(x))$). Using a consistent backward differencing
approximation, and recalling that $\Delta \theta = \epsilon$, we can
write:
\begin{align*}
  (\partial_x v)_i \approx \rho_i \frac{v_{i} - v_{i-1}}{ \Delta \theta} = \rho_i
  \frac{v_{i} - v_{i-1}}{\epsilon}, \quad i \in \Sc
\end{align*}
where $\rho_i$ is agent~$i$'s density measurement.

From Section~\ref{sec:pseudoloc_1D}, recall the consistent
finite-difference approximation:
\begin{align*}
	\frac{1}{\rho}
{\partial_x}\left(\frac{\partial_xX}{\rho}\right)_i
\approx \frac{1}{\epsilon^2} (X_{i-1} - 2 X_i +
  X_{i+1}).
\end{align*} 
   With $\kappa = \frac{1}{2
  \epsilon}$, from \eqref{distributed_control_law} and the above
equation, we obtain the law for agent~$i$ as:
\small
\begin{align}
	\begin{aligned}
          v_i = v_{i-1} + \frac{\rho_i - p^*(X_i)}{2 \kappa \rho_i} -
          \frac{2 \kappa}{\rho_i(\rho_i + p^*(X_i))}
           \cdot \frac{p^*(X_{i+1}) - p^*(X_{i-1})}{X_{i+1} -
              X_{i-1}} \cdot (X_{i-1} - 2 X_i +
  X_{i+1}) 
	\end{aligned}
	\label{eq:disc_vel_update}
\end{align}
\normalsize
with~$v_l = 0$. The computation in $v$ can be implemented by
  propagating from the leftmost agent to the
rightmost agent along a line graph~$\mathcal{G}_{line}$ (with message receipt
acknowledgment). Note that this propagation can alternatively be
formulated by each agent averaging appropriate variables with left and
right neighbors, which will result in a process similar to a finite-time
consensus algorithm.
Now, the boundary control~\eqref{boundary_control_law} is discretized
(with~$\partial_t \beta \approx \frac{\beta(t+1) - \beta(t)}{\Delta
  t}$), with the choice~$k = \frac{1}{\epsilon}$ to:
 \small
\begin{align}
\begin{aligned}
  \beta(t+1) &= \beta(t) + k\Delta t (2 - \beta(t) - 2 \kappa \left(
    \beta(t) - X_{r-1}(t))
  \right) 
  = \frac{4 - 2 \epsilon}{3}\beta(t) + \frac{1}{3} X_{r-1}(t) 
\end{aligned}
\label{eq:disc_bdry_ctrl}
\end{align}
\normalsize
\subsubsection{On the convergence of the discrete system}
The discretized pseudo-localization
algorithm~\eqref{pseudoloc_discrete} with the boundary control
law~\eqref{boundary_control_law}, can be rewritten as:
\begin{align}
	X(t+1) = X(t) - \frac{1}{3} L X(t) + u(t),
	\label{eq:disc_pseudoloc_vectorform}
      \end{align} where~$X(t) = (X_l(t), \ldots, X_r(t))$, $L$ is the
      Laplacian of the line graph~$\mathcal{G}_{line}$ and the input
      $u(t) = \left( 0, \ldots, 0, \frac{\epsilon}{3} (2 - \beta(t) )
      \right)$. This discretized system is stable and we thereby have
      that the discretized pseudo-localization algorithm is consistent
      and stable.  Thus, by the Lax Equivalence Theorem~\cite{GDS:85},
      the solution of~\eqref{eq:disc_pseudoloc_vectorform} converges
      to the solution of~\eqref{pseudoloc_cont_dynamic} with the
      boundary control~\eqref{boundary_control_law} as~$N \rightarrow
      \infty$.
      Due to the nonlinear nature of the discrete implementation of
      the equation in $\rho$, we are only certain that we have a
      consistent discrete implementation in this case (no similar
      convergence theorem exists for discrete approximations of
      nonlinear PDEs.)
\begin{algorithm}[H]
\footnotesize
\label{alg:1D_implementation}
\begin{algorithmic}[1]
\caption{Self-organization algorithm for 1D environments}
	\State \textbf{Input:} $\rho^*$, $K$ (number of iterations), $\Delta t$ (time step)
        \State \textbf{Requires:} 
	\State $\quad$Offline computation of~$p^*$ as outlined in Section~\ref{sec:computingpstar_1D}
	\State $\quad$Initialization $X_i (0) = {X_0}_i$, $v_i = 0$ 
        \State $\quad$Leftmost and rightmost agents, $l$, $r$, resp., are
        aware they are at boundary
	\For{$k:=1$ to $K$} 
		\If{$i = l$ }
		\State agent $l$ holds onto $X_l(k) = 0$ and $v_l(k) = 0$
		\ElsIf{agent $i \in \lbrace l+1 , \ldots, r-1 \rbrace$ }
		\State agent $i$ receives $X_{i-1}(k)$ and $X_{i+1}(k)$ from its left and right neighbors
		\State agent $i$ implements the update~\eqref{pseudoloc_discrete} 
		\ElsIf{$i = r$}
		\State agent $r$ receives $X_{r-1}(k)$ from its left neighbor
		\State agent~$r$ implements the update~\eqref{eq:disc_bdry_ctrl}
		\EndIf
			\For{$i:= l$ to $r$}
				\State agent~$i$ computes velocity $v_i$ from \eqref{eq:disc_vel_update} 
			\EndFor
			\State agent~$i$ moves to 
                           $x_i(k+1) = x_i(k) + v_i(k) \Delta t$
	\EndFor
\end{algorithmic}
\end{algorithm}
%
\section{Self-organization in two dimensions}
\label{sec:self-organization in 2D}
In this section, we present the two-dimensional self-organization
problem. Although our approach to the ~$2$D problem is fundamentally
similar to the ~$1$D case, we encounter a problem in the
two-dimensional case that did not require consideration in one
dimension, and it is the need to control the shape of the spatial
domain in which the agents are distributed. We overcome this problem
by controlling the shape of the domain with the agents on the
boundary, while controlling the density distribution of the agents in
the interior.

Let $M : \real \rightrightarrows \real^2$ be a smooth one-parameter
family of bounded open subsets of~$\real^2$, such that~$\bar{M}(t)$ is
the spatial domain in which the agents are distributed at time~$t \geq
0$. Let $\rho : \real \times \real^2 \rightarrow \real_{\geq 0}$ be
the spatial density function with support~$\bar{M}(t)$ for all $t \geq
0$; that is, $\rho(t,x) > 0$, $\forall\,x \in \bar{M}(t)$, and $t \ge
0$. Without loss of generality, we shift the origin to a point on the
boundary of the family of domains, such that $(0,0) \in \partial
M(t)$, for all~$t$. Let $\rho^*: M^* \rightarrow \realpositive$ be the
desired density distribution, where~$M^*$ is the target spatial
domain. From here on, we view~$\bar{M}$ as a one-parameter family of
compact $2$-submanifolds with boundary of~$\real^2$. Just as in the
$1$D case, the agents do no have access to their positions but know
the true~$x$- and~$y$-directions.

In what follows we present our strategy to solve this problem, which
we divide into three stages for simplicity of presentation and
analysis. In the first stage, the agents converge to the target
spatial domain~$M^*$ with the boundary agents controlling the shape of
the domain. In stage two, the agents implement the pseudo-localization
algorithm to compute the coordinate transformation. In the third
stage, the boundary agents remain stationary and the agents in the
interior converge to the desired density distribution. This
simplification is performed under the assumption that, once the agents
have localized themselves at a given time, they can accurately update
this information by integrating their (noiseless) velocity inputs.
Noisy measurements would require that these phases are rerun with some
frequency; e.g.~using fast and slow time scales as described in
Section~\ref{sec:conceptual}.

\subsection{Pseudo-localization algorithm for boundary agents}
\label{sec:boundary_localization}

To begin with, we propose a pseudo-localization algorithm for the
boundary agents which allows for their control in the first stage.  To
do this, we assume that the agents have a boundary detection
capability (can approximate the normal to the boundary), the ability
to communicate with neighbors immediately on either side along the
boundary curve, and can measure the density of boundary agents.

Let $M_0 \subset \real^2$ be a compact $2$-manifold with boundary
$\partial M_0$ and let $(0,0) \in \partial M_0$.  To localize
themselves, the agents on $\partial M_0$ implement the
distributed~$1$D pseudo-localization algorithm presented in
Section~\ref{sec:pseudoloc_1D}.  This yields a parametrization of the
boundary $\Gamma: \partial M_0 \rightarrow [0,1)$, with $\Gamma(0,0) =
0$, such that the closed curve which is the boundary~$\partial M_0$ is
identified with the interval $[0,1)$.
We have that, for $\gamma \in [0,1)$, $\Gamma^{-1}(\gamma)
\in \partial M_0$. For $\gamma \in [0,1)$, let $s(\gamma)$ be the arc
length of the curve $\partial M_0$ from the origin, such that $s(0) =
0$ and $\lim_{\gamma \rightarrow 1} s(\gamma) = l$. We assume that the
boundary agents have access to the unit outward normal
$\mathbf{n}(\gamma)$ to the boundary, and thus the unit tangent
$\mathbf{s}(\gamma)$.

Let $q: [0,l) \rightarrow \realpositive$ denote the normalized density
of agents on the boundary, such that we have $\int_0^l q(s) ds =
1$. Now the 1D pseudo-localization algorithm of
Section~\ref{sec:pseudoloc_1D} serves to provide a 2D boundary
pseudo-localization as follows. Note that $\frac{ds}{d \gamma} =
\frac{1}{q(\gamma)}$, and $(dx,dy) = \mathbf{s} ds$, which implies $
(dx,dy) = \frac{1}{q(\gamma)} \mathbf{s}(\gamma) d \gamma$. Therefore,
we get the position of the boundary agent at $\gamma$,
$(x(\gamma),y(\gamma))$, as $(x(\gamma),y(\gamma)) = \int_{0}^{\gamma}
\frac{1}{q(\bar{\gamma})} \mathbf{s}(\bar{\gamma}) d \bar{\gamma}$,
and the arc-length $s(\gamma) = \int_{0}^{\gamma}
\frac{1}{q(\bar{\gamma})} d \bar{\gamma}$, which is discretized by a
consistent scheme to obtain:
\begin{align}
  (x_i, y_i) = \frac{1}{2} \Delta \gamma \sum_{k=0}^{i-1} \left(
    \frac{\mathbf{s}_{k}}{q_{k}} + \frac{\mathbf{s}_{k+1}}{q_{k+1}}
  \right), \qquad \text{for}~i \in \partial M_0,
	\label{eq:2D_boundary_pseudoloc_ss}
\end{align}
 and we recall that the agents have access to $q$ and
$\mathbf{s}$. 
The computation of $(x_i, y_i)$ can be implemented by 
  propagating from the agent with $\gamma_i = 0$ along the boundary
 agents in the direction as $\gamma_i \rightarrow 1$, 
   along a line graph~$\subscr{\mathcal{G}}{line}$ (with message receipt
acknowledgment). Note that this propagation can alternatively be
formulated by each agent averaging appropriate variables with left and
right neighbors, which will result in a process similar to a finite-time
consensus algorithm.

This way, the boundary agents are localized at time
$t=0$, and they update their position estimates using their
velocities, for $t \geq 0$.

\subsection{Pseudo-localization algorithm in two dimensions}
\label{sec:2D_pseudoloc}

In this subsection, we present the pseudo-localization algorithm for
the agents in the interior of the spatial domain. We first describe
the idea of the coordinate transformation (diffeomorphism) we employ
and construct a PDE that converges asymptotically to this
diffeomorphism. We then discretize the PDE to obtain the distributed
pseudo-localization algorithm.

The main idea is to employ harmonic maps to construct a coordinate
transformation or diffeomorphism from the spatial domain of the swarm
onto the unit disk. We begin the construction with the static case,
where the agents are stationary. Let $M \subseteq \real^2$ be a
compact, static $2$-manifold with boundary and $N = \{ (x,y) \in
\real^2\,|\, (x-1)^2 + y^2 \leq 1 \}$ be the unit disk. The manifolds
$M$ and $N$ are both equipped with a Euclidean metric $g = h = \delta$.

First, we define a mapping for the boundary of $M$. Let
$\Gamma: \partial M \rightarrow [0,1)$ be a parametrization of the
boundary of~$M$, as outlined in
Section~\ref{sec:boundary_localization}. Let $\xi : \bar{M}
\rightarrow N$ be any diffeomorphism that takes the following form on
the boundary of~$M$:
\begin{align}
  \xi(\Gamma^{-1}(\gamma)) = (1-\cos(2 \pi \gamma), \sin(2 \pi
  \gamma)), \qquad  \gamma \in [0,1),
  \label{eq:boundary_maponto_circle}
\end{align} 
and we know that $\Gamma^{-1}[0,1) = \partial M$. 

Now, from Lemma~\ref{lemma:harmonic_diffeomorphism}, on harmonic
diffeomorphisms, there is a unique harmonic diffeomorphism,
$\Psi : M \rightarrow N$, such that $\Psi = \xi$ on
$\partial M$. We know that, by definition, the mapping $\Psi =
(\psi_1, \psi_2)$ satisfies:
\begin{align}
	\begin{aligned}
	\begin{cases} 
          \Delta \psi_1 = 0, \\
          \Delta \psi_2 = 0,
    \end{cases}
    &\text{for } \mathbf{r} \in \mathring{M}, \\
    \Psi = \xi, \hspace{0.15in} &\text{on } \partial M,
	\end{aligned}
    \label{eq:harmonic_diffeo}
\end{align} 
where $\Delta$ is the Laplace operator. Let~$\Psi^*$
 be the corresponding map from the target domain~$M^*$ to the unit disk~$N$. 
 Now, we define a
function $p^*: N \rightarrow \real_{>0}$ by $p^* = \rho^* \circ
({\Psi^*})^{-1}$, the image of the desired spatial density
distribution on the unit disk, which is computed offline and is
broadcasted to the agents prior to the beginning of the
self-organization process. We later use~$p^*$ to derive the
distributed control law which the agents implement.
  \begin{figure}[H]
  \centering
  	\begin{tikzcd}
  		&\rho^*(\mathbf{r}) = p^*(\Psi^*(\mathbf{r})) \\
  		\mathbf{r} \in M^* \arrow[ur, mapsto, shift left, end anchor = {[xshift = -1.5ex]}, "\rho^*"] \arrow[r, mapsto, end anchor = {[xshift = 6ex]}, "\Psi^*"]  & \hspace{1cm}  \Psi^*(\mathbf{r}) \in N \arrow[u, mapsto, shift right = 4ex, "p^*"]
  	\end{tikzcd}
  \end{figure}
We now construct a PDE that asymptotically converges to the harmonic
diffeomorphism, which we then discretize to obtain a distributed
pseudo-localization algorithm. We use the heat flow equation as the
basis to define the pseudo-localization algorithm, which yields a
harmonic map as its asymptotically stable steady-state solution.  We
begin by setting up the system for a stationary swarm, for which the
spatial domain is fixed.

Let $M \subset \real^2$ be a compact $2$-manifold with boundary, $N$
be the unit disk of $\real^2$, and $\mathbf{R} = (X,Y): M \rightarrow
N$. The heat flow
equation is given by:
\begin{align}
	\begin{aligned}
	\begin{cases} 
		 \partial_t X = \Delta X ,\\
		 \partial_t Y = \Delta Y ,
    \end{cases}
    &\text{for } \mathbf{r} \in \mathring{M}, \\
    \mathbf{R} = \xi, \hspace{0.15in} &\text{on } \partial M.
	\end{aligned}
        \label{eq:heat_flow}
\end{align}
The heat flow equation has been studied extensively in the
literature. For well-known existence and uniqueness results, we refer
the reader to~\cite{JE-LL:81}.

\begin{lemma}
\label{lemma:stage2_lemma}\longthmtitle{Pointwise convergence of the heat
  flow equation to a
  harmonic diffeomorphism}
The solutions of the heat flow equation~\eqref{eq:heat_flow} converge
pointwise to the harmonic map satisfying~\eqref{eq:harmonic_diffeo}, 
exponentially as $t \rightarrow \infty$, from any smooth
initial~$\mathbf{R}_0 \in H^1(M) \times H^1(M)$.
\end{lemma}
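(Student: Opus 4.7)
The plan is to introduce the error $\mathbf{w}(t, \mathbf{r}) = \mathbf{R}(t, \mathbf{r}) - \Psi(\mathbf{r})$, where $\Psi = (\psi_1, \psi_2)$ is the unique harmonic diffeomorphism from Lemma~\ref{lemma:harmonic_diffeomorphism} satisfying~\eqref{eq:harmonic_diffeo}. Since $\Psi$ is time-independent and harmonic ($\Delta \psi_i = 0$) while $\mathbf{R} = \xi = \Psi$ on $\partial M$, each component of $\mathbf{w}$ satisfies the homogeneous heat equation with zero Dirichlet data,
\[
\partial_t w_i = \Delta w_i \text{ in } \mathring{M}, \qquad w_i|_{\partial M} = 0, \qquad i = 1, 2.
\]
It therefore suffices to prove pointwise exponential decay of $\mathbf{w}$ to $0$, and the harmonicity of $\Psi$ is used only to reduce to this clean linear subproblem.

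Next I would introduce the Lyapunov functional $V(t) = \tfrac{1}{2} \int_M (|w_1|^2 + |w_2|^2)\, d\mu$, differentiate along the dynamics, and apply the Divergence Theorem (Lemma~\ref{le:dt}) to move one gradient off each $w_i$. The resulting boundary terms vanish because $w_i|_{\partial M} = 0$, yielding
\[
\dot V = \sum_{i=1}^{2} \int_M w_i \Delta w_i\, d\mu = -\sum_{i=1}^{2} \int_M |\nabla w_i|^2\, d\mu.
\]
Because each $w_i$ vanishes on $\partial M$, the zero-trace Poincar\'e inequality (the $W^{1,2}_0(M)$ variant closely related to Lemma~\ref{lemma:poincare_wirtinger}) gives $\|w_i\|_{L^2(M)}^2 \leq C_M \|\nabla w_i\|_{L^2(M)}^2$ for a constant $C_M$ depending only on $M$. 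Hence $\dot V \leq -(2/C_M)\,V$, so $V(t) \leq V(0)\,e^{-2 t / C_M}$, which establishes exponential decay of $\mathbf{w}$ in the $L^2$-norm.

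The main obstacle is upgrading $L^2$ exponential decay to pointwise exponential decay, since in dimension two the Sobolev embedding $H^1(M) \hookrightarrow L^\infty(M)$ fails and so one cannot simply read off a uniform bound from the $L^2$ estimate. To overcome this I would invoke the spectral representation of the Dirichlet heat semigroup on $M$: let $\{\phi_n\}_{n \geq 1}$ be a complete $L^2$-orthonormal basis of Dirichlet eigenfunctions of $-\Delta$ on $M$, with eigenvalues $0 < \lambda_1 \leq \lambda_2 \leq \cdots \to \infty$. Expanding $w_i(0, \cdot) = \sum_n c_n^{(i)} \phi_n$ yields $w_i(t, \mathbf{r}) = \sum_n c_n^{(i)} e^{-\lambda_n t} \phi_n(\mathbf{r})$, and the smoothing property of the heat kernel, combined with standard elliptic regularity estimates on the $\phi_n$, upgrades the exponential $L^2$ decay into a uniform (hence pointwise) decay at rate $\lambda_1$ after any arbitrarily short positive waiting time. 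An alternative route combines the maximum principle for the heat equation with the $L^2$ bound above to produce the same uniform decay. Either way, $\mathbf{R}(t, \mathbf{r}) \to \Psi(\mathbf{r})$ pointwise and exponentially fast from any smooth initial datum $\mathbf{R}_0 \in H^1(M) \times H^1(M)$, which is the desired conclusion.
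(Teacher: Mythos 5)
Your proposal is correct, and its decisive step coincides with the paper's entire argument: the paper also reduces to the error $\tilde{\mathbf{R}} = \mathbf{R} - \Psi$ satisfying the homogeneous heat equation with zero Dirichlet data, expands the initial error in the orthonormal basis of Dirichlet eigenfunctions $\{\phi_n\}$ of the Laplacian, writes the solution as $\sum_n c_n e^{-\lambda_n t}\phi_n$, and concludes from $\lambda_n > 0$. The differences are twofold. First, your preliminary Lyapunov computation ($\dot V = -\sum_i \int_M |\nabla w_i|^2$ plus the zero-trace Poincar\'e inequality) is sound but redundant: the spectral expansion you invoke afterwards already yields $L^2$ decay at rate $\lambda_1$, so the energy argument buys nothing the eigenfunction expansion does not. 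Second, and to your credit, you are more careful than the paper about the passage from $L^2$ decay to \emph{pointwise} decay: the paper simply observes that each term $c_n e^{-\lambda_n t}\phi_n(\mathbf{r})$ tends to zero and asserts pointwise convergence of the sum, whereas termwise decay alone does not control the series uniformly in $\mathbf{r}$; your appeal to heat-kernel smoothing together with elliptic regularity (or, alternatively, the parabolic maximum principle) is the missing justification for the uniform-in-space exponential rate. So the proposal is not merely equivalent to the paper's proof --- it patches a small rigor gap in it.
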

\begin{proof}
  Let~$\Psi$ be the solution
  to~\eqref{eq:harmonic_diffeo}, which is a harmonic map by
  definition. Let $\tilde{\mathbf{R}} = \mathbf{R} - \Psi$
  be the error where~$\mathbf{R} = (X,Y)$ is the solution to
  \eqref{eq:heat_flow}.  Subtracting \eqref{eq:harmonic_diffeo} from
  \eqref{eq:heat_flow}, we obtain:
\begin{align}
	\begin{aligned}
	\begin{cases} 
		 \partial_t X = \Delta X ,\\
		 \partial_t Y = \Delta Y ,
    \end{cases}
    &\text{for } \mathbf{r} \in \mathring{M}, \\
    \tilde{\mathbf{R}} = 0, \hspace{0.15in} &\text{on } \partial M.
	\end{aligned}
        \label{eq:heat_flow_error}
\end{align}
The Laplace operator~$\Delta$ with the Dirichlet boundary condition
in~\eqref{eq:heat_flow_error} is self-adjoint and has an infinite
sequence of eigenvalues $0 < \lambda_1 < \lambda_2 < \ldots$, with the
corresponding eigenfunctions $\lbrace \phi_i \rbrace_{i = 1}^{\infty}$ forming an
orthonormal  basis of $L^2(M)$ (where $\phi_i \in L^2(M)$ and $\Delta \phi_i = \lambda_i \phi_i$ for all~$i$, with $\phi_i=0$ on the boundary)
\cite{LCE:98}. Let the initial condition be $\tilde{X}_0 =
\sum_{i=1}^{\infty} a_i \phi_i$ and $\tilde{Y}_0 = \sum_{i=1}^{\infty}
b_i \phi_i$ (where~$a_i$ and~$b_i$ are constants for all~$i$). The solution to~\eqref{eq:heat_flow_error} is then given by $\tilde{X}(t,
\mathbf{r}) = \sum_{i=1}^{\infty} a_i e^{-\lambda_i t}
\phi_i(\mathbf{r})$ and $\tilde{Y}(t, \mathbf{r}) =
\sum_{i=1}^{\infty} b_i e^{-\lambda_i t}
\phi_i(\mathbf{r})$. Since
$\lambda_i > 0$, for all~$i$, we obtain $\lim_{t \rightarrow \infty}
\tilde{X}(t, \mathbf{r}) = 0$ and $\lim_{t \rightarrow \infty}
\tilde{Y}(t, \mathbf{r}) = 0$, for all $\mathbf{r} \in \bar{M}$.
Therefore, $ \lim_{t \rightarrow \infty} \mathbf{R}(t,\mathbf{r}) =
\Psi(\mathbf{r})$, for all $\mathbf{r} \in \bar{M}$, and the convergence is exponential.
\end{proof}

We now have a PDE that converges to the diffeomorphism given by
\eqref{eq:harmonic_diffeo} for the stationary case (agents in the
swarm are at rest). For the dynamic case, and to describe the
algorithm while the agents are in motion, we modify
\eqref{eq:heat_flow} as follows.  Let $\mathbf{R} = (X,Y): \real
\times \real^2 \rightarrow \real$. We are only interested in the
restriction to~$M(t)$, $\mathbf{R} |_{M(t)}$, at any time~$t$, so we
drop the restriction and just identify $\mathbf{R}\equiv
\mathbf{R}_{|_{M(t)}}$. Using the relation $\frac{dX}{dt} = \partial_t
X + \nabla X \cdot \mathbf{v}$, where $\mathbf{v}$ is a velocity
field, we obtain:
\begin{align}
	\begin{aligned}
	\begin{cases} 
		 \partial_t X = \Delta X - \nabla X \cdot \mathbf{v},\\
		 \partial_t Y = \Delta Y - \nabla Y \cdot \mathbf{v},
    \end{cases}
    &\text{for } \mathbf{r} \in \mathring{M}(t), \\
	\mathbf{R} = \xi, 
	\hspace{0.15in} &\text{on } \partial M(t).
	\end{aligned}
    \label{eq:heat_flow_dyn_swarm}
\end{align}
We now discretize \eqref{eq:heat_flow_dyn_swarm} to derive the
distributed pseudo-localization algorithm.  Now, we have $\rho: \real
\times \real^2 \rightarrow \real_{\geq 0}$ with support $M(t)$, the
density distribution of the swarm on the domain $M(t)$.
We view the swarm as a discrete approximation of the domain $M(t)$ with
density $\rho$, and the PDE
\eqref{eq:heat_flow_dyn_swarm} as approximated by a distributed
algorithm implemented by the swarm.

Here, we propose a candidate distributed algorithm, which would yield
the heat flow equation via a functional approximation.  Our candidate
algorithm is a time-varying weighted Laplacian-based distributed
algorithm, owing to the connection between the graph Laplacian and the
manifold Laplacian \cite{MB-PN:08}:
\begin{align}
  X_i(t+1) = X_i(t) + \sum_{j \in {\mathcal N}_i(t)}
  w_{ij}(t) (X_j (t) - X_i (t)),
	\label{eq:discrete_sum}
\end{align}
and a similar equation for~$Y$. We show how to derive next the values
for the weights $w_{ij}(t) \in \real$, for all $t$. First, the set of
neighbors, $j \in {\mathcal N}_i (t)$, of~$i$ at time~$t$, are the
spatial neighbors of~$i$ in $M(t)$, that is, ${\mathcal N}_i (t) = \{
j \in \Sc\,|\, \|\mathbf{r}_j (t) - \mathbf{r}_i(t)\| \le \epsilon \}
\equiv B_\epsilon(\mathbf{r}_i(t))$. Using $X_i(t+1) - X_i(t) =
\frac{d X}{d t} \delta t$, for a small $\delta t$, we make use of a
functional approximation of~\eqref{eq:discrete_sum}:
\begin{align}
  \frac{d X}{d t} \delta t =
  \int_{B_{\epsilon}(\mathbf{r}_i(t))} w(t, \mathbf{r}_i,\mathbf{s}) (
  X(t,\mathbf{s}) - X(t,\mathbf{r}_i) )~ \rho(t, \mathbf{s}) d\mu,
	\label{eq:continuum_sum}
\end{align}
where $d\nu = \rho~d\mu$ is a density-dependent measure on the
manifold, and the weighting function $w$ satisfies $w(t,
\mathbf{r}_i(t), \mathbf{r}_j(t)) = w_{ij}(t)$, for all $i,j\in
\Sc$. We note that the summation term in \eqref{eq:discrete_sum} is a
special form of the integral in \eqref{eq:continuum_sum} with a Dirac
measure $d\nu$ supported on the set $\lbrace \mathbf{r}_1(t), \ldots,
\mathbf{r}_N(t) \rbrace$ at time~$t$. Now, with the choice $w(t,
\mathbf{r}_i, \mathbf{s}) =
\frac{1}{\int_{B_{\epsilon}(\mathbf{s}(t))} \rho(t, \mathbf{\bar{s}})
  d\mu}$ and for very small $\epsilon$ (making
$\mathcal{O}(\epsilon^3)$ terms negligible), \eqref{eq:continuum_sum}
reduces to:
\begin{align*}
  \frac{d X}{d t} \delta t= a \Delta X,
\end{align*} 
where $a = \frac{1}{4 \epsilon} \int_{B_{\epsilon}(\mathbf{r}_i(t))}
(\mathbf{s} - \mathbf{r}_i(t)) \cdot (\mathbf{s} - \mathbf{r}_i(t))
d\mu$ is a constant. Now, with the choice $\delta t =
a$, we obtain:
\begin{align*}
  \frac{d X}{d t} = \frac{\partial X}{\partial t} + \mathbf{v} \cdot
  \nabla X = \Delta X,
\end{align*} 
which is the
PDE~\eqref{eq:heat_flow_dyn_swarm}. Let $d(t, \mathbf{r}_i(t)) =
\int_{B_{\epsilon}(\mathbf{r}_i(t))}\rho(t, \mathbf{s}) d \mu$ and
$d_i(t) = |\mathcal{N}_i (t)|$, for $i \in \Sc$. Substituting
$w_{ij}(t) = w(t, \mathbf{r}_i(t), \mathbf{r}_j(t)) =
\frac{1}{\int_{B_{\epsilon}(\mathbf{r}_j(t))} \rho(t,\mathbf{\bar{s}})
  d\mu} = \frac{1}{d(t, \mathbf{r}_j(t))} \approx \frac{1}{d_j(t)}$,
in~\eqref{eq:discrete_sum}, we get the distributed pseudo-localization
algorithm for the agents in the interior of the swarm to be:
\begin{align}
\begin{aligned}
  & X_i(t+1)
  = X_i(t) + \sum_{j \in {\mathcal N}_i(t)} \frac{1}{d_j(t)} (X_j(t) - X_i(t)), \\
  & Y_i(t+1) = Y_i(t) + \sum_{j \in {\mathcal N}_i(t)}
  \frac{1}{d_j(t)} (Y_j(t) - Y_i(t)).
\end{aligned}
\label{eq:discrete_pseudoloc_2D}
\end{align}
For the agents on the boundary~$\partial M(t)$, we have:
\begin{align*}
	\mathbf{R}_i = (X_i,Y_i) = \xi_i,
\end{align*}
where~$\xi_i = \xi(\mathbf{r}_i(t))$, for~$\mathbf{r}_i(t)
\in \partial M(t)$.  Note that the discretization scheme is
consistent, in that as the number of agents~$N \rightarrow \infty$,
the discrete equation~\eqref{eq:discrete_pseudoloc_2D} converges to
the PDE~\eqref{eq:heat_flow_dyn_swarm}.  In this way,
from~\eqref{eq:discrete_pseudoloc_2D}, the pseudo-localization
algorithm is a Laplacian-based distributed algorithm, with a
time-varying weighted graph Laplacian.

\subsection{Distributed density control law and analysis}
\label{sec:dist_control_2D}
In this section, we derive the distributed feedback control law to
converge to the desired density distribution over the target domain in
the two-dimensional case. The swarm dynamics are given by:
\begin{align}
	\begin{aligned}
          \partial_t \rho = - \nabla \cdot ( \rho \mathbf{v}),
          \hspace{0.15in} &\text{for } \mathbf{r} \in \mathring{M}(t), \\
          \partial_t \mathbf{r} = \mathbf{v}, \hspace{0.15in}
          &\text{on } \partial M(t).
	\end{aligned}
	\label{eq:2D_swarm}
\end{align}
\begin{assumption}\longthmtitle{Well-posedness of the PDE
    system}\label{as:wellposed2D}
  We assume that~\eqref{eq:2D_swarm} is well-posed, and that its
  solution~$\rho(t,\cdot)$ is sufficiently smooth and is bounded in the
  Sobolev space~$ H^1(\cup_t M(t))$, 
  the components of the velocity field~$\mathbf{v}$ are
  bounded in the Sobolev space~$ H^1(\cup_t M(t))$ and of the parametrized velocity on the boundary are bounded in 
  the Sobolev space~$H^1((0,1))$.
\end{assumption}
In what follows, we describe the control strategy based on three
different stages.

\subsubsection{Stage $1$}
In this stage, the objective is for the swarm to converge to the
target spatial domain $M^*$.

Let $\mathbf{r}^* : [0,1] \rightarrow \partial M^*$ be the closed
curve describing the desired boundary. Let $\mathbf{e}(\gamma) =
\mathbf{r}(\gamma) - \mathbf{r}^*(\gamma)$ be the position error of
agent~$\gamma$ on the boundary, where~$\mathbf{r}(\gamma)$ is the
actual position of agent~$\gamma$ computed as presented in
Section~\ref{sec:boundary_localization}. 
%
%
We define a distributed control law for swarm
motion as follows:
\begin{align}
	\begin{aligned}
		\begin{cases}
                  \mathbf{v} = - \frac{\nabla \rho}{\rho} , &\text{for } \mathbf{r} \in \mathring{M}(t), \\
           	  \partial_t \mathbf{v} = - \mathbf{e} - \mathbf{v},
                  \hspace{0.15in} &\text{on } \partial M(t).
		\end{cases}
	\end{aligned}
	\label{eq:stage1_vel}
\end{align}
\begin{theorem}\longthmtitle{Convergence to the desired spatial domain}
\label{thm:stage1_thm}
Under the well-posedness Assumption~\ref{as:wellposed2D}, the
domain~$M(t)$ of the system~\eqref{eq:2D_swarm}, with the distributed
control law~\eqref{eq:stage1_vel} converges to the target spatial
domain~$M^*$ as $t \rightarrow \infty$, from any initial domain~$M_0$
with smooth boundary.
\end{theorem}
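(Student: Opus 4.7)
The plan is to exploit the fact that the boundary dynamics in \eqref{eq:stage1_vel} decouples from the interior: each boundary agent, identified by its parameter $\gamma \in [0,1)$, obeys the damped second-order linear system $\partial_t \mathbf{r} = \mathbf{v}$ and $\partial_t \mathbf{v} = -\mathbf{e} - \mathbf{v}$ with the target equilibrium $\mathbf{r}^*(\gamma)$. This suggests a Lyapunov approach in the parametrized boundary variables alone, in the same spirit as the 1D analysis of Section~\ref{subsec:distributed control}.

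First I would introduce the Lyapunov functional
\begin{align*}
V(t) \;=\; \frac{1}{2}\int_0^1 |\mathbf{e}(t,\gamma)|^2\,d\gamma \;+\; \frac{1}{2}\int_0^1 |\mathbf{v}(t,\gamma)|^2\,d\gamma,
\end{align*}
where the parameter domain $[0,1]$ is fixed (each boundary agent retains its label $\gamma$, and $\partial_t \mathbf{e} = \partial_t \mathbf{r} = \mathbf{v}$). Differentiating in time and substituting the boundary equation in \eqref{eq:stage1_vel} gives
\begin{align*}
\dot{V}(t) \;=\; \int_0^1 \mathbf{e}\cdot\mathbf{v}\,d\gamma \;+\; \int_0^1 \mathbf{v}\cdot(-\mathbf{e}-\mathbf{v})\,d\gamma \;=\; -\int_0^1 |\mathbf{v}(t,\gamma)|^2\,d\gamma \;\le\; 0.
\end{align*}

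Next, I would invoke Assumption~\ref{as:wellposed2D}, which guarantees that the parametrized boundary velocity is bounded in $H^1((0,1))$; by the Rellich--Kondrachov Theorem (Lemma~\ref{lemma:RKCT}), the orbit $\{(\mathbf{e}(t,\cdot),\mathbf{v}(t,\cdot))\,|\,t\ge 0\}$ is precompact in $L^2((0,1))\times L^2((0,1))$. Applying the LaSalle Invariance Principle (Lemma~\ref{lemma:LaSalle_inv}) then forces the trajectory into the largest invariant subset of $\dot{V}^{-1}(0)=\{\mathbf{v}\equiv 0\}$. On such an invariant orbit, $\partial_t\mathbf{v}=0$ combined with $\partial_t\mathbf{v}=-\mathbf{e}-\mathbf{v}$ yields $\mathbf{e}\equiv 0$, i.e., $\mathbf{r}(t,\gamma)\to \mathbf{r}^*(\gamma)$ in $L^2((0,1))$.

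Finally, I would translate this $L^2$ convergence of the parametrized boundary curve (upgraded to uniform convergence by the $H^1$ regularity and Morrey-type embedding in one dimension) into convergence $\partial M(t)\to \partial M^*$, and then use simple connectedness of $M(t)$ together with continuity of the Hausdorff distance (see \eqref{eq:defn_Hausdorff}) to conclude $d_H(M(t),M^*)\to 0$. The main obstacle I anticipate is the precompactness step: one must rule out pathologies caused by the interior flow $\mathbf{v}=-\nabla\rho/\rho$ (which, when inserted into \eqref{eq:2D_swarm}, produces $\partial_t\rho=\Delta\rho$ in the interior), in particular that the evolving domain remains simply connected with a non-self-intersecting boundary and that the parametrized boundary stays in a bounded $H^1$ ball, so that the Lyapunov--LaSalle machinery applies; this is precisely what Assumption~\ref{as:wellposed2D} abstracts away.
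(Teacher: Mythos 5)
Your proposal is correct and follows essentially the same route as the paper's proof: the same Lyapunov functional $E=\frac{1}{2}\int|\mathbf{e}|^2+\frac{1}{2}\int|\mathbf{v}|^2$ on the boundary, the same dissipation identity $\dot{E}=-\int|\mathbf{v}|^2$, and the same combination of Assumption~\ref{as:wellposed2D}, the Rellich--Kondrachov theorem, and the LaSalle Invariance Principle. The only (welcome) refinements are that you integrate over the fixed parameter domain $[0,1)$ rather than the moving boundary $\partial M(t)$, which sidesteps the Leibniz correction term, and that you characterize the largest invariant subset directly via $\mathbf{v}\equiv 0 \Rightarrow \partial_t\mathbf{v}=0 \Rightarrow \mathbf{e}=0$, where the paper instead runs a longer Cauchy--Schwarz equality argument to reach the same conclusion.
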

\begin{proof}
  We consider an energy functional $E$ given by:
\begin{align*}
  E = \frac{1}{2} \int_{\partial M(t)} |\mathbf{e}|^2 + \frac{1}{2}
  \int_{\partial M(t)} |\mathbf{v}|^2 .
\end{align*}
Its time derivative, $\dot{E}$, using~\eqref{eq:stage1_vel}, is
given by:
\begin{align*}
  \dot{E} &= \int_{\partial M(t)} \mathbf{e} \cdot \mathbf{v} +
  \int_{\partial M(t)} \mathbf{v} \cdot \partial_t \mathbf{v} =
  \int_{\partial M(t)} (\mathbf{e} + \mathbf{v}) \cdot \partial_t
  \mathbf{v} = - \int_{\partial M(t)} |\mathbf{v}|^2.
\end{align*}
Clearly, $\dot{E} \leq 0$, and considering a parametrization of~$\partial M(t)$
by the interval~$[0,1)$, we have
~$\mathbf{v}(t,\cdot) \in
H^1((0,1))$ and bounded.  By Lemma~\ref{lemma:RKCT}, the
Rellich-Kondrachov Compactness theorem, $H^1((0,1))$ is
compactly contained in $L^2((0,1))$ (and we also have that $H^1((0,1))$ is
dense in $L^2((0,1))$). Thus, by the LaSalle
Invariance Principle, Lemma~\ref{lemma:LaSalle_inv}, we have that the
solutions to~\eqref{eq:2D_swarm} with the control
law~\eqref{eq:stage1_vel} converge in the~$L^2$-norm to the largest invariant subset
of~$\dot{E}^{-1} ( 0 )$, which satisfies:
\begin{align*}
  \lim_{t \rightarrow \infty} \| |\mathbf{v}| \|_{L^2(\partial M(t))} = 0, \hspace{0.2in}
  \lim_{t \rightarrow \infty} \partial_t \| |\mathbf{v}|
  \|_{L^2(\partial M(t))} = \lim_{t \rightarrow \infty} \int_{\partial
    M(t)} \mathbf{v} \cdot \partial_t \mathbf{v} = 0.
\end{align*}
The set $\dot{E}^{-1} ( 0 )$ is characterized by the first equality above
and the second equality is further satisfied by the invariant subset of $\dot{E}^{-1} ( 0 )$.
%
%
We know from \eqref{eq:stage1_vel} that $\partial_t \mathbf{v} = -
\mathbf{e} - \mathbf{v}$ on~$\partial M(t)$, which upon multiplying on
both sides by $\mathbf{v}$, integrating over~$\partial M(t)$ and
applying the previous equality on the integral of
$\mathbf{v}\cdot \partial_t \mathbf{v}$, yields $\lim_{t \rightarrow
  \infty} \int_{\partial M(t)} \mathbf{e} \cdot \mathbf{v} = 0$.  Now,
we have $|\partial_t \mathbf{v}|^2 = | \mathbf{e} |^2 + |\mathbf{v}
|^2 + 2 \mathbf{e} \cdot \mathbf{v}$, which on integrating over
$\partial M(t)$ yields $\lim_{t \rightarrow \infty} \| |\partial_t
\mathbf{\mathbf{v}}| \|_{L^2(\partial M(t))} = \lim_{t \rightarrow
  \infty} \| |\mathbf{e}| \|_{L^2(\partial M(t))}$.  By multiplying
$\partial_t \mathbf{v} = - \mathbf{e} - \mathbf{v}$ on both sides
by~$\partial_t \mathbf{v}$, integrating over $\partial M(t)$, and
using the Cauchy-Schwarz inequality, we obtain:
\begin{align*}
  \lim_{t \rightarrow \infty} \| |\partial_t\mathbf{ \mathbf{v}}|
  \|_{L^2(\partial M(t))}^2 &= \lim_{t \rightarrow \infty} -
  \int_{\partial M(t)} \mathbf{e} \cdot \partial_t \mathbf{v} \leq
  \lim_{t \rightarrow \infty} \int_{\partial
    M(t)} |\mathbf{e}| |\partial_t \mathbf{v}| \\
  &\leq \lim_{t \rightarrow \infty} \| |\mathbf{e}| \|_{L^2(\partial
    M(t))} \| |\partial_t\mathbf{ \mathbf{v}}| \|_{L^2(\partial M(t))}
  = \lim_{t \rightarrow \infty} \| |\partial_t \mathbf{\mathbf{v}}|
  \|_{L^2(\partial M(t))}^2
\end{align*}
In this way, the Cauchy-Schwarz inequality becomes an equality, which
implies that $\lim_{t \rightarrow \infty} \int_{\partial M(t)} \left[
  |\mathbf{e}| |\partial_t \mathbf{v}| - (-\mathbf{e})
  \cdot \partial_t \mathbf{v} \right] = 0$ (since the integrand is
non-negative and its integral is zero, it is zero almost everywhere),
thus $\lim_{t \rightarrow \infty} \partial_t \mathbf{v} = - \lim_{t
  \rightarrow \infty} \mathbf{e}$ almost everywhere (a.e.) on the
boundary, and, in turn, implies that $\lim_{t \rightarrow \infty}
\mathbf{v} = 0$ a.e. on the boundary (since $\partial_t \mathbf{v} = -
\mathbf{e} - \mathbf{v}$ and $\lim_{t \rightarrow \infty} \partial_t
\mathbf{v} = - \lim_{t \rightarrow \infty} \mathbf{e}$). From here,
and owing to the Invariance Principle, we have $\lim_{t \rightarrow
  \infty} \partial_t \mathbf{v} = 0 = \lim_{t \rightarrow \infty}
\mathbf{e}$ a.e. on the boundary.  Thus, we have that $\lim_{t
  \rightarrow \infty} M(t) = M^*$.
\end{proof}

\subsubsection{Stage $2$}
\label{sec:stage_2}
Here, the agents in the swarm implement the pseudo-localization
algorithm presented in Section~\ref{sec:2D_pseudoloc}. Since the
agents are distributed across the target spatial domain~$M^*$,
implementing the pseudo-localization algorithm yields the coordinate
transformation~$\Psi^*$ characteristic of the domain~$M^*$. We
therefore have~$\partial_t \Psi^* = 0$, which implies that~$\frac{d
  \Psi^*}{dt} = \partial_t \Psi^* + \nabla (\Psi^*) \mathbf{v} =
\nabla (\Psi^*) \mathbf{v}$, which will be used in Stage~$3$.

\subsubsection{Stage $3$}
In this stage, the boundary agents of the swarm remain stationary and
interior agents converge to the desired density distribution.

Consider the distributed control law, defined as follows for all
time~$t$:
\begin{align}
	\begin{aligned}
          \begin{cases}
            \frac{d \mathbf{v}}{dt} = - \rho \nabla(\rho - p^* \circ
            \Psi^*) + (\mathbf{v}\cdot \nabla) \mathbf{v} + \Delta \mathbf{v} -
            \mathbf{v}, \hspace{0.15in} &\text{for }
            \mathbf{r} \in \mathring{M}^*,\\
            \mathbf{v} = 0, \hspace{0.2in} &\text{on } \partial M^*,
	\end{cases}
	\end{aligned}
	\label{eq:dist_control_law_2D}
\end{align} 
where $\frac{d\mathbf{v}}{dt}$ at $\mathbf{r} \in M$ is the acceleration of the
agent at $\mathbf{r}$, the control input. Using the relation
$\frac{d}{dt} = \partial_t + \mathbf{v}\cdot \nabla$, it follows from
\eqref{eq:dist_control_law_2D} that $\partial_t \mathbf{v} = - \rho
\nabla(\rho - p^* \circ \Psi^*) + \Delta \mathbf{v} - \mathbf{v}$.
\begin{theorem}\longthmtitle{Convergence to the desired density}
\label{thm:stage3_thm}
The solutions~$\rho(t,\cdot)$ to~\eqref{eq:2D_swarm} 
 for the fixed
domain~$M^*$, under the distributed control law
\eqref{eq:dist_control_law_2D} and the well-posedness Assumption \ref{as:wellposed2D}, converge to~the desired density
distribution $\rho^*$ in the~$L^2$-norm as~$t \rightarrow \infty$.
\begin{proof}
We consider an energy functional $E$ given by:
\begin{align*}
  E = \frac{1}{2} \int_{M^*} |\rho - p^* \circ \Psi^*|^2 +
  \frac{1}{2} \int_{M^*} |\mathbf{v}|^2.
\end{align*}
Using Corollary~\ref{lemma:time_der}, to compute the derivative of
energy functionals, we obtain $\dot{E}$ (letting $\bar{\nabla}
= \left( \partial_X, \partial_Y \right)$) as follows:
\small
\begin{align*}
	\begin{aligned}	
          \dot{E}&= \int_{M^*} (\rho - p^* \circ \Psi^*)\left( \frac{d \rho}{dt} - \frac{d (p^* \circ \Psi^*)}{dt} \right) + \frac{1}{2} \int_{M^*} |\rho - p^* \circ \Psi^*|^2 \nabla \cdot \mathbf{v} + \int_{M^*} \mathbf{v} \cdot \partial_t \mathbf{v} \\
          &=  - \int_{M^*} (\rho - p^* \circ \Psi^*) \left( \rho \nabla \cdot \mathbf{v} + \bar{\nabla}p^* \cdot \frac{d\Psi^*}{dt} \right) + \frac{1}{2} \int_{M^*} |\rho - p^* \circ \Psi^*|^2 \nabla \cdot \mathbf{v} + \int_{M^*} \mathbf{v} \cdot \partial_t \mathbf{v} \\
          &= - \frac{1}{2}\int_{M^*} (\rho^2 - (p^* \circ \Psi^*)^2)
          \nabla \cdot \mathbf{v} - \int_{M^*} (\rho - p^* \circ
          \Psi^*) \bar{\nabla}p^* \cdot \frac{d\Psi^*}{dt} +
          \int_{M^*} \mathbf{v} \cdot \partial_t \mathbf{v},
	\end{aligned}
\end{align*} 
\normalsize
where, to
obtain the third equality, we expand the square $ |\rho - p^* \circ
\Psi^*|^2$ in the second integral of the second equality.  Since
$\mathbf{v} = 0$ on $\partial M^*$ and from Section~\ref{sec:stage_2},
we have $\frac{d\Psi^*}{dt} = \nabla (\Psi^*) \mathbf{v}$, we obtain:
\begin{align*}
  \begin{aligned}
    \dot{E}= \frac{1}{2}\int_{M^*} \nabla (\rho^2 - (p^* \circ
    \Psi^*)^2) \cdot \mathbf{v} - \int_{M^*} (\rho - p^* \circ
    \Psi^*) \bar{\nabla} p^* \cdot ( \nabla \Psi^* \mathbf{v}) +
    \int_{M^*} \mathbf{v} \cdot \partial_t \mathbf{v}.
	\end{aligned}
\end{align*}
We have $\bar{\nabla}p^* \nabla \Psi^* = \nabla(p^* \circ
\Psi^*)$, and $\nabla (\rho^2 - (p^* \circ \Psi^*)^2)  = (\rho - p^* \circ \Psi^*) \nabla (\rho + p^* \circ \Psi^*) 
+ (\rho + p^* \circ \Psi^*) \nabla (\rho - p^* \circ \Psi^*)$. Thus, we get:
\begin{align*}
  \begin{aligned}
    \dot{E}= \frac{1}{2}&\int_{M^*} (\rho + p^* \circ \Psi^*) \nabla
    (\rho - p^* \circ \Psi^*) \cdot \mathbf{v} + \frac{1}{2}\int_{M^*}
    (\rho - p^* \circ
    \Psi^*) \nabla (\rho + p^* \circ \Psi^*) \cdot \mathbf{v} \\
    &- \int_{M^*} (\rho - p^* \circ \Psi^*) \nabla (p^* \circ \Psi^*)
    \cdot \mathbf{v} + \int_{M^*} \mathbf{v} \cdot \partial_t
    \mathbf{v}.
	\end{aligned}
\end{align*}
We therefore get:
\begin{align*}
  \begin{aligned}
    \dot{E}&= \int_{M^*} \rho \nabla (\rho - p^* \circ
    \Psi^*) \cdot \mathbf{v} + \int_{M^*} \mathbf{v}
    \cdot \partial_t \mathbf{v} = \int_{M^*} \mathbf{v} \cdot \left(
      \rho \nabla (\rho - p^* \circ \Psi^*) + \partial_t
      \mathbf{v} \right).
	\end{aligned}
\end{align*}
From \eqref{eq:dist_control_law_2D}, we have $\partial_t \mathbf{v} =
- \rho \nabla (\rho - p^* \circ \Psi^*) + \Delta \mathbf{v} - \mathbf{v}$, and we obtain:
\begin{align*}
	\dot{E}= - \int_{M^*} |\mathbf{v}|^2 - \int_{M^*} |\nabla \mathbf{v}_x|^2 - \int_{M^*} |\nabla \mathbf{v}_y|^2 .
\end{align*}
Clearly, $\dot{E}\leq 0$, with~$\rho(t,.), \mathbf{v} \in H^1(M^*)$ and bounded (by Assumption~\ref{as:wellposed2D}).
By Lemma~\ref{lemma:RKCT}, the Rellich-Kondrachov Compactness
theorem, $H^1(M^*)$ is compactly contained in $L^2(M^*)$ (and we also know that the set of all
$(\rho, \mathbf{v})$ satisfying Assumption~\ref{as:wellposed2D} is dense in $L^2(M^*)$). Thus, by the
Invariance Principle, Lemma~\ref{lemma:LaSalle_inv}, we have that the
solution to~\eqref{eq:2D_swarm} converges in the~$L^2$-norm to the largest invariant
subset of~$\dot{E}^{-1} (0)$, which satisfies:
\begin{align}
	\begin{aligned}
          \| |\mathbf{v}| \|_{H^1(M^*)} = 0, \hspace{0.2in}
          \frac{1}{2} \partial_t \| |\mathbf{v}| \|_{L^2(M^*)}^2 =
          \int_{M^*} \mathbf{v} \cdot \partial_t \mathbf{v} = 0.
	\end{aligned}
	\label{eq:stage3_steadystate}
\end{align}
The set $\dot{E}^{-1} ( 0 )$ is characterized by the first equality
above and the second equality is further satisfied by the invariant
subset of $\dot{E}^{-1} ( 0 )$.  We know
from~\eqref{eq:dist_control_law_2D} that
\begin{align}
  \partial_t \mathbf{v} = - \rho \nabla (\rho - p^* \circ \Psi^*) + \Delta \mathbf{v} -
  \mathbf{v},
	\label{eq:stage3_vel_expression}
\end{align}
which substituted in~\eqref{eq:stage3_steadystate} yields~$\int_{M^*}
\rho \mathbf{v} \cdot \nabla(\rho - p^* \circ \Psi^*) = 0$.  Now,
from~\eqref{eq:stage3_vel_expression}, we obtain $\| |\partial_t
\mathbf{v}| \|_{L^2(M^*)}^2 = \int_{M^*} |\rho \nabla(\rho - p^* \circ
\Psi^*)|^2 + \int_{M^*} |\mathbf{v}|^2 + 2 \int_{M^*} \rho
\mathbf{v}\cdot \nabla(\rho - p^* \circ \Psi^*) = \int_{M^*} |\rho
\nabla(\rho - p^* \circ \Psi^*)|^2$; that is, $\| |\partial_t
\mathbf{v}| \|_{L^2(M^*)} = \| |\rho \nabla(\rho - p^* \circ \Psi^*)|
\|_{L^2(M^*)}$.  By multiplying~\eqref{eq:stage3_vel_expression}
by~$\partial_t \mathbf{v}$ on both sides and applying the
Cauchy-Schwarz inequality, we can also get that~$\| |\partial_t
\mathbf{v}| \|_{L^2(M^*)}^2 = - \int_{M^*} \rho \partial_t \mathbf{v}
\cdot \nabla(\rho - p^* \circ \Psi^*) \leq \int_{M^*} |\partial_t
\mathbf{v}| |\rho \nabla(\rho - p^* \circ \Psi^*)| \leq \| |\partial_t
\mathbf{v}| \|_{L^2(M^*)} | |\rho \nabla(\rho - p^* \circ \Psi^*)|
\|_{L^2(M^*)} = \| |\partial_t \mathbf{v}| \|_{L^2(M^*)}^2 $.  Thus,
the Cauchy-Schwarz inequality is in fact an equality, which implies
that~$\partial_t \mathbf{v} = - \rho \nabla(\rho - p^* \circ \Psi^*)$
almost everywhere in~$M^*$, which,
from~\eqref{eq:stage3_vel_expression} implies in turn that~$\mathbf{v}
= 0$ a.e.~in~$M^*$. It thus follows that $\partial_t \mathbf{v} = 0$
and~$\nabla(\rho - p^* \circ \Psi^*) = 0$ a.e in~$M^*$, and therefore
$\rho - p^* \circ \Psi^*$ is constant a.e.~in $M^*$.  Using the
Poincare-Wirtinger inequality, Lemma~\ref{lemma:poincare_wirtinger},
we obtain that $\|(\rho - p^* \circ \Psi^*) - (\rho - p^* \circ
\Psi^*)_{M^*} \| \leq C \| \nabla(\rho - p^* \circ \Psi^*)\| = 0$,
where $ (\rho - p^* \circ \Psi^*)_{M^*} = \frac{1}{|M^*|}\int_{M^*}
(\rho - p^* \circ \Psi^*)$. Since $\int_{M^*} \rho = \int_N p^* =
\int_{M^*} p^* \circ \Psi^* = 1$, we have that $ (\rho - p^* \circ
\Psi^*)_{M^*} = 0$, and therefore $\| \rho - p^* \circ \Psi^*
\|_{L^2(M^*)} = 0$. 
\end{proof}
\end{theorem}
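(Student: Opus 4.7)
My plan is to mirror the Lyapunov-based methodology used in the $1$D convergence theorem, now adapted to the 2D controlled system. I would take as candidate energy functional
\begin{align*}
E(t) = \frac{1}{2}\int_{M^*} |\rho - p^* \circ \Psi^*|^2\, d\mu + \frac{1}{2}\int_{M^*} |\mathbf{v}|^2\, d\mu,
\end{align*}
which is non-negative and vanishes only when $\rho = p^*\circ \Psi^* = \rho^*$ and $\mathbf{v} = 0$. I would differentiate $E$ along trajectories using Corollary~\ref{lemma:time_der}. Because the domain $M^*$ is fixed in this stage and $\mathbf{v}=0$ on $\partial M^*$, the boundary contributions from the Leibniz rule drop out. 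Substituting the continuity equation to get $\tfrac{d\rho}{dt} = -\rho\,\nabla\cdot \mathbf{v}$, and using the Stage~2 identity $\tfrac{d\Psi^*}{dt} = \nabla \Psi^*\,\mathbf{v}$ together with the chain rule $\tfrac{d}{dt}(p^*\circ \Psi^*) = \bar\nabla p^* \cdot \nabla \Psi^*\,\mathbf{v} = \nabla(p^*\circ \Psi^*)\cdot \mathbf{v}$, I would reorganize terms via the Divergence Theorem (Lemma~\ref{le:dt}). The key algebraic step is to expand $\nabla(\rho^2 - (p^*\circ \Psi^*)^2)$ as a symmetric sum and collapse the mixed terms; this should leave a clean coupling $\int_{M^*} \rho\,\mathbf{v}\cdot \nabla(\rho - p^*\circ \Psi^*)$ that exactly cancels against the corresponding piece of $\partial_t \mathbf{v}$ supplied by the control law~\eqref{eq:dist_control_law_2D}. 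What remains should be
\begin{align*}
\dot E = -\int_{M^*} |\mathbf{v}|^2 - \int_{M^*} |\nabla \mathbf{v}_x|^2 - \int_{M^*}|\nabla \mathbf{v}_y|^2 \le 0.
\end{align*}

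With $\dot E \le 0$ and Assumption~\ref{as:wellposed2D} ensuring that trajectories remain bounded in $H^1(M^*)$, Rellich–Kondrachov (Lemma~\ref{lemma:RKCT}) gives precompactness in $L^2(M^*)$, so the LaSalle Invariance Principle (Lemma~\ref{lemma:LaSalle_inv}) applies: solutions approach the largest invariant subset of $\dot E^{-1}(0)$. On this set, $\mathbf{v} = 0$ and $\nabla \mathbf{v} = 0$ in $L^2(M^*)$, and invariance further gives $\partial_t \|\mathbf{v}\|_{L^2}^2 = 0$. Evaluating $\partial_t \mathbf{v}$ on the invariant set via~\eqref{eq:dist_control_law_2D} reduces it to $\partial_t \mathbf{v} = -\rho\,\nabla(\rho - p^*\circ \Psi^*)$; pairing this with itself and with $\partial_t\mathbf{v}$, the cross-term vanishes (because $\mathbf{v}=0$), so $\|\partial_t \mathbf{v}\|_{L^2} = \|\rho\,\nabla(\rho - p^*\circ \Psi^*)\|_{L^2}$. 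A saturation argument in Cauchy–Schwarz (analogous to the one in Stage~1) then forces $\partial_t \mathbf{v} = -\rho\,\nabla(\rho - p^*\circ \Psi^*)$ almost everywhere to be aligned, and since $\rho > 0$ by Assumption~\ref{assumption:rho} extended to the 2D setting, $\nabla(\rho - p^*\circ \Psi^*) = 0$ a.e.\ in $M^*$.

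To finish, I would apply the Poincaré–Wirtinger inequality (Lemma~\ref{lemma:poincare_wirtinger}) to conclude that $\rho - p^*\circ \Psi^*$ equals its mean over $M^*$ a.e. The normalization identities $\int_{M^*}\rho = 1$ and $\int_{M^*} p^*\circ \Psi^* = \int_N p^* = 1$ (the latter coming from the definition $p^* = \rho^* \circ (\Psi^*)^{-1}$ with $\Psi^*$ a diffeomorphism onto $N$) force this mean to be zero, yielding $\rho = p^*\circ \Psi^* = \rho^*$ a.e.\ in $M^*$, i.e., $\rho \to_{L^2} \rho^*$.

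The main obstacle, in my view, is not the Lyapunov computation itself but the last implication chain on the invariant set: cleanly extracting $\nabla(\rho - p^*\circ \Psi^*) = 0$ a.e.\ from the algebraic identity $\partial_t \mathbf{v} = -\rho\,\nabla(\rho - p^*\circ \Psi^*) + \Delta\mathbf{v} - \mathbf{v}$ combined with $\mathbf{v}\equiv 0$ and $\partial_t\|\mathbf{v}\|_{L^2}^2 = 0$. This requires the Cauchy–Schwarz saturation step and careful treatment of the equality case in $L^2$. A second subtlety is verifying the normalization $\int_{M^*} p^*\circ \Psi^* = 1$, which implicitly uses that the construction of $p^*$ via the harmonic diffeomorphism $\Psi^*$ is consistent with unit mass under change of variables; this should be part of the standing setup rather than a new calculation inside the proof.
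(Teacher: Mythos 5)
Your proposal is correct and follows essentially the same route as the paper's proof: the identical energy functional, the same cancellation yielding $\dot E = -\int_{M^*}|\mathbf{v}|^2 - \int_{M^*}|\nabla \mathbf{v}_x|^2 - \int_{M^*}|\nabla \mathbf{v}_y|^2$, Rellich--Kondrachov plus LaSalle, the Cauchy--Schwarz saturation step on the invariant set, and Poincar\'e--Wirtinger with the unit-mass normalization to finish. The two subtleties you flag at the end are precisely the points the paper's argument also leans on.
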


\subsubsection{Robustness of the distributed control law}
The self-organization algorithm in~$2$D has been divided into three
stages, where asymptotic convergence is achieved in each stage (with
exponential convergence in the second stage).  We now present a
robustness result for convergence in Stage~$3$ under incomplete
convergence in the preceding stages.
\begin{lemma}\longthmtitle{Robustness of the control law}
  For every~$\delta > 0$, there exist~$T_1, T_2 < \infty$ such that
  when Stages~$1$ and~$2$ are terminated at~$t_1 > T_1$ and~$t_2 >
  T_2$ respectively, we have that $\lim_{t \rightarrow \infty} \|
  \rho(t,\cdot) - \rho^* \|_{L^2(M(t_1))} < \delta$.
\end{lemma}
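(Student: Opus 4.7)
The plan is to propagate the errors introduced by early termination of Stages~1 and~2 forward into a bound on the asymptotic $L^2$-distance of $\rho$ from $\rho^*$, and to control three sources of error separately: (a) the mismatch between $M(t_1)$ and $M^*$, (b) the mismatch between $\Psi(t_2)$ and the harmonic diffeomorphism $\bar\Psi$ of the actual frozen domain $M(t_1)$, and (c) the Lyapunov decay of Stage~3 with the approximate coordinate map.

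First I would invoke \Cref{thm:stage1_thm} to choose $T_1$ so that $t_1>T_1$ guarantees $d_H(M(t_1),M^*)<\epsilon_1$ in a sense strong enough to make the boundary parametrization~$\xi$ of~\eqref{eq:boundary_maponto_circle} close to the one induced by $M^*$. Once Stage~1 is terminated, the domain $M(t_1)$ is frozen for the remainder of the algorithm, so Stage~2 is effectively run on this frozen domain and, by \Cref{lemma:stage2_lemma}, converges exponentially to the harmonic diffeomorphism $\bar\Psi:M(t_1)\to N$ associated with $M(t_1)$. Hence for any $\epsilon_2>0$ I can pick $T_2$ such that $t_2>T_2$ implies $\|\Psi(t_2)-\bar\Psi\|_{L^\infty(M(t_1))}<\epsilon_2$.

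Next, I would establish continuity of the harmonic diffeomorphism with respect to its domain: as $M(t_1)\to M^*$ in Hausdorff distance with smooth boundaries, standard elliptic regularity for~\eqref{eq:harmonic_diffeo} on varying domains yields $\bar\Psi\to\Psi^*$ in $C^0$ on compact subsets, so that $\|p^*\circ\bar\Psi-\rho^*\|_{L^2(M(t_1)\cap M^*)}$ is bounded by a function of~$\epsilon_1$ tending to zero (via the Lipschitz constant $c_L$ of $p^*$), while the contribution on the symmetric difference $M(t_1)\triangle M^*$ is controlled by $|M(t_1)\triangle M^*|\le C\epsilon_1$. In Stage~3, I would then re-run the Lyapunov computation of \Cref{thm:stage3_thm} on $M(t_1)$ with $\Psi^*$ replaced by the (now time-independent, because frozen) $\Psi(t_2)$; the identities for $\dot E$ proceed verbatim, yielding $\rho\to p^*\circ\Psi(t_2)+c$ in $L^2(M(t_1))$, where $c = |M(t_1)|^{-1}\bigl(1-\int_{M(t_1)}p^*\circ\Psi(t_2)\bigr)$ is the constant arising from Poincar\'e--Wirtinger together with mass conservation $\int_{M(t_1)}\rho=1$. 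Since the integral tends to $\int_{M^*}p^*\circ\Psi^*=1$ as $\epsilon_1,\epsilon_2\to 0$, the constant $c$ is small. A triangle-inequality estimate
\[
\|\rho-\rho^*\|_{L^2(M(t_1))}\le \|\rho-p^*\!\circ\!\Psi(t_2)-c\|+|c|\,|M(t_1)|^{1/2}+c_L\|\Psi(t_2)-\bar\Psi\|_{L^2}+\|p^*\!\circ\!\bar\Psi-\rho^*\|_{L^2(M(t_1))},
\]
combined with the bounds above, completes the proof provided $\epsilon_1,\epsilon_2$ are chosen small enough as a function of $\delta$.

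The main obstacle is the continuity of the harmonic diffeomorphism with respect to its domain in a norm strong enough to control $p^*\circ\bar\Psi$ in $L^2(M(t_1))$: one must show that small Hausdorff perturbations of a smooth boundary produce small perturbations of the solution to the Dirichlet problem~\eqref{eq:harmonic_diffeo}, which is a standard but delicate application of elliptic regularity on varying domains. A secondary technical point is that the Stage~3 Lyapunov argument has to be re-examined with the approximate, not exactly harmonic, coordinate map $\Psi(t_2)$; in particular one must verify that the small mass mismatch $\int_{M(t_1)}p^*\circ\Psi(t_2)-1$ enters only through the Poincar\'e--Wirtinger constant $c$ and can be absorbed into $\delta$ by taking $\epsilon_1,\epsilon_2$ sufficiently small.
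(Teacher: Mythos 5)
Your proposal is correct and follows essentially the same route as the paper: the same termination times chosen via Hausdorff convergence of $M(t)$ to $M^*$ (Stage~1) and exponential heat-flow convergence to the harmonic map of the frozen domain $M(t_1)$ (Stage~2), the same appeal to continuity of the harmonic diffeomorphism with respect to its domain (which the paper simply cites rather than derives from elliptic regularity), and the same Lipschitz-of-$p^*$ plus triangle-inequality decomposition of $\|\rho-\rho^*\|_{L^2(M(t_1))}$. If anything you are more careful than the paper on two points it silently absorbs: the Poincar\'e--Wirtinger constant $c$ arising from the mass mismatch $\int_{M(t_1)}p^*\circ\Psi(t_2)\neq 1$ when Stage~3 is run with the approximate coordinate map, and the contribution of the symmetric difference $M(t_1)\triangle M^*$ where $\rho^*$ and $\Psi^*$ are not defined.
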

\begin{proof}
  In Stage~$1$, it follows from Theorem~\ref{thm:stage1_thm} on the 
  convergence to the desired spatial domain
  that~$\lim_{t \rightarrow \infty} M(t) = M^*$. Then for
  every~$\epsilon_1 > 0$, we have~$T_1 < \infty$, such that~$d_H(M(t),
  M^*) < \epsilon_1$ for all~$t > T_1$, where~$d_H$ is the Hausdorff
  distance between two sets; see~\eqref{eq:defn_Hausdorff}. (Note that
  any appropriate notion of distance can alternatively be used here.)
  Let Stage~$1$ be terminated at~$t_1 > T_1$, which implies that the
  swarm is distributed across the domain~$M(t_1)$.
  In Stage~$2$, it follows from Lemma~\ref{lemma:stage2_lemma} on the
  convergence of the heat flow equation to the harmonic map, that for
  a domain~$M(t_1)$, we have that~$\lim_{t \rightarrow \infty}
  \mathbf{R}(t,\cdot) = \Psi_{M(t_1)}$ pointwise,
  where~$\Psi_{M(t_1)}$ is the harmonic map from~$M(t_1)$ to~$N$ (the
  unit disk).  Then, for every~$\epsilon_2 > 0$, we have a~$T_2 <
  \infty$, such that~$\| \mathbf{R}(t,\cdot) - \Psi_{M(t_1)}
  \|_{\infty} < \epsilon_2$ for all~$t > T_2$.  Let Stage~$2$ be
  terminated at~$t_2 > T_2$, which implies that the map from the
  spatial domain to the disk is~$\mathbf{R}(t_2,\cdot)$.
  In Stage~$3$, it follows from the arguments in the proof of
  Theorem~\ref{thm:stage3_thm} (on the convergence to the desired
  density distribution) that~$\lim_{t \rightarrow \infty}
  \rho(t,\cdot) = p^* \circ \mathbf{R}(t_2,\cdot)$ a.e.~in $M(t_1)$ if
  the map at the end of Stage~$2$ is $\mathbf{R}(t_2, \cdot)$.
  We characterize the error as~$\lim_{t \rightarrow \infty} \| \rho -
  \rho^* \|_{L^2(M(t_1))} = \| p^* \circ \mathbf{R}(t_2,\cdot) - p^*
  \circ \Psi^* \|_{L^2(M(t_1))} = \| p^* \circ \mathbf{R}(t_2,\cdot) -
  p^* \circ \Psi_{M(t_1)} +p^* \circ \Psi_{M(t_1)} - p^* \circ \Psi^*
  \|_{L^2(M(t_1))} \leq \| p^* \circ \mathbf{R}(t_2,\cdot) - p^* \circ
  \Psi_{M(t_1)} \|_{L^2(M(t_1))} + \| p^* \circ \Psi_{M(t_1)} - p^*
  \circ \Psi^* \|_{L^2(M(t_1))}$.  Recall that~$\|
  \mathbf{R}(t_2,\cdot) - \Psi_{M(t_1)} \|_{\infty} < \epsilon_2$, and
  since~$p^*$ is Lipschitz, we can get the bound $\| p^* \circ
  \mathbf{R}(t_2) - p^* \circ \Psi_{M(t_1)} \|_{L^2(M(t_1))} <
  \delta_1 = c \epsilon_2$ (where~$c$ is the Lipschitz constant times
  the area of~$M(t_1)$).  The harmonic map also depends continuously
  on its domain~\cite{AH:94}, which yields the bound $\|
  \Psi_{M(t_1)} - \Psi^* \|_{\infty} < \epsilon_3$, since~$d_H(
  M(t_1), M^* ) < \epsilon_1$. Thus, we get another bound $\| p^*
  \circ \Psi_{M(t_1)} - p^* \circ \Psi^*\|_{L^2(M(t_1))} < \delta_2 =
  c\epsilon_3$, and that~$\| \rho - \rho^* \|_{L^2(M(t_1))} < \delta_1
  + \delta_2 = \delta$.  Therefore, going backwards, for all~$\delta >
  0$, we can find~$T_1$ and~$T_2$ such that the density error is
  bounded by~$\delta$, when the Stages~$1$ and~$2$ are terminated at
  $t_1 > T_1$ and $t_2 > T_2$ respectively.
\end{proof}
\subsection{Discrete implementation}
\label{subsec:discrete_implementation_2D}

In this section, we present consistent schemes for discrete
implementation of the distributed control laws~\eqref{eq:stage1_vel}
and~\eqref{eq:stage3_vel_expression}, where the key aspect is the
computation of spatial gradients (of~$\rho$ in Stage~$1$, and
of~$\rho$, $\Psi^*$ and the components of velocity~$\mathbf{v}$ in
Stage~$3$).  The network graph underlying the swarm is a random
geometric graph, where the nodes are distributed according to the
density distribution over the spatial domain. According to this, every
agent communicates with other agents within a disk of given radius
(say~$r$) determined by the hardware capabilities, which reduces to
the graph having an edge between two nodes if and only if the nodes
are separated by a distance less than~$r$.  We recall the earlier
stated assumption that the agents know the true~$x$-
and~$y$-directions.

\subsubsection{On the computation of~$p^*$}
\label{sec:computingpstar_2D}
We first begin with an approach to compute offline the map~$p^*$ via
interpolation.  Let the desired domain $M^* \in \real^2$ be
discretized into a uniform grid to obtain $M^*_d = \lbrace
\mathbf{r}_1, \ldots, \mathbf{r}_m \rbrace$ (the centers of finite
elements, where $\mathbf{r}_k = (x_k,y_k)$).  The desired
density~$\rho^* : M^* \rightarrow \realpositive$ is known, and we
compute the value of~$\rho^*$ on~$M^*_d$ to get $\rho^*(\mathbf{r}_1,
\ldots, \mathbf{r}_m) = (\rho^*_1, \ldots, \rho^*_m)$.  We also have
$\Psi^*(x,y) = (X^*,Y^*) \in N$, for all $(x,y) \in M^*$.  Now,
computing the integral with respect to the Dirac measure for the
set~$M^*_d$, we obtain $\Psi^*(\mathbf{r}_1, \ldots, \mathbf{r}_m) =
(\Psi^*_1, \ldots, \Psi^*_m)$.  The value of the function~$p^*$ at
any~$(X,Y) \in N$ can be obtained from the relation $p^*(\Psi^*_1,
\ldots, \Psi^*_m) = \rho^*(\mathbf{r}_1, \ldots, \mathbf{r}_m)$ for $k
= 1, \ldots, m$ by an appropriate interpolation.

  \begin{figure}[H]
  \centering
  	\begin{tikzcd}
  		&(\rho^*_1, \ldots, \rho^*_m) = p^*(\Psi^*_1, \ldots, \Psi^*_m) \\
  		(\mathbf{r}_1, \ldots, \mathbf{r}_m) \arrow[ur, mapsto, shift left, end anchor = {[xshift = -1.5ex]}, "\rho^*"] \arrow[r, mapsto, end anchor = {[xshift = 6ex]}, "\Psi^*"] & \hspace{1cm} (\Psi^*_1, \ldots, \Psi^*_m) \arrow[u, Mapsto, shift right = 4ex, "p^*"]
  	\end{tikzcd}
  \caption*{Commutative diagram}
  \end{figure}

  \subsubsection{Discrete control law}
  As stated earlier, for the discrete implementation of the
  distributed control laws~\eqref{eq:stage1_vel}
  and~\eqref{eq:stage3_vel_expression}, the key aspect is the
  computation of spatial gradients (of~$\rho$ in Stage~$1$, and
  of~$\rho$, $\Psi^*$ and the components of velocity~$\mathbf{v}$ in
  Stage~$3$).  In the subsequent sections we present two alternative,
  consistent schemes for computing the spatial gradient (of any smooth
  function, with the above being the ones of interest), one using the
  Jacobian of the harmonic map and the other without it.

\subsubsection*{Computing the Jacobian of the harmonic map}
Let $J(\mathbf{r}) = \nabla \Psi (\mathbf{r})$ be the (non-singular)
Jacobian of the harmonic diffeomorphism $\Psi : M \rightarrow N$.
When the steady-state is reached in the pseudo-localization
algorithm~\eqref{eq:discrete_pseudoloc_2D} (i.e., $X_i(t+1) = X_i(t) =
\psi_1^i$ and $Y_i(t+1) = Y_i(t) = \psi_2^i$), we have,~$\forall\,i
\in \mathcal{S}$:
\begin{align*}
  \begin{aligned}
    \sum_{j \in {\mathcal N}_i} \frac{1}{d_j} (\psi_1^j - \psi_1^i) = 0,
    \qquad 
    \sum_{j \in {\mathcal N}_i} \frac{1}{d_j} (\psi_2^j - \psi_2^i) = 0,
	\end{aligned}
\end{align*}
where~$i$ is the index of the agent located at~$\mathbf{r} \in M$
and~$\mathcal{N}_i$ is the set of agents in a disk-shaped
neighborhood~$B_{\epsilon}(\mathbf{r})$ of area~$\epsilon$ centered
at~$\mathbf{r}$. Rewriting the above, we get, $\forall\,i \in
\mathcal{S}$:
\begin{align}
	\begin{aligned}
          \psi_1^i = \frac{\sum_{j \in {\mathcal N}_i} \frac{1}{d_j}
            \psi_1^j }{\sum_{j \in {\mathcal N}_i} \frac{1}{d_j} },
          \qquad \psi_2^i = \frac{\sum_{j \in {\mathcal N}_i}
            \frac{1}{d_j} \psi_2^j}{\sum_{j \in {\mathcal N}_i}
            \frac{1}{d_j}}.
	\end{aligned}
	\label{eq:SS_pseudoloc}
\end{align}
We assume that the agents have the capability in their hardware to
perturb the disk of communication~$B_{\epsilon}(\mathbf{r})$ (by
moving an antenna, for instance). The Jacobian~$J = \nabla \Psi$
, where~$\Psi = (\psi_1, \psi_2)$ 
is computed through
perturbations to~$\mathcal{N}_i$ (i.e., the
neighborhood~$B_{\epsilon}(\mathbf{r})$) and using consistent discrete
approximations:
\begin{align*}
	\begin{aligned}
          \partial_x \psi_1 \approx \frac{\psi_1 (\mathbf{r} + \delta x \mathbf{e}_1) -
            \psi_1 (\mathbf{r})}{\delta x}, \qquad
          \partial_y \psi_1 \approx \frac{\psi_1(\mathbf{r} + \delta y \mathbf{e}_2) -
            \psi_1 (\mathbf{r})}{\delta y},
	\end{aligned}
      \end{align*}  and
      similarly for~$\psi_2$. Now, $\psi_1(\mathbf{r}+\delta x \mathbf{e}_1)$ is computed as
      in~\eqref{eq:SS_pseudoloc} for~$\mathcal{N}^{\delta x}_i$, the
      set of agents in~$B_{\epsilon}(\mathbf{r}+\delta x \mathbf{e}_1)$ and
      $\psi_1(\mathbf{r}+\delta y \mathbf{e}_2)$
      from~$B_{\epsilon}(\mathbf{r}+\delta y \mathbf{e}_2)$.

      \subsubsection*{Computing the spatial gradient of a smooth
        function using the Jacobian of $\Psi$}
      Let $\nabla = \left( \partial_x, \partial_y \right)$ and
      $\bar{\nabla} = \left( \partial_{\psi_1}, \partial_{\psi_2}
      \right)$, where $\Psi = (\psi_1, \psi_2)$. We have $\partial_x =
      (\partial_x \psi_1) \partial_{\psi_1} + (\partial_x
      \psi_2) \partial_{\psi_2}$ and $\partial_y = (\partial_y
      \psi_1) \partial_{\psi_1} + (\partial_y
      \psi_2) \partial_{\psi_2}$. Therefore, $\nabla = J^{\top}
      \bar{\nabla}$. For a smooth function $f:M \rightarrow \real$, we
      have, $\nabla f= J^{\top} \bar{\nabla} f$, and the agents can
      numerically compute~$\bar{\nabla}$ by:
\begin{align*}
	\begin{aligned}
          \left( \frac{\partial f}{\partial \psi_1} \right)_i \approx
          \frac{1}{|\mathcal{N}_i|} \sum_{j \in \mathcal{N}_i}
          \frac{f_j - f_i}{\psi_1^j - \psi_1^i}, \qquad 
          \left( \frac{\partial f}{\partial \psi_2} \right)_i \approx
          \frac{1}{|\mathcal{N}_i|} \sum_{j \in \mathcal{N}_i}
          \frac{f_j - f_i}{\psi_2^j - \psi_2^i},
	\end{aligned}
\end{align*}
where~$i$ is the index of the agent located at~$\mathbf{r} \in M$
and~$\mathcal{N}_i$ is the set of agents in a ball
$B_\epsilon(\mathbf{r})$.

\subsubsection*{Computing the spatial gradient of a smooth function
  without the Jacobian of~$\Psi$}
In the absence of a Jacobian estimate, we use the following alternative
method for computing an approximate spatial gradient
estimate of a smooth function. This is
used in Stage~$1$ of the self-organization process.

Let $\bar{f}(\mathbf{r})$ be the mean value of~$f$ over a ball
$B_\epsilon(\mathbf{r})$:
\small
\begin{align*}
  \bar{f}(\mathbf{r}) = \frac{1}{\epsilon}
  \int_{B_{\epsilon}(\mathbf{r})} f d\mu \approx \frac{1}{|\mathcal{N}_i|}
  \sum_{j \in \mathcal{N}_i} f_j.
\end{align*}
\normalsize
We have: 
\small
\begin{align*}
  \frac{1}{\epsilon} \frac{\partial \bar{f}}{\partial x} &\approx
  \frac{1}{\epsilon} \frac{\bar{f}(\mathbf{r}+\delta x \mathbf{e}_1) -
    \bar{f}(x)}{\delta x} = \frac{1}{\epsilon}
  \frac{\int_{B_{\epsilon}(\mathbf{r}+\delta x \mathbf{e}_1)}
    f d\mu - \int_{B_{\epsilon}(\mathbf{r})} f d\mu}{\delta x} \\
  &= \frac{1}{\epsilon}  \int_{B_{\epsilon}(\mathbf{r})} \frac{(f(\mathbf{r}+\delta x \mathbf{e}_1) - f(\mathbf{r})) }{\delta x}d\mu 
  \approx \frac{1}{\epsilon}  \int_{B_{\epsilon}(\mathbf{r})}  \frac{\partial f}{\partial x} d\mu 
  = \overline{\left( \frac{\partial f}{\partial x} \right)}.
\end{align*}
\normalsize
Similarly,
\small
\begin{align*}
  \frac{1}{\epsilon} \frac{\partial \bar{f}}{\partial y} \approx
  \frac{1}{\epsilon} \frac{\bar{f}(\mathbf{r}+\delta y \mathbf{e}_2) -
    \bar{f}(x)}{\delta y} \approx \overline{\left( \frac{\partial
        f}{\partial y} \right)}.
\end{align*}
\normalsize
In all, for any scalar function $f$, each agent can use the approximation:
\small
\begin{align}\label{eq:gradientapprox}
 (\nabla f)_i &\approx \left( \overline{\left( \frac{\partial f}{\partial x}
    \right)}, \overline{\left( \frac{\partial f}{\partial y} \right)}
\right) = \frac{1}{\epsilon} \left( \frac{\partial \bar{f}}{\partial
    x}, \frac{\partial \bar{f}}{\partial y} \right),
\end{align}
\normalsize
to estimate of the gradient $\nabla f$. 

\subsubsection{On the convergence of the discrete system}
We have noted earlier that the pseudo-localization
algorithm~\eqref{eq:discrete_pseudoloc_2D} satisfies the consistency
condition in that as~$N \rightarrow \infty$,
Equation~\eqref{eq:discrete_pseudoloc_2D} converges to the
PDE~\eqref{eq:heat_flow_dyn_swarm}.  The pseudo-localization algorithm
is also essentially a weighted Laplacian-based distributed algorithm
that is stable. Thus, by the Lax Equivalence theorem~\cite{GDS:85},
the solution of~\eqref{eq:discrete_pseudoloc_2D} converges to the
solution of~\eqref{eq:heat_flow_dyn_swarm} as $N \rightarrow \infty$.
However, for the distributed control laws in Stages~$1$-$3$, we are
only able to provide consistent discretization schemes. The dynamics
of the swarm~\eqref{eq:2D_swarm} with the control
laws~\eqref{eq:stage1_vel} and~\eqref{eq:dist_control_law_2D} are
nonlinear for which is no equivalent convergence theorem. Further
analysis to determine convergence is required, which falls out the
scope of this present work.

\begin{algorithm}
\footnotesize
\label{alg:2D_implementation}
\begin{algorithmic}[1]
\caption{Self-organization algorithm for 2D environments}
	\State \textbf{Input:} $M^*$, $\rho^*$ and $k_1$, $k_2$, $K$
        (number of iterations for each stage), $\Delta t$ (time step)
        \State \textbf{Requires:}
	\State $\quad$Offline computation of~$p^*$ similar to the outline in
        Section~\ref{sec:computingpstar_1D}
        \State $\quad$Boundary agents are aware of being at boundary
        or interior of domain,
            can\\
            $\qquad$communicate with others along the boundary,
            can approximate the normal\\ 
            $\qquad$to the boundary, and can measure density of boundary agents, 
        \State $\quad$Agents have knowledge of a common orientation of a reference frame
    \State \textbf{Initialize:} $\mathbf{r}_i$ (Agent positions), $\mathbf{v}_i = 0$ (Agent velocities) 
	\State Boundary agents localize as outlined in Section~\ref{sec:boundary_localization}
	\State \textbf{Stage~$1$:}
		\For{$k:=1$ to $k_1$} 
			\If{agent $i$ is at the interior of domain}	
				\State compute
                                $\mathbf{v}_i(k) = -\frac{(\nabla
                                  \rho)_i}{\rho_i}(k)$ 
                                from~\eqref{eq:stage1_vel} 
                                \State move $\mathbf{r}_i(k+1) =
                                \mathbf{r}_i(k) + \mathbf{v}_i(k) \Delta t$
			\ElsIf {agent $i$ is at the boundary of domain}
				\State compute $\mathbf{v}_i (k+1) = \mathbf{v}_i (k)
                                -(\mathbf{r}_i(k) - \mathbf{r}_i^*(k) + \mathbf{v}_i (k)) \Delta t$
                                from~\eqref{eq:stage1_vel}, 
                                and move $\mathbf{r}_i(k+1) = \mathbf{r}_i(k) + \mathbf{v}_i(k) \Delta t$
			\EndIf
		\EndFor
	\State \textbf{End Stage~$1$}
	\State \textbf{Stage~$2$:}
	\State Boundary agents map themselves onto unit circle 
        according to~\eqref{eq:boundary_maponto_circle}
	\For{$k:=1$ to $k_2$} 	
			\For{agent $i$ in the interior}
                        \State compute $X_i(k+1)$, $Y_i(k+1)$
                        according to~\eqref{eq:discrete_pseudoloc_2D}
                        \EndFor
	\EndFor
	\State \textbf{Stage~$3$:} \For{$k:=1$ to $K$} \For{agent $i$
          in the interior} 
        \State compute $\mathbf{v}_i(k+1) = \mathbf{v}_i(k) + ( - \rho_i(k) (\nabla (\rho - p^* \circ \Psi^*))_i(k) +
        (\mathbf{v}_i(k) \cdot \nabla) \mathbf{v}_i(k) - \mathbf{v}_i(k) ) \Delta t $ from~\eqref{eq:dist_control_law_2D}
        , with $(\nabla (\rho - p^*  \circ \Psi^*))_i(k)$  as  in~\eqref{eq:gradientapprox}
        \State update $\mathbf{r}_i(k+1) = \mathbf{r}_i(k) + \mathbf{v}_i(k) \Delta t$
                       \EndFor
	\EndFor
\end{algorithmic}
\end{algorithm}
\section{Numerical simulations}
\label{sec:simulation}
In this section, we present numerical simulations of swarm
self-organization, that is, of the control laws presented in
Sections~\ref{subsec:distributed control} and of
Section~\ref{sec:dist_control_2D}.
\subsection{Self-organization in one dimension}
In the simulation of the~$1$D case, we consider a swarm of $N = 10000$
agents, the desired density distribution is given by $\rho^*(x) =
a\sin(x) + b$, where $a = 1 - \frac{\pi}{2N}$ and $b = \frac{1}{N}$,
$x \in \left[0, \frac{\pi}{2} \right]$. We use a kernel-based method
to approximate the continuous density function, which is given by:
\begin{align*}
  \rho(t, \mathbf{r}) = \sum_{i \in {\Sc}} K \left( \frac{\|
      \mathbf{r} - \mathbf{r}_i(t) \|}{d} \right), \hspace{0.2in}
	K(x)= 
        \begin{cases} 
          \frac{c_d}{d^n},& \text{for } 0 \leq x<1 , \\
          0,& \text{for } x \geq 1 ,
        \end{cases}
\end{align*} 
is a flat kernel and $c_d \in \realpositive$ is a constant
\cite{YC:95}. We discretize the spatial domain with~$\Delta x = 0.001$
units, and use an adaptive time step. The self-organization begins
from an arbitrary initial density
distribution. Figure~\ref{fig:simulation_1} shows the initial density
distribution, an intermediate distribution and the final
distribution. We observe that there is convergence to the desired
density distribution, even with noisy density measurements.
\begin{figure}[!h]
 	\begin{minipage}[c]{0.6\textwidth}
	\includegraphics[width=1\linewidth]{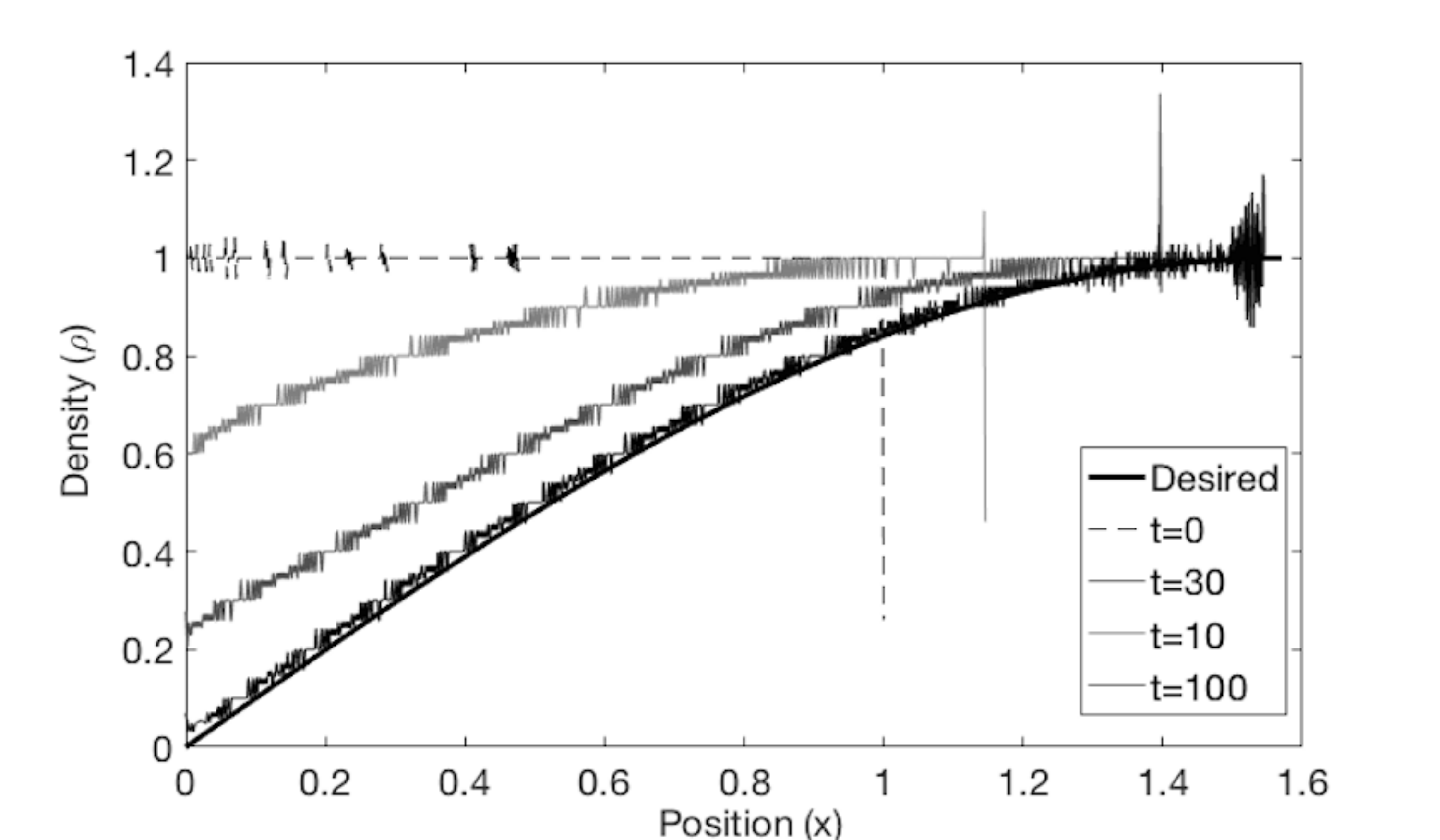}
	\end{minipage}
	\begin{minipage}[c]{0.3\textwidth}
	\captionsetup{justification=centering}
	\caption{Density $\rho(x)$ plotted against position $x$ at
          different instants of time.}
          \label{fig:simulation_1}
      \end{minipage}
\end{figure}
\subsection{Self-organization in two dimensions}
In the simulation of the~$2$D case, we first present in
Figure~\ref{fig:stage1fig} the evolution of the boundary of the swarm
in Stage~$1$, where the swarm converges to the target spatial
domain~$M^*$ from an initial spatial domain. The
target spatial domain, a circle of radius~$0.5$ units, given by $M^* =
\lbrace (x,y) \in \real^2 \,|\, (x-0.6)^2 + y^2 \leq 0.25 \rbrace$, with
the desired density distribution~$\rho^*$ given by $\rho^*(x,y) =
\frac{1}{\left( (x-0.4)^2 + y^2 \right)^{0.3}}$.
\begin{figure}[!h]
	\begin{center}
	\includegraphics[width=1\textwidth]{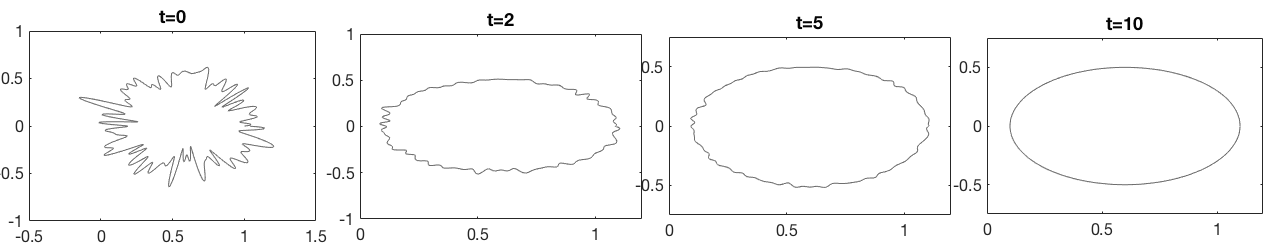}
	\captionsetup{justification=centering}
	\caption{Evolution of the swarm boundary in Stage~$1$.}
	\label{fig:stage1fig}
	\end{center}
\end{figure}

We present in Figures~\ref{fig:stage2X} and~\ref{fig:stage2Y} 
the result of implementation of the pseudo-localization algorithm
with the steady state distributions of~$\Psi^* = (\psi^*_1, \psi^*_2)$
respectively. We note that the steady state distribution~$\Psi^*$
as a function of the spatial coordinates $(x,y)$ in this case is linear.
\begin{figure}[!h]
\centering
\begin{minipage}{0.5\textwidth}
  \centering
  \includegraphics[width=1\linewidth]{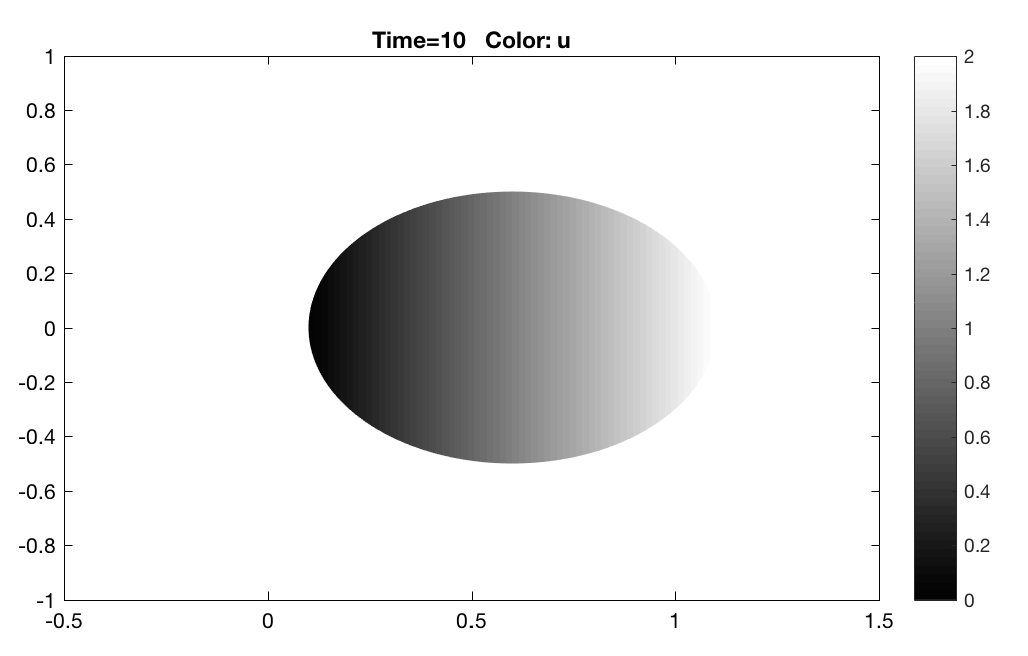}
  \captionof{figure}{Steady-state distribution of~$\psi^*_1$.}
  \label{fig:stage2X}
\end{minipage}%
\begin{minipage}{0.5\textwidth}
  \centering
  \includegraphics[width=1\linewidth]{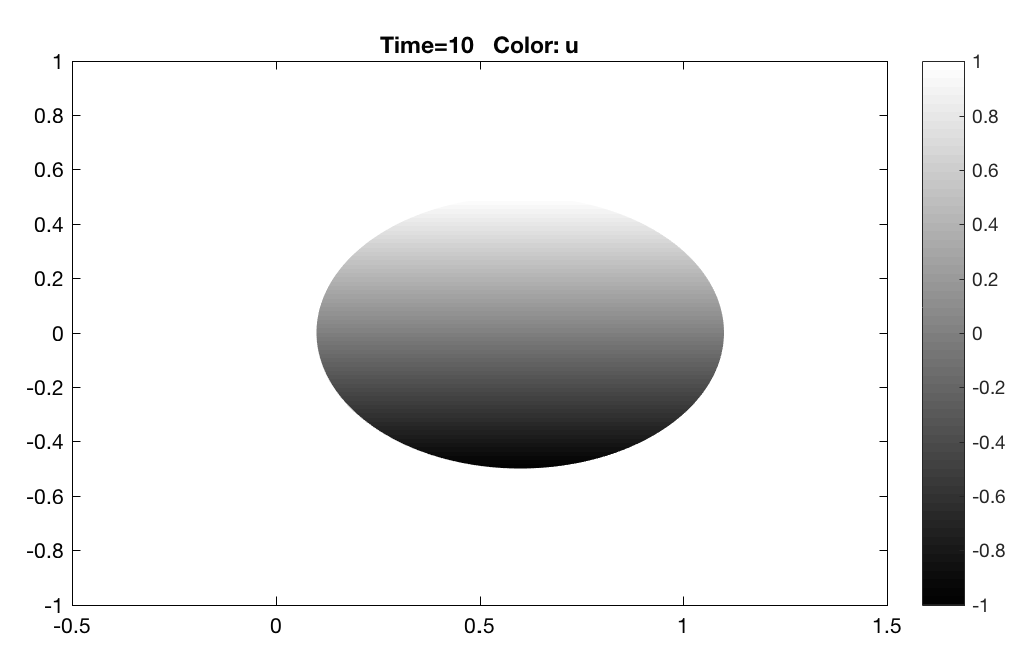}
  \captionof{figure}{Steady-state distribution of~$\psi^*_2$.}
  \label{fig:stage2Y}
\end{minipage}
\end{figure}

Next, we focus on Stage~$3$ of the self-organization process, where
the agents already distributed over the target spatial domain,
converge to the desired density distribution. The initial density distribution
of the swarm is uniform, and the distributed control law of Stage~$3$
in Section~\ref{sec:dist_control_2D}
is implemented.
Figure~\ref{fig:snapshots} shows the density distribution at a few
intermediate time instants of implementation and
figure~\ref{fig:density_error_plor} shows the spatial density error
plot, where~$e(\rho) = \int_{M^*} |\rho - \rho^*|^2$ is the spatial
density error. The results show convergence as desired. 
%
%
\begin{figure}[!h]
\begin{minipage}{0.5\textwidth}
\centering
	\includegraphics[width=1\linewidth]{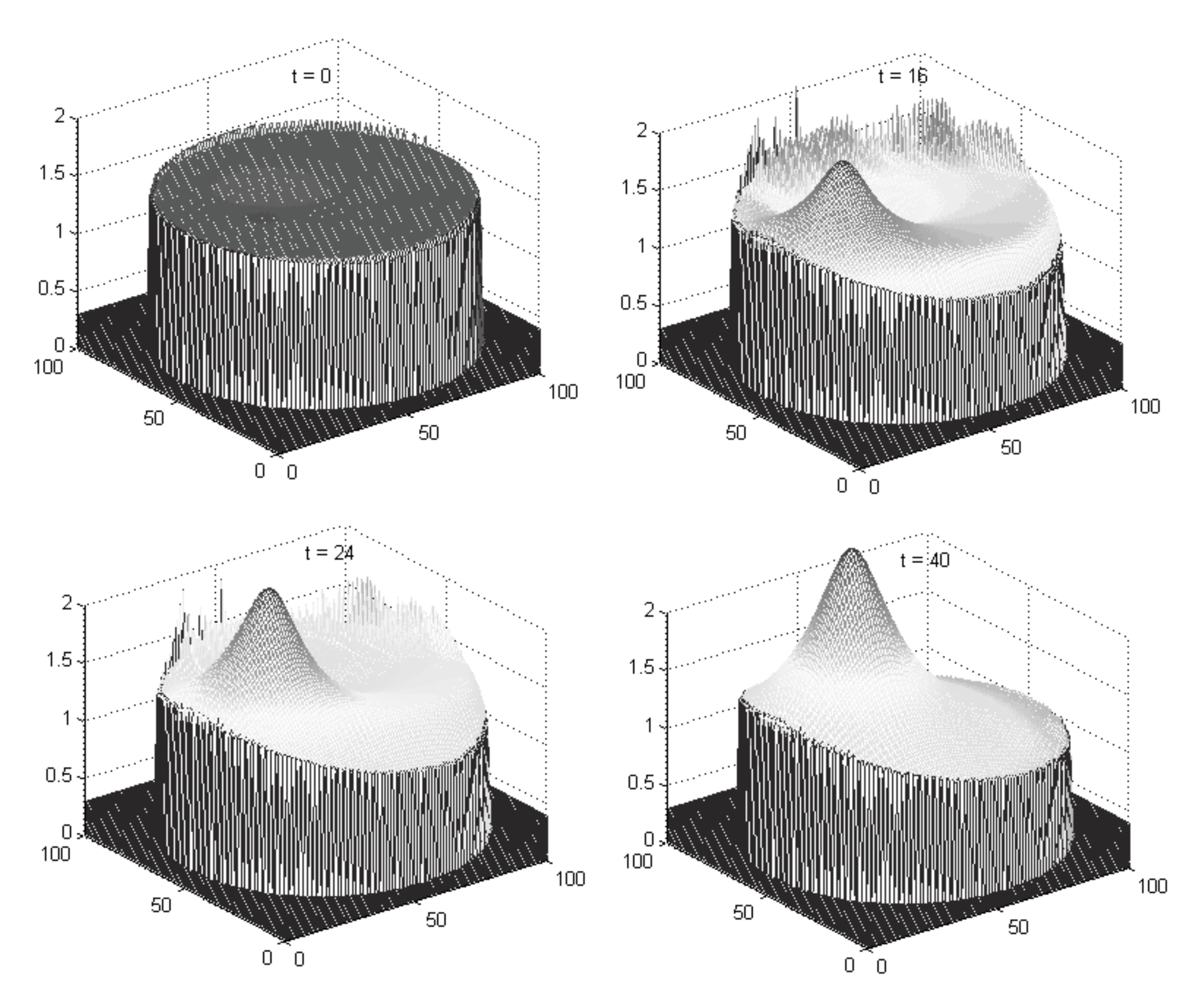}
	\captionsetup{justification=centering}
	\caption{Evolution of density distribution in Stage~$3$.}
	\label{fig:snapshots}
\end{minipage}
\begin{minipage}{0.45\textwidth}
\centering
	\includegraphics[width=1\linewidth]{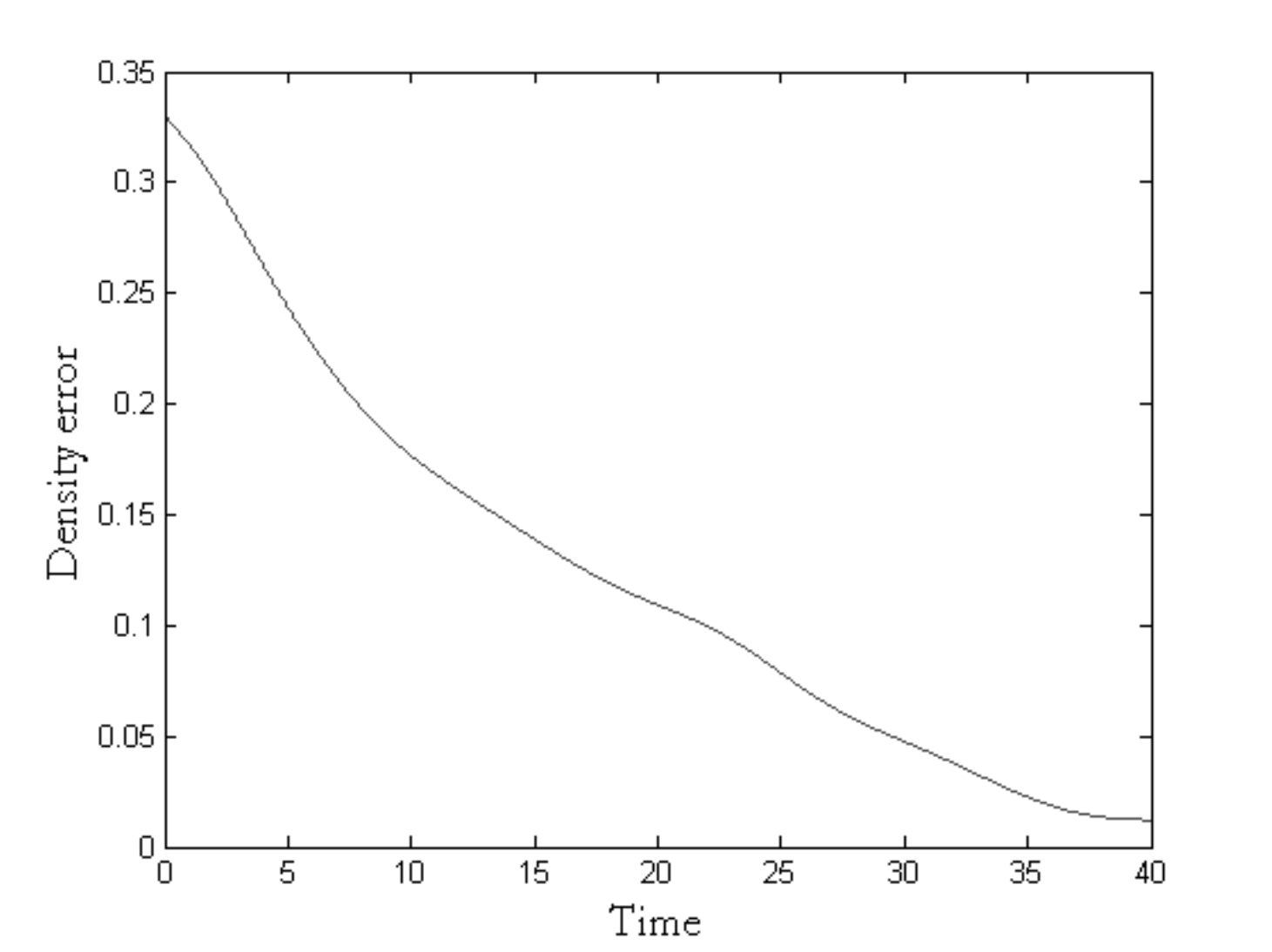}
	\captionsetup{justification=centering}
	\caption{Spatial density error $e(\rho) = \int_{M^*} |\rho -
          \rho^*|^2$ vs time,}
	\label{fig:density_error_plor}
\end{minipage}
\end{figure}

\section{Conclusions}
\label{sec:conclusions}
In this paper, we considered the problem of self-organization in
multi-agent swarms, in one and two dimensions, respectively. The
primary contribution of this paper is the analysis and design of
position and index-free distributed control laws for swarm
self-organization for a large class of configurations. This was
accomplished through the introduction of a distributed
pseudo-localization algorithm that the agents implement to find their
position identifiers, which then use in their control laws.  The
validation of the results for more general non-simply connected
domains will be considered in the future.  An extension to this work
will involve the characterization of constraints on the local density
function to capture finite robot sizes and collision avoidance
constraints, as well as accounting for possible non-holonomic
constraints on the motion of the robots.

\section*{Acknowledgments}
The authors would like to thank Prof. Lei Ni at the UC San Diego
Mathematics Department and the reviewers of this manuscript for their
valuable inputs.
\bibliographystyle{siamplain}
\bibliography{alias,SMD-add,SM,JC,FB}

\end{document}